\newcounter{theoremintro}
\newtheorem{thmintro}[theoremintro]{Theorem}
\newtheorem{cnjintro}[theoremintro]{Conjecture}
\newtheorem{theorem}{Theorem}[section]
\newtheorem*{theorem*}{Theorem}
\newtheorem*{claim*}{Claim}
\newtheorem{corollary}[theorem]{Corollary}
\newtheorem{example}[theorem]{Example}
\newtheorem{notation}[theorem]{Notation}
\newtheorem{proposition}[theorem]{Proposition}
\theoremstyle{definition}
\newtheorem{remark}[theorem]{Remark}
\newtheorem{lemma}[theorem]{Lemma}
\newtheorem{definition}[theorem]{Definition}
\begin{document}
\def\cprime{$'$}

\title[Actions of rigid groups on UHF-algebras]{Actions of rigid groups on
UHF-algebras}
\author[Eusebio Gardella]{Eusebio Gardella}
\address{Eusebio Gardella\\
Westf\"{a}lische Wilhelms-Universit\"{a}t M\"{u}nster, Fachbereich
Mathematik, Einsteinstrasse 62, 48149 M\"{u}nster, Germany}
\email{gardella@uni-muenster.de}
\urladdr{https://wwwmath.uni-muenster.de/u/gardella/}
\author{Martino Lupini}
\address{Mathematics Department\\
California Institute of Technology\\
1200 E. California Blvd\\
MC 253-37\\
Pasadena, CA 91125}
\date{\today }
\email{lupini@caltech.edu}
\urladdr{http://www.lupini.org/}
\subjclass[2000]{Primary 46L55, 54H05; Secondary 03E15, 37A55 }
\thanks{The first-named author was partially funded by SFB 878 \emph{Groups,
Geometry and Actions}, and by a postdoctoral fellowship from the Humboldt
Foundation. The second-named author was partially supported by the NSF Grant
DMS-1600186. This work was initiated during a visit of the authors at the
Mathematisches Forschungsinstitut Oberwolfach in August 2016 supported by an
Oberwolfach Leibnitz Fellowship. The authors gratefully acknowledge the
hospitality of the Institute. }
\keywords{Property (T), 1-cohomology, profinite group, Rokhlin property,
model action, Borel complexity, conjugacy, cocycle conjugacy, complete
analytic set}
\dedicatory{}

\begin{abstract}
Let $\Lambda$ be a countably infinite property (T) group, and let $D$ be
UHF-algebra of infinite type. We prove that there exists a continuum of
pairwise non (weakly) cocycle conjugate, strongly outer actions of $\Lambda$
on $D$. The proof consists in assigning, to any second countable abelian pro-%
$p$ group $G$, a strongly outer action of $\Lambda$ on $D$ whose (weak)
cocycle conjugacy class completely remembers the group $G$. The group $G$ is
reconstructed from the action through its (weak) 1-cohomology set endowed
with a canonical pairing function.

Our construction also shows the following stronger statement: the relations
of conjugacy, cocycle conjugacy, and weak cocycle conjugacy of strongly
outer actions of $\Lambda$ on $D$ are complete analytic sets, and in
particular not Borel. The same conclusions hold more generally when $\Lambda$
is only assumed to contain an infinite subgroup with relative property (T),
and for actions on (not necessarily simple) separable, nuclear,
UHF-absorbing, self-absorbing C*-algebras with at least one trace.

Finally, we use the techniques of this paper to construct outer actions on $%
R $ with prescribed cohomology. Precisely, for every infinite property (T)
group $\Lambda$, and for every countable abelian group $\Gamma$, we
construct an outer action of $\Lambda$ on $R$ whose 1-cohomology is
isomorphic to $\Gamma$.
\end{abstract}

\maketitle

\renewcommand*{\thetheoremintro}{\Alph{theoremintro}}

\section{Introduction}

Classification of group actions is a fundamental problem in operator
algebras, and positive results are both scarce and useful. The subject is
far more developed on the von Neumann algebra side, and it was started with
Connes' classification of periodic automorphisms on the hyperfinite II$_{1}$
factor $R$; see \cite{connes_periodic_1977}. Further generalizations to
arbitrary automorphisms \cite{connes_outer_1975} and finite group actions 
\cite{jones_actions_1980} quickly followed, and these advances culminated in
Ocneanu's work on amenable group actions on $R$ \cite{ocneanu_actions_1985}.
A consequence of his results is that for any amenable group $\Lambda$, any
two outer actions of $\Lambda$ on $R$ are cocycle conjugate. A converse to
Ocneanu's theorem was proved by Jones in \cite{jones_converse_1983}, and
this result was considerably strengthened in a recent work by Brothier and
Vaes in \cite{brothier_families_2015}, building on \cite{popa_some_2006}. We
summarize these results in the following rather strong dichotomy for outer
actions on $R$:

\begin{theorem*}
(Connes, Jones, Ocneanu, Brothier-Vaes). Let $\Lambda$ be a countable group.

\begin{enumerate}
\item If $\Lambda$ is amenable, then any two outer actions of $\Lambda$ on $%
R $ are cocycle conjugate.

\item If $\Lambda$ is not amenable, then there exist uncountably many
non-cocycle conjugate outer actions of $\Lambda$ on $R$. In fact, the
relation of cocycle conjugacy of such actions is complete analytic.
\end{enumerate}
\end{theorem*}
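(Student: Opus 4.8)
The plan is to treat the two cases separately, since they rest on fundamentally opposite mechanisms: the \emph{softness} of amenable actions versus the \emph{rigidity} of non-amenable ones.

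For part (1), I would reconstruct Ocneanu's classification of amenable group actions on $R$, which extends the earlier cases settled by Connes (for $\mathbb{Z}$) and Jones (for finite groups). The goal is to show that any outer action of the amenable group $\Lambda$ on $R$ is cocycle conjugate to a fixed model action, so that there is a single cocycle conjugacy class. First I would exploit the Følner sequences of $\Lambda$ to produce approximate Rokhlin towers inside the central sequence algebra $R_\omega = R^\omega \cap R'$. Next I would establish the decisive approximate cohomology vanishing statement: every approximately inner cocycle for the action can be untwisted by a unitary perturbation, with estimates governed by the Følner condition. Combining these two ingredients, a one-sided intertwining argument in the spirit of Ocneanu, carried out in $R^\omega$, produces unitaries implementing a cocycle conjugacy between an arbitrary outer action and the model. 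Amenability of $\Lambda$ enters decisively, and only, in the cohomology vanishing step.

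For part (2), the uncountability of cocycle conjugacy classes is the original converse of Jones, while the sharper complete-analyticity is the contribution of Brothier and Vaes, resting on Popa's deformation/rigidity theory. I would first isolate a rigid piece of $\Lambda$ — when $\Lambda$ is non-amenable it contains a subgroup exhibiting spectral gap or relative property (T) — and construct a Borel-parametrized family of (generalized) Bernoulli-type actions of $\Lambda$ on $R$ whose building data encode countable combinatorial structures. Popa's cocycle superrigidity then forces any cocycle conjugacy between two such actions to come from an isomorphism of the underlying data, so the assignment is \emph{faithful}: the cocycle conjugacy class remembers the structure up to isomorphism. This alone yields a continuum of pairwise non-cocycle-conjugate actions.

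To upgrade this to the complete-analytic conclusion, I would present the family as a Borel map from a Polish space $Y$ carrying a known complete analytic set — for instance the set of pairs of isomorphic countable structures, or equivalently the orbit relation of a universal analytic action — into the Polish space of actions, arranged so that two parameters are sent to cocycle-conjugate actions precisely when the corresponding structures are isomorphic. Faithfulness from the previous paragraph supplies one direction of this reduction, and a direct construction supplies the other; since the source relation is complete analytic and the reduction is Borel, cocycle conjugacy is complete analytic, hence not Borel. The main obstacle is exactly this last step: one must verify that the rigidity is strong enough to make the reduction a genuine \emph{equivalence}, with no spurious cocycle conjugacies collapsing non-isomorphic structures. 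This is where the full force of Popa's superrigidity, rather than merely a distinguishing invariant, becomes indispensable.
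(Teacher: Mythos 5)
This statement is not proved in the paper at all: it is a background survey theorem, attributed to Connes, Jones, Ocneanu and Brothier--Vaes, and the paper's own machinery (Sections 3--5) only ever recovers a \emph{special case} of part (2), namely for groups containing an infinite subgroup with relative property (T). So there is no internal proof to compare against; your proposal has to be measured against the cited literature and against what the paper's own techniques can actually deliver. Your sketch of part (1) is the correct shape of Ocneanu's argument (Rokhlin towers from F\o lner sets, vanishing of approximate $1$-cohomology, an intertwining argument in the ultrapower), so no objection there beyond its brevity.

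Part (2), however, contains a genuine gap, and it sits exactly at the step you flag as routine: ``when $\Lambda$ is non-amenable it contains a subgroup exhibiting spectral gap or relative property (T).'' This is false. The free group $F_2$ is non-amenable but has the Haagerup property, and a Haagerup group contains no infinite subgroup with relative property (T); more generally, any non-amenable Haagerup group is a counterexample. Nor does the ``spectral gap'' fallback rescue the argument: Popa's spectral gap rigidity for Bernoulli-type actions requires a product structure (one factor non-amenable, deformation along the other), and cocycle superrigidity of Bernoulli actions genuinely \emph{fails} for free groups (for instance by the positive first $L^2$-Betti number obstruction of Peterson--Sinclair, or by treeability of the orbit equivalence relation). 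Consequently, the construction-plus-superrigidity scheme you describe proves part (2) only for groups with an infinite relative property (T) subgroup --- which is precisely the hypothesis of Theorem A of this paper, and the authors are explicit that their results imply Brothier--Vaes only for that subclass. Handling \emph{all} non-amenable $\Lambda$ is the actual content of Brothier--Vaes, and it requires relocating the rigidity into the building blocks of the action (their construction uses crossed products of $R$ by Bernoulli shifts of amenable torsion-free groups, with rigid data placed in the base) rather than extracting rigidity from $\Lambda$ itself, so that non-amenability alone suffices. Your final paragraph --- a Borel reduction from a complete analytic isomorphism relation, with superrigidity supplying injectivity of the reduction --- is the right descriptive-set-theoretic frame (it is the same frame as Section 5 of the paper), but it rests on the broken rigidity step. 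A minor but related slip: the uncountable family for arbitrary non-amenable groups is due to Brothier--Vaes, not to Jones, whose converse produces only (at least) two non-cocycle-conjugate outer actions; this matters because it suggests the uncountability half of part (2) is older and easier than it actually is.
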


Ocneanu's work served as a motivation for exploring analogs of the
uniqueness statement in (1) in the context of C*-algebras. The first issue
is to find the appropriate C*-analog of $R$. UHF-algebras of infinite type
have historically played this role, as they can be regarded as ``strong"
C*-analogs of $R$. A ``weak" analog is the Jiang-Su algebra $\mathcal{Z}$
(see \cite{jiang_simple_1999}), which has also been studied in relation to
uniqueness of actions of certain amenable groups \cite%
{kishimoto_rohlin_1995,matui_actions_2011,szabo_strongly_2017}. This work
focuses mostly on UHF-algebras. Even though the existence of plenty of
projections makes their study easier, classification results for actions are
relatively difficult to obtain because of K-theoretical restrictions; see 
\cite{izumi_finite_2004-1}.

In \cite{bratteli_rohlin_1995}, Bratteli, Evans and Kishimoto studied a
family of outer actions of $\mathbb{Z}$ on the CAR algebra. It follows from
their results that no analog of Ocneanu's result can hold for outer actions.
However, they provided evidence for the fact that a uniqueness result may
hold if one assumes that not only the action is outer, but also its
extension to the weak closure in the GNS representation is outer (this is
called \emph{strong outerness}).

Recall that a unital C*-algebra $D$ is said to be \emph{strongly
self-absorbing} if it is infinite dimensional and there is an isomorphism $%
\varphi\colon D\to D\otimes_{\mathrm{min}}D$ which is approximately
unitarily equivalent to the first tensor factor embedding. The only known
examples of stably finite strongly self-absorbing C*-algebras are the
UHF-algebras of infinite type, and the Jiang-Su algebra $\mathcal{Z}$, and
in fact it is conjectured that the list is complete.

Several results in the literature, which are reviewed below, suggest that
the following may be true (part~(1) below has also been independently
conjectured by Szabo in \cite{szabo_strongly_2017}):

\begin{cnjintro}
\label{cnjintro} Let $D$ be a stably finite strongly self-absorbing
C*-algebra and let $\Lambda$ be a torsion-free countable group.

\begin{enumerate}
\item If $\Lambda$ is amenable, then any two strongly outer actions of $%
\Lambda$ on $D$ are cocycle conjugate.

\item If $\Lambda$ is not amenable, then there exist uncountably many
non-cocycle conjugate strongly outer actions of $\Lambda$ on $D$. Even more,
the relation of cocycle conjugacy of such actions is complete analytic.
\end{enumerate}
\end{cnjintro}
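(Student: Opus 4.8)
The plan is to prove the two parts by completely different mechanisms, mirroring the amenable/non-amenable dichotomy that governs the von Neumann algebra theorem of Connes, Jones, Ocneanu, and Brothier--Vaes quoted above.

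For part (1) I would run an equivariant Evans--Kishimoto intertwining argument. The first and essential step is to extract a Rokhlin-type property from strong outerness: using an Ornstein--Weiss tiling of the amenable group $\Lambda$, I would show that the action induced on the central sequence algebra $D_\omega \cap D'$ is sufficiently free that one can produce approximately $\Lambda$-equivariant systems of central projections, i.e.\ Rokhlin towers indexed by F\o{}lner sets. Here strong outerness is exactly what guarantees that the action on the tracial ultrapower has no large invariant pieces, so that the towers can be built. Granting such towers, and using that $D$ is a UHF-algebra of infinite type --- hence has a unique trace and abundant, freely movable central projections --- together with equivariant $\mathcal{Z}$-stability in the sense of Matui--Sato, I would prove a one-sided approximate conjugacy: for any two strongly outer actions $\alpha,\beta$, any finite $F\subseteq\Lambda$, and any $\varepsilon>0$, there is a unitary $u$ and an $\alpha$-cocycle with $\mathrm{Ad}(u)\circ\alpha$ agreeing with $\beta$ to within $\varepsilon$ on $F$. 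Iterating this estimate through an Elliott-type intertwining upgrades it to an honest cocycle conjugacy $\alpha\cong_{\mathrm{cc}}\beta$.

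For part (2) I would split the non-amenable groups according to rigidity. If $\Lambda$ contains an infinite subgroup with relative property (T), I would use the construction announced in the abstract: to each second countable abelian pro-$p$ group $G$ one assigns a strongly outer action $\alpha_G$ whose weak cocycle conjugacy class recovers $G$ from its 1-cohomology set with the canonical pairing. Since there is a continuum of pairwise non-isomorphic such $G$, and since $G\mapsto\alpha_G$ is a Borel map from a standard Borel space of profinite groups (say closed subgroups of $\prod_n\mathbb{Z}/p^n\mathbb{Z}$) into the Polish space of actions, one simultaneously obtains a continuum of non-cocycle-conjugate actions and a Borel reduction of a known complete analytic relation (isomorphism of such groups, an $S_\infty$-orbit relation) to cocycle conjugacy, forcing the latter to be complete analytic. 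For the remaining non-amenable groups --- those with no infinite relative property (T) subgroup, the free groups being the decisive test case --- the rigid cohomological bookkeeping is unavailable, and I would instead transport the deformation/rigidity mechanism of Popa and Brothier--Vaes to the C*-setting: build a continuum of actions from spectral-gap Bernoulli-type constructions, which exist precisely because $\Lambda$ is non-amenable, and separate them by a malleable-deformation invariant read off through the trace on $D$.

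The main obstacle is exactly where this statement remains a conjecture, and it is twofold. In part (1), the equivariant Rokhlin/absorption step is known for $\Lambda=\mathbb{Z}$ and, by other routes, for restricted classes of amenable groups, but producing Rokhlin towers from bare strong outerness, with no supplementary regularity hypothesis, for an arbitrary torsion-free amenable $\Lambda$ is open and is the true crux. In part (2), the decisive difficulty is the non-rigid non-amenable case: the spectral-gap and deformation estimates that are comfortable inside a von Neumann algebra must be reproduced inside $D$, where the absence of a trace-complete structure makes the relevant cocycle and weak-mixing estimates substantially harder to control.
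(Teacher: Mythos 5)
The statement you were given is Conjecture~\ref{cnjintro}, which the paper explicitly leaves \emph{open}: there is no proof in the paper to compare yours against, and your proposal rightly presents itself as a program rather than a proof. On the portion the paper actually establishes---part~(2) for groups containing an infinite subgroup $\Delta$ with relative property (T), acting on UHF-algebras of infinite type (more generally, separable, locally reflexive, $M_{p^\infty}$-absorbing $A\cong A^{\otimes\mathbb{N}}$ with an amenable trace)---your sketch is faithful to the paper's route: one assigns to each second countable abelian pro-$p$ group $G$ the model Rokhlin action $\delta^G$ on $M_{p^\infty}$ (Theorem~\ref{thm:ModelActionProfinite}), restricts a Bernoulli $(\Lambda\curvearrowright\Lambda)$-shift to the fixed-point algebra of the diagonal $G$-action, and recovers $\widehat{G}$ from the $\Delta$-localized weak $1$-cohomology with its tensor pairing via Popa's cocycle superrigidity (Lemma~\ref{Lemma:key}, Theorems~\ref{thm:ComputCohom} and~\ref{Theorem:conjugacy}); Borelness of $G\mapsto\alpha^G$, together with the Friedman--Stanley completeness of isomorphism of countable abelian $p$-groups transported by a Borel Pontryagin duality, then yields complete analyticity (Theorem~\ref{Theorem:conjugacy2}, Corollary~\ref{cor:UHFanalytic}). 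Your parametrization by closed subgroups of $\prod_n\mathbb{Z}/p^n\mathbb{Z}$ differs only cosmetically from the paper's (quotients of the free abelian pro-$p$ group $\widehat{\mathbb{Z}}_p^{\infty}$ by closed subgroups) and is equivalent under duality.

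The genuine gaps are the two you name, plus one your case split hides. For part~(1), extracting Rokhlin towers from bare strong outerness for an arbitrary torsion-free amenable $\Lambda$ is precisely the open crux; the paper records only the known special cases ($\mathbb{Z}$ by Kishimoto, $\mathbb{Z}^N$ by Matui, Sato and Matui--Sato for $\mathcal{Z}$, Szabo's reductions), and your Ornstein--Weiss/Evans--Kishimoto outline is a plausible blueprint, not an argument. For part~(2) in the non-rigid nonamenable case, your plan to ``transport'' Brothier--Vaes runs into the obstruction the paper itself flags: their construction exploits that crossed products of $R$ by Bernoulli shifts of amenable torsion-free groups are isomorphic to $R$, whose UHF analogue is false, and there is no general mechanism for producing an action on a UHF-algebra from one on $R$; so free groups remain untouched by any argument sketched here or in the paper. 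Finally, the conjecture quantifies over \emph{all} stably finite strongly self-absorbing $D$, in particular $D=\mathcal{Z}$, while both your part~(1) sketch and the rigid half of your part~(2) lean on UHF structure. The model action of Theorem~\ref{thm:ModelActionProfinite} cannot exist on $\mathcal{Z}$: the Rokhlin property forces approximately multiplicative unital copies of $C(G)$, hence (for $G$ profinite and nontrivial) projections of trace $1/[G:H]$, which $\mathcal{Z}$ lacks. So even for property (T) groups, part~(2) for $\mathcal{Z}$ is not reached by this method, and your proposal should acknowledge that restriction explicitly.
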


The reason for excluding groups with torsion is the fact that automorphisms
of finite order, even when they are strongly outer, generate unexpected
phenomena at the level of $K$-theory which obstruct any uniqueness-type
result as in~(1). For instance, it is easy to construct $\mathbb{Z}_2$%
-actions on the CAR algebra $\otimes_{n\in\mathbb{N}}M_2$, which are
strongly outer but not cocycle conjugate. As an example, one can take the
nontrivial group element to act as the following infinite tensor products: 
\begin{equation*}
\bigotimes_{n\in\mathbb{N}}\mathrm{Ad}\left( 
\begin{array}{cc}
1 &  \\ 
& -1 \\ 
& 
\end{array}
\right) \ \ \mbox{ and } \ \ \bigotimes_{n\in\mathbb{N}}\mathrm{Ad}\left( 
\begin{array}{cccc}
1 &  &  &  \\ 
& 1 &  &  \\ 
&  & 1 &  \\ 
&  &  & -1 \\ 
&  &  & 
\end{array}
\right).
\end{equation*}

As mentioned before, the cases of $D$ being a UHF-algebra of infinite type
or the Jiang-Su algebra are the most relevant ones. Part~(1) of the
conjecture above has been confirmed in a number of particular cases: for
UHF-algebras, the case $\Lambda=\mathbb{Z}$ was proved by Kishimoto in \cite%
{kishimoto_rohlin_1995}, while the case $\Lambda=\mathbb{Z}^N$ was obtained
by Matui in \cite{matui_actions_2011}. For the Jiang-Su algebra $\mathcal{Z}$%
, the case of $\Lambda=\mathbb{Z}$ was considered by Sato in \cite%
{sato_rohlin_2010}, while Matui-Sato proved the case $\Lambda=\mathbb{Z}^{2}$
and $\Lambda=\mathbb{Z}\rtimes_{-1}\mathbb{Z}$ in \cite{matui_stability_2012}
and \cite{matui_stability_2014}. More recently, and inspired by the work of
Winter on $\mathcal{Z}$- and UHF-stable classification of C*-algebras \cite%
{winter_localizing_2014}, and for \emph{elementary} amenable groups, Szabo 
\cite{szabo_strongly_2017} reduced the case $D=\mathcal{Z}$ to the case when 
$D$ is an infinite type UHF algebra. He also showed that part (1) of
Conjecture~\ref{cnjintro} holds for a group $\Lambda $ if and only if holds
for all the finitely-generated subgroups of $\Lambda $. In particular, it
follows from this and Matui's result that part (1) of Conjecture \ref%
{cnjintro} holds when $D$ is either a UHF-algebra or $\mathcal{Z}$, and when 
$\Lambda$ is a torsion-free \emph{abelian} group.

We now turn to part~(2) in the above conjecture. It should be mentioned that
it easy to see using Jones' argument from \cite{jones_converse_1983} that,
for any nonamenable group $\Lambda$, there exist at least two strongly outer
actions of $\Lambda$ on any finite strongly self-absorbing C*-algebra.
Beyond this, nothing was known until now concerning the number of cocycle
conjugacy classes (or the complexity of the cocycle conjugacy relation) for
strongly outer actions of nonamenable groups on finite strongly
self-absorbing C*-algebras.


In the present paper, we initiate the study of actions of nonamenable groups
on UHF-algebras, and we make the first contributions to part~(2) in the
above conjecture. Our main result is as follows:

\begin{thmintro}
\label{thmintro1} (See Corollary~\ref{cor:UHFuncountable} and Corollary~\ref%
{cor:UHFanalytic}). Let $D$ be a UHF-algebra of infinite type, and let $%
\Lambda$ be a countable group containing an infinite subgroup with relative
property (T). Then there exist uncountably many non-cocycle conjugate
strongly outer actions of $\Lambda$ on $D$. Indeed, the relation of cocycle
conjugacy of such actions is complete analytic.
\end{thmintro}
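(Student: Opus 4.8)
The plan is to produce a Borel assignment $G \mapsto \alpha_G$ from the standard Borel space of second countable abelian pro-$p$ groups (for a prime $p$ occurring with infinite multiplicity in the supernatural number of $D$) to strongly outer actions of $\Lambda$ on $D$, and to prove that this assignment is rigid at the level of cocycle conjugacy: $\alpha_G$ and $\alpha_H$ are (weakly) cocycle conjugate if and only if $G \cong H$. Granting this, the theorem follows by Borel reduction. The isomorphism relation of second countable abelian pro-$p$ groups is complete analytic, via the Ulm invariants of the Pontryagin-dual countable discrete abelian $p$-groups; fixing a reference group $G_0$ together with a Borel assignment $T \mapsto G_T$ from trees on $\mathbb{N}$ for which $G_T \cong G_0$ exactly when $T$ is ill-founded, the composite $T \mapsto (\alpha_{G_T}, \alpha_{G_0})$ Borel-reduces the complete analytic set of ill-founded trees to the cocycle conjugacy relation. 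Since cocycle conjugacy and weak cocycle conjugacy are themselves analytic — each being induced by the Borel action of a Polish automorphism group together with an existential quantifier over cocycles — this exhibits all three relations as complete analytic, hence non-Borel and with continuum many classes, and the single assignment $G \mapsto \alpha_G$ settles conjugacy, cocycle conjugacy, and weak cocycle conjugacy simultaneously.

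For the construction of $\alpha_G$, I would exploit that $D$ is self-absorbing and absorbs $M_{p^\infty}$, so that it suffices to build the action on a UHF building block modelled on $G = \varprojlim_n G_n$, with $G_n$ finite abelian $p$-groups, and then tensor with a fixed strongly outer model action of $\Lambda$ on $D$ having the Rokhlin property. The finite layers $G_n$ would be encoded through their group algebras $\mathbb{C}[G_n]$ in such a way that the total action is strongly outer, while its (weak) $1$-cocycles are forced, in the inductive limit, to be parametrised by the continuous characters of $G$. The target of the computation is a canonical identification of the (weak) $1$-cohomology of $\alpha_G$ with an object dual to $G$, equipped with a canonical pairing into the roots of unity, from which $G$ is recovered by Pontryagin-type duality. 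The Rokhlin property of the model tensor factor is what I expect to guarantee both strong outerness and the genericity needed to perform this identification.

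The heart of the matter, and the step I expect to be the main obstacle, is the rigidity direction: that the (weak) cocycle conjugacy class of $\alpha_G$ determines $G$. This is exactly where the relative property (T) of an infinite subgroup $\Gamma \le \Lambda$ enters. The mechanism is a spectral-gap argument: for such $\Lambda$, approximate $1$-cocycles of $\alpha_G$ valued in the unitaries of the tracial completion are uniformly close to genuine cocycles, and genuine cocycles close to the trivial one are coboundaries, so the a priori continuum of cohomology classes collapses onto the discrete, computable set prescribed by $G$, and the canonical pairing descends to an invariant of the cocycle conjugacy class. Carrying this out requires (i) verifying that the pairing is well-defined and invariant even under the looser identifications allowed by \emph{weak} cocycle conjugacy, and (ii) ruling out extra symmetries, so that the only isomorphisms of the cohomological data come from $\mathrm{Aut}(G)$ and no two non-isomorphic groups are conflated. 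Checking that $G \mapsto \alpha_G$ is Borel, while routine, must also be confirmed for the reduction to be legitimate.
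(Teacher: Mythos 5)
Your global architecture matches the paper's: a Borel assignment $G\mapsto\alpha_G$ from second countable abelian pro-$p$ groups to strongly outer actions on $D$ that is rigid for (weak) cocycle conjugacy, composed with the completeness of isomorphism of countable abelian $p$-groups (Friedman--Stanley) transported through a Borel-witnessed Pontryagin duality; this is exactly the route of Subsection~\ref{Sbs:groups} and Theorem~\ref{Theorem:conjugacy2}. The gap is in the two places you yourself flag as the heart of the matter, and it is genuine. First, your construction never actually couples $G$ to the $\Lambda$-action: an inductive limit encoding the layers $G_n$ through $\mathbb{C}[G_n]$, tensored with a ``strongly outer model action of $\Lambda$ with the Rokhlin property,'' specifies no mechanism by which $1$-cocycles of the $\Lambda$-action see characters of $G$ (and for a nonamenable discrete $\Lambda$ there is no Rokhlin model action in play; in the paper the Rokhlin property belongs to the model action $\delta^G$ of the \emph{compact} group $G$ from Theorem~\ref{thm:ModelActionProfinite}, and is used so that the relevant fixed-point algebras are again UHF of the right type and so that the construction can be carried out in a Borel way). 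The coupling the paper uses is: let $\Lambda$ act by the Bernoulli shift $\beta$ on $(M_{p^{\infty}}\otimes A)^{\otimes\Lambda}$, note that $\beta$ commutes with the diagonal compact-group action $\rho=(\delta^G\otimes\mathrm{id}_A)^{\otimes\Lambda}$, and define $\alpha^G$ as the restriction of $\beta$ to the fixed-point algebra of $\rho$ (Definition~\ref{df:alpha}).

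Second, the rigidity mechanism you describe---spectral-gap local rigidity, i.e.\ approximate cocycles lie near genuine ones and cocycles near the trivial one are coboundaries---is not sufficient and is not what drives the proof. Local rigidity from (relative) property (T) only controls cocycles in a neighborhood of the trivial one and yields no global computation of $H^1$; indeed, no such collapse of cohomology holds for arbitrary strongly outer actions of property (T) groups. What is needed is Popa's cocycle \emph{superrigidity} for Bernoulli actions, which exploits the malleability of the Bernoulli deformation together with relative property (T) of $\Delta\subseteq\Lambda$: every weak $1$-cocycle $u$ for the restriction of $\beta$ to $\Delta$ untwists as $u_{\gamma}=\mu_{\gamma}v^{\ast}\beta_{\gamma}(v)$ with $v$ a unitary in the \emph{ambient} Bernoulli algebra. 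The point of the fixed-point construction is then that, since $u$ takes values in the fixed-point algebra, applying $\rho_g$ shows $\rho_g(v)v^{\ast}$ is invariant under the $\Delta$-Bernoulli action, hence a scalar by mixing (Proposition~\ref{Proposition:bernoulli-wm}); so $v$ is a $\rho$-eigenvector and its eigenvalue is a character of $G$. This is what produces the bijection of the \emph{$\Delta$-localized} weak cohomology set $H^1_{\Delta,w}(\overline{\alpha}^{\tau})$ with $\widehat{G}$, compatibly with the pairing (Lemma~\ref{Lemma:key}, Theorem~\ref{thm:ComputCohom}); note also that only the localized cohomology is computed, since superrigidity controls cocycles over $\Delta$ alone. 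None of this is available in your setup, because your action is not realized inside a Bernoulli action carrying a commuting $G$-symmetry.
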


(Our result holds for a more general class of not necessarily simple
C*-algebras; see Theorem~\ref{Theorem:uncountably} for the precise
statement.)

It is worth mentioning that our results cannot be derived from those of
Brothier-Vaes. First, there is no general method for producing an action on
a UHF-algebra from an action on $R$. Moreover, no obvious modification of
the construction in \cite{brothier_families_2015} seems to produce an action
on a UHF-algebra. (They use the fact that the crossed product of $R$ by a
Bernoulli shift of an amenable torsion-free group is isomorphic to $R$, and
the UHF-analog of this fact is far from true.) Even more, the actions we
construct in Theorem~\ref{thmintro1} are shown to remain cocycle
inequivalent in the weak closure of $D$. Hence, our results imply the result
of Brothier-Vaes for groups with relative property (T).



The assertion that the relation of cocycle conjugacy of free actions of $%
\Lambda $ on $A$ is a complete analytic set can be interpreted as follows.
There does not exist an explicit uniform procedure that, given two strongly
outer actions of $\Lambda$ on $D$, runs for countably many (but possibly
transfinitely many) steps, at each step testing membership in some given
open sets, and at the end decides whether the given actions are cocycle
conjugate or not. In fact, the problem of deciding whether two such actions
are cocycle conjugate is as hard as testing membership in any analytic set.
Similar conclusions hold for conjugacy and weak cocycle conjugacy. For a
more detailed discussion on this interpretation, see \cite[Section 2.4]%
{epstein_borel_2011}.

The proof of our main theorem consists in assigning, to any second countable
abelian pro-$p$ group $G$, a strongly outer action of $\Lambda $ on $D$
whose weak cocycle conjugacy class completely \textquotedblleft
remembers\textquotedblright\ the group $G$. Using Popa's superrigidity
results from \cite{popa_some_2006}, the group $G$ is reconstructed from this
action via its (weak, localized) $1$-cohomology set, endowed with a
canonical ($2$-sorted) group structure. The starting point of our
construction is a canonical model action of $G$ on the UHF-algebra $%
M_{p^{\infty }}$, which we construct in Section~\ref{Section:model}. The
rest of the construction can be seen as a C*-algebra analogue of the
construction of factors of measure-preserving Bernoulli actions due to Popa 
\cite{popa_computations_2006} and T\"{o}rnquist \cite%
{tornquist_localized_2011}; see also \cite{epstein_borel_2011}.

The methods used in this construction are not specific to our context, and
can be used to compute (weak) 1-cohomology sets in other interesting cases.
As an instance of this, the last section of this paper is devoted to
constructing actions of infinite property (T) groups on $R$ with prescribed
(weak) 1-cohomology. In this context, these cohomology sets do not have a
canonical group structure. The actions we construct are self-absorbing (in a
strong sense), and there is a canonical `pairing' function $m^{\alpha}\colon
H^1_w(\alpha)\times H^1_w(\alpha)\to H^1_w(\alpha\otimes\alpha)$; see %
\autoref{Definition:mTheta}. Even this by itself does not guarantee the
existence of a group structure, but this turns out to be the case for the
actions we construct.

More specifically, for infinite groups with property (T), we prove the
following analog of the main result of \cite{popa_computations_2006} for
actions on $R$ (the result we prove is somewhat more general):

\begin{thmintro}
Let $\Lambda$ be an infinite countable property (T) group, and let $\Gamma$
be any countable abelian group. Then there exist an outer action $%
\alpha\colon \Lambda\to \mathrm{Aut} (R)$ and bijections $\eta\colon
H^1_w(\alpha)\to \Gamma$ and $\eta^{(2)}\colon H^1_w(\alpha\otimes\alpha)\to
\Gamma$ making the following diagram commute: 
\begin{align*}
\xymatrix{ H^1_w(\alpha)\times H^1_w(\alpha)
\ar[d]_-{m^{\alpha}}\ar[rr]^-{\eta\times \eta} && \Gamma\times \Gamma
\ar[d]^-{\mathrm{multiplication}}\\
H^1_w(\alpha\otimes\alpha)\ar[rr]_-{{\eta^{(2)}}} && \Gamma }
\end{align*}
\end{thmintro}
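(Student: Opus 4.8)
The plan is to realize $\alpha$ as a Popa-type malleable (Bernoulli-like) action of $\Lambda$ on $R$ whose base algebra is built directly from $\Gamma$, so that the weak $1$-cohomology can be computed by cocycle superrigidity and is seen to ``remember'' $\Gamma$ together with its group law. Concretely, I would first fix a base finite von Neumann algebra $A_0$ carrying a distinguished abelian piece indexed by $\Gamma$ — for instance one built from the group von Neumann algebra $L(\Gamma)\cong L^\infty(\hat\Gamma)$, whose Pontryagin dual packages the group $\Gamma$ — and take $\alpha$ to be the Bernoulli shift of $\Lambda$ on $R\cong\bar\bigotimes_{g\in\Lambda}A_0$, arranged so that this distinguished piece furnishes a canonical family of $1$-cocycles indexed by $\Gamma$. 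Outerness (indeed freeness) of $\alpha$ follows from the mixing property of the Bernoulli shift. The self-absorbing structure $\alpha\cong\alpha\otimes\alpha_0$ built into the model gives a canonical identification of $\alpha\otimes\alpha$ with an action of the same form, which is what will let $H^1_w(\alpha\otimes\alpha)$ be computed in parallel with $H^1_w(\alpha)$.

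The analytic heart of the argument is Popa's cocycle superrigidity for property (T) groups from \cite{popa_some_2006}: since $\Lambda$ has property (T) and $\alpha$ is a malleable mixing action, every unitary $1$-cocycle $u\colon\Lambda\to\mathcal U(R)$ should be cohomologous to one of a rigid, ``local'' form — up to a coboundary it takes values in the distinguished copy of $L(\Gamma)$ and is determined by a single element of $\Gamma$. I would carry this out by the standard deformation/rigidity step, running Popa's malleable deformation $(\alpha_t)$ and using property (T) to show the perturbed cocycle stays close to the original and hence untwists, and then checking that the untwisted representatives are parametrized exactly by $\Gamma$. This produces the bijection $\eta\colon H^1_w(\alpha)\to\Gamma$; applying the same computation to the self-absorbed action gives $\eta^{(2)}\colon H^1_w(\alpha\otimes\alpha)\to\Gamma$.

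It then remains to match the pairing $m^\alpha$ with the multiplication of $\Gamma$. By definition (\autoref{Definition:mTheta}) $m^\alpha([u],[v])=[u\otimes v]$, so I would evaluate this on the canonical representatives $u=\eta^{-1}(s)$ and $v=\eta^{-1}(t)$ coming from $L(\Gamma)$. Because these representatives multiply inside $L(\Gamma)$ according to the group law of $\Gamma$, the tensor cocycle $u\otimes v$ is cohomologous, after the self-absorbing identification of $\alpha\otimes\alpha$ with the model, to the canonical representative of $s\cdot t$; this is exactly the commutativity $\eta^{(2)}\circ m^\alpha=\mathrm{mult}\circ(\eta\times\eta)$ asserted by the diagram.

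I expect the main obstacle to be two-fold and concentrated in the cohomology computation. First, one must ensure that the superrigidity untwisting leaves \emph{exactly} $\Gamma$ and no spurious extra factor: a Bernoulli action of a property (T) group a priori also carries the scalar cocycles coming from $\mathrm{Hom}(\Lambda,\mathbb T)$, and one must verify that passing to \emph{weak} $1$-cohomology — modding out by the automorphisms commuting with $\alpha$, which the self-absorbing structure supplies in abundance — absorbs this finite character contribution so that the bijection with $\Gamma$ is clean. Second, the identifications $\eta$ and $\eta^{(2)}$ must be chosen compatibly through the self-absorbing isomorphism $\alpha\otimes\alpha\cong\alpha\otimes\alpha_0$, so that the diagram commutes on the nose rather than merely up to an automorphism of $\Gamma$; keeping the deformation/rigidity estimates compatible with the tensor structure is the technically delicate point.
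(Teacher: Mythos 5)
Your construction breaks at its very first step: you take $\alpha$ to be the Bernoulli shift itself. But Popa's cocycle superrigidity theorem---the same theorem you invoke to compute the cohomology---says that for a Bernoulli shift $\beta$ of a property (T) group, \emph{every} weak $1$-cocycle is a weak coboundary (every genuine $1$-cocycle untwists to a character of $\Lambda$, and weak cohomology quotients the characters away). This holds for an arbitrary base $A_0$, since malleability and mixing of the shift do not depend on the base. Consequently $H^1_w(\alpha)$ is a single point no matter how you build $A_0$ out of $L(\Gamma)$, and $\Gamma$ cannot appear. Your concern that $\mathrm{Hom}(\Lambda,\mathbb{T})$ might be a ``spurious extra factor'' has it backwards: for the plain shift the characters are the \emph{entire} cohomology, and passing to weak cohomology erases them and leaves nothing. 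Relatedly, your ``canonical family of $1$-cocycles indexed by $\Gamma$'' valued in a distinguished copy of $L(\Gamma)$ is not well defined: a single tensor-factor copy of $L(\Gamma)$ is not invariant under the shift, so constant maps into it are not cocycles; the diagonal copy $\bigotimes_{g}\lambda_{s}$ does not exist in the infinite tensor product; and any family of the form $u_{\gamma}=v^{\ast}\beta_{\gamma}(v)$ consists, by construction, of coboundaries, hence is trivial in cohomology. (A side issue: with $A_0=L(\Gamma)\cong L^{\infty}(\widehat{\Gamma})$ abelian, $\overline{\bigotimes}_{g\in\Lambda}A_0$ is abelian, not $R$.)

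The missing idea, which is how the paper proceeds (Lemma \ref{Lemma:key}, Proposition \ref{prop:ModelActCpct}, Theorem \ref{thm:PrescrCohom}), is to pass to a \emph{fixed-point subalgebra} of the Bernoulli shift rather than using the shift itself. Let $G=\widehat{\Gamma}$, a compact group, and build a trace-preserving action $\theta$ of $G$ on a copy of $R$ admitting an equivariant copy of $(C(G),\mathtt{Lt})$, so that every character of $G$ has a unitary eigenvector; let $\rho=\theta^{\overline{\otimes}\Lambda}$ act diagonally on $N=R^{\overline{\otimes}\Lambda}$, commuting with the shift $\beta$, and let $\alpha$ be the restriction of $\beta$ to $B=N^{\rho}$, which is arranged (via the model action of Proposition \ref{prop:ModelActCpct} and Lemma \ref{lemma:WeakClosureR}) to be isomorphic to $R$. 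A weak cocycle $u$ for $\alpha$ is still one for $\beta$, so Popa's theorem gives $u_{\gamma}=\mu_{\gamma}v^{\ast}\beta_{\gamma}(v)$ with $v\in U(N)$---but $v$ need \emph{not} lie in $B$, and that is exactly where $\Gamma$ hides: since each $u_{\gamma}$ is fixed by $\rho$ and $\beta$ is mixing, $\rho_{g}(v)v^{\ast}$ is a scalar $\chi_{u}(g)$, and $[u]\mapsto\chi_{u}\in\widehat{G}=\Gamma$ is the desired bijection, with surjectivity coming from the eigenvectors supplied by the copy of $C(G)$, and with $m^{\alpha}$ matching multiplication in $\Gamma$ because tensor products of eigenvectors multiply their eigenvalues. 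Without this fixed-point (quotient) mechanism, the outline you propose cannot be repaired.
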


This gives a different proof of Theorem~B of~\cite{brothier_families_2015}
in the case that $\Lambda$ has (a subgroup with the relative) property (T).
For comparison, observe that Ocneanu's result implies that all outer actions
of amenable groups on $R$ have canonically isomorphic cohomology.


In the following, all topological groups are supposed to be \emph{Hausdorff}
and \emph{second countable}. All tensor products of C*-algebras are supposed
to be minimal (also called spatial); see \cite[Section II.9]%
{blackadar_operator_2006}. If $A$ is a C*-algebra and $S$ is a finite set,
then we let $A^{\otimes S}$ be the (minimal) tensor product of a family of
copies of $A$ indexed by $S$. Similarly, when $A$ is unital and $X$ is a
countable set, then we let $A^{\otimes X}$ denote the limit of the direct
system $\left( A^{\otimes S}\right) $, where $S$ varies in the collection of
finite subsets of $X$ ordered by containment, and the connective maps are
the canonical unital *-homomorphisms $\iota _{S,T}\colon A^{\otimes
S}\rightarrow A^{\otimes T}$ for $S\subset T\subset X$. In the von
Neumann-algebraic setting, we will only consider tensor products of tracial
von Neumann algebras with respect to distinguished normal tracial states,
which we denote by $\overline{\otimes }$; see \cite[Section III.3.1]%
{blackadar_operator_2006}.

\subsubsection*{Acknowledgments}

We are grateful to Samuel Coskey, \L ukasz Grabowski, Daniel Hoff, Alexander
Kechris, Andr\'{e} Nies, Stefaan Vaes, and Stuart White for many helpful
conversations. Particularly, we would like to thank Alexander Kechris for
suggesting a proof of Proposition \ref{Proposition:free-G}, and Stuart White
for suggesting the formulation of Lemma \ref{lemma:WeakClosureR} below.
Finally, we thank the referee for their careful reading of the manuscript,
and for suggesting numerous improvements.

\section{Preliminary notions on group actions\label{Section:preliminary}}

\subsection{Actions of groups on tracial von Neumann algebras\label{Sbs:vN}}

We recall some terminology about group actions on von Neumann algebras. A 
\emph{tracial von Neumann algebra }is a pair $(M,\tau )$, where $M$ is a von
Neumann algebra and $\tau $ is a normal tracial state on $\tau $. We denote
by $\mathrm{Aut}(M,\tau )$ the group of $\tau $-preserving automorphisms of $%
M$. Let $\Lambda $ be a discrete group. An \emph{action} of $\Lambda $ on $%
(M,\tau )$ is a group homomorphism $\alpha \colon \Lambda \rightarrow 
\mathrm{Aut}(M,\tau )$. An automorphism $\theta \in \mathrm{Aut}(M,\tau )$
is said to be $\emph{inner}$ if there exists a unitary $u\in M$ with $\theta
(x)=uxu^{\ast }$ for all $x\in M$. It is said to be $\emph{outer}$ if it is
not inner, and \emph{properly outer} if for every $\theta $-invariant
projection $p\in M$, the restriction of $\theta $ to $pMp$ is outer; see 
\cite[Definition XVII.1.1]{takesaki_theory_2003}.

\begin{remark}
\label{Remark:center}As it is remarked in \cite[Section 4]%
{kerr_turbulence_2010}, in the definition of properly outer autorphism one
can equivalently only consider $\theta $-invariant \emph{central }%
projections; see also the comment after Theorem XVII.1.2 in \cite%
{takesaki_theory_2003}. In particular, an automorphism of a \emph{factor} is
properly outer if and only if it is outer.
\end{remark}

Let $\theta _{0}\in \mathrm{Aut}(M_{0},\tau _{0})$ and $\theta _{1}\in 
\mathrm{Aut}(M_{1},\tau _{1})$ be automorphisms of tracial von Neumann
algebras. It is shown in \cite[Corollary 1.12]{kallman_generalization_1969}
that, if either $\theta _{0}$ or $\theta _{1}$ is properly outer, then $%
\theta _{0}\otimes \theta _{1}$ is a properly outer automorphism of $(M_{0}%
\overline{\otimes }M_{1},\tau _{0}\otimes \tau _{1})$.

\begin{definition}
\label{Definition:wm} Let $(M,\tau)$ be a tracial von Neumann algebra and
let $\Lambda$ be a discrete group. An action $\alpha \colon \Lambda \to%
\mathrm{Aut} ( M, \tau ) $ is called:

\begin{enumerate}
\item \emph{ergodic}, if the fixed point algebra $M^{\alpha}=\left\{ x\in
M\colon \alpha _{\gamma }(x)=x \mbox{ for all } \gamma\in\Lambda\right\} $
contains only the scalar multiples of the identity; see \cite[Definition 7.3]%
{takesaki_theory_2002};

\item \emph{weakly mixing}, if for any finite subset $F\subseteq M$ and $%
\varepsilon >0$, there exists $\gamma \in \Lambda $ such that 
\begin{equation*}
\left\vert \tau (x\alpha _{\gamma }(y))-\tau (x)\tau (y)\right\vert
<\varepsilon
\end{equation*}%
for every $x,y\in F$; see \cite[Definition D.1]{vaes_rigidity_2007};

\item \emph{mixing}, if for every $a,b\in M$ one has $\tau (a\alpha _{\gamma
}(b))\rightarrow \tau (a)\tau (b)$ for $\gamma \rightarrow \infty $; see 
\cite[Definition D.1]{vaes_rigidity_2007};

\item \emph{outer}, if $\alpha _{\gamma }$ is not inner for every $\gamma
\in \Lambda \setminus \left\{ 1\right\} $;

\item \emph{free}, if $\alpha _{\gamma }$ is properly outer for every $%
\gamma \in \Lambda \setminus \{1\}$; see \cite[Subsection 4.1]%
{kerr_turbulence_2010}.
\end{enumerate}
\end{definition}

Observe that any free action is, in particular, outer. When $M$ is a factor,
the converse holds in view of Remark \ref{Remark:center}.

\begin{remark}
\label{rmk:CharMixing} An action $\alpha $ is weakly mixing if and only if
the only finite-dimensional vector subspace of $L^{2}(M, \tau )$ which is
invariant under the representation associated with $\alpha $ is the space of
scalar multiples of the identity; see \cite[Proposition 2.4.2.]%
{popa_some_2006} and \cite[Proposition D.2]{vaes_rigidity_2007}.
\end{remark}

Let $\alpha $ and $\beta $ be actions of $\Lambda $ on tracial von Neumann
algebras $(M_{0}, \tau _{0})$ and $(M_{1}, \tau _{1})$, respectively. We let 
$(M_{0}\overline{\otimes }M_{1}, \tau _{0}\otimes \tau _{1})$ be the tensor
product of $M_{0}$ and $M_{1}$ with respect to the normal tracial states $%
\tau _{0}, \tau _{1}$ \cite[Section III.3.1]{blackadar_operator_2006}.
Define $\alpha \otimes \beta \colon \Lambda \rightarrow \mathrm{Aut}(M_{0}%
\overline{\otimes }M_{1}, \tau _{0}\otimes \tau _{1})$ to be the action
given by $(\alpha\otimes \beta )_{\gamma }=\alpha _{\gamma }\otimes \beta
_{\gamma }$ for $\gamma \in \Lambda $. It is easy to check that $%
\alpha\otimes\beta$ is (weakly) mixing if both $\alpha$ and $\beta$ are.

\begin{definition}
\label{Definition:invariant}Let $\pi $ a unitary representation of $\Lambda $
on a Hilbert space $H$. Following \cite{kechris_amenable_2008}, we say that $%
\pi $ has \emph{almost invariant vectors}, and write $1_{\Lambda }\prec \pi $%
, if for every $\varepsilon >0$ and finite subset $F\subseteq \Lambda $,
there exists a vector $\xi \in H$ such that $\Vert \pi (\gamma )\xi -\xi
\Vert \leq \varepsilon $ for every $\gamma \in F$. A unitary representation $%
\pi \colon \Lambda \rightarrow U(H)$ is said to be a $c_{0}$-\emph{%
representation }if for every $\xi ,\eta \in H$, the function $\gamma \mapsto
\langle \pi (\gamma )\xi ,\eta \rangle $ belongs to $c_{0}(\Lambda )$.
\end{definition}

Let $X$ be a countable set endowed with an action of $\Lambda $. We say that
the action is \emph{amenable }if it satisfies the following \emph{F\o lner
condition}: for any finite subset $Q\subseteq \Lambda $ and $\varepsilon >0$%
, there exists a finite subset $F\subseteq X$ such that $|\gamma F\triangle
F|\leq \varepsilon |F|$ for every $\gamma \in Q$. For an action $\Lambda
\curvearrowright X$, we consider the corresponding \emph{left regular
representation }$\lambda _{X}\colon \Lambda \rightarrow U(\ell ^{2}(X))$
determined by $\lambda _{X}(\gamma )(\delta _{x})=\delta _{\gamma ^{-1}x}$
for $\gamma \in \Lambda $ and $x\in X$. Theorem 1.1 in \cite%
{kechris_amenable_2008} asserts that $\Lambda \curvearrowright X$ is
amenable if and only if $1_{\Lambda }\prec \lambda _{X}$.

For a tracial von Neumann algebra $(M, \tau )$, we denote by $(M, \tau )^{%
\overline{\otimes }X}$ the tensor product of copies of $M$ indexed by $X$
with respect to the normal tracial state $\tau $; see \cite[III.3.1]%
{blackadar_operator_2006}. Then $(M, \tau )^{\overline{\otimes }X}$ carries
a canonical trace obtained from $\tau $, which we still denote by $\tau $.
We denote by $M^{\odot X}$ the algebraic tensor product, which is dense in $%
(M, \tau )^{\overline{\otimes }X}$. If $Y$ is a subset of $X$, then we
canonically identify $M^{\overline{\otimes }Y}$ with a subalgebra of $(M,
\tau )^{\overline{\otimes }X}$, and $M^{\odot Y}$ with a subalgebra of $%
M^{\odot X}$.

\begin{notation}
\label{nota:BernoulliShiftvNA} Let $X$ be a countable set endowed with an
action $\Lambda \curvearrowright X$, and let $(M, \tau )$ be a tracial von
Neumann algebra. We denote by $\beta _{\Lambda \curvearrowright X,M}\colon
\Lambda \rightarrow \mathrm{Aut}((M, \tau )^{\overline{\otimes }X})$ the
associated Bernoulli $\left( \Lambda \curvearrowright X\right) $-action with
base $\left( M, \tau \right) $, defined by permuting the indices according
to the action of $\Lambda $ on $X$.
\end{notation}

\begin{example}
In the context above, when $M=L^{\infty }(Z, \mu )$ for a probability space $%
(Z, \mu )$ and $\tau (f)=\int fd\mu $, one has $(M, \tau )^{\overline{%
\otimes }X}=L^{\infty }(Z^{X}, \mu ^{X})$ with trace $\tau (f)=\int fd\mu
^{X}$. The action on $(M, \tau )^{\overline{\otimes }X}$ corresponds in this
case to the Bernoulli action of $\Lambda $ on $(Z^{X}, \mu ^{X})$ as
considered in \cite{kechris_amenable_2008}.
\end{example}

We denote by $\kappa $ the corresponding Koopman representation of $\Lambda $
on $L^{2}(M^{\overline{\otimes }X}, \tau )$, and by $\kappa _{0}$ the
restriction of $\kappa $ to the orthogonal complement in $L^{2}(M^{\overline{%
\otimes }X}, \tau )$ of the space of scalar multiples of the identity.

The following characterization of mixing Bernoulli actions is well know; see 
\cite[Lemma 2.4.3]{popa_some_2006} and \cite[Proposition 2.1 and Proposition
2.3]{kechris_amenable_2008} for the commutative case.

\begin{proposition}
\label{Proposition:bernoulli-wm} Let $X$ be a countable set endowed with an
action of $\Lambda $, and let $(M,\tau )$ be a tracial von Neumann algebra
with a projection $p\in M$ such that $0<\tau (p)<1$. Then the Bernoulli
action $\beta _{\Lambda \curvearrowright X,M}\colon \Lambda \rightarrow 
\mathrm{Aut}((M,\tau )^{\overline{\otimes }X})$ is mixing if and only if the
stabilizers of the action $\Lambda \curvearrowright X$ are finite.
\end{proposition}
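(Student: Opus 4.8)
The plan is to reformulate mixing in terms of the Koopman representation and then reduce to a combinatorial statement about stabilizers. Recall that the Bernoulli action $\beta = \beta_{\Lambda \curvearrowright X, M}$ is mixing if and only if its Koopman representation $\kappa_0$ on $L^2((M,\tau)^{\overline{\otimes}X}) \ominus \mathbb{C}1$ is a $c_0$-representation; this follows by expressing $\tau(a\beta_\gamma(b))$ as a matrix coefficient of $\kappa$ and subtracting the rank-one projection onto $\mathbb{C}1$, using that $M$ is $\|\cdot\|_2$-dense in $L^2((M,\tau)^{\overline{\otimes}X})$. Writing $H_0 = L^2(M, \tau) \ominus \mathbb{C}1$, the distinguished vector $1$ in each tensor factor gives an orthogonal decomposition
\[
L^2((M,\tau)^{\overline{\otimes}X}) = \mathbb{C}1 \oplus \bigoplus_{\emptyset \neq F \subseteq X \text{ finite}} \mathcal{H}_F, \qquad \mathcal{H}_F = \bigotimes_{x \in F} H_0^{(x)},
\]
where $\mathcal{H}_F$ consists of tensors lying in $H_0$ on the coordinates in $F$ and equal to $1$ elsewhere. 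Since $\beta$ merely permutes coordinates, $\kappa(\gamma)$ maps $\mathcal{H}_F$ isometrically onto $\mathcal{H}_{\gamma F}$; thus $\kappa_0$ is the orthogonal sum, over the $\Lambda$-orbits of finite nonempty subsets of $X$, of the subrepresentations it induces on the spans of those orbits.

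Next I would record the combinatorial fact that all point stabilizers $\Lambda_x = \{\gamma : \gamma x = x\}$ are finite if and only if the setwise stabilizer $\Lambda_F = \{\gamma : \gamma F = F\}$ of every finite $F \subseteq X$ is finite. Indeed, $\Lambda_F$ contains the pointwise stabilizer $\bigcap_{x \in F}\Lambda_x$ with index at most $|F|!$, so finiteness of the point stabilizers forces finiteness of $\Lambda_F$; conversely $\Lambda_{\{x\}} = \Lambda_x$, so an infinite point stabilizer is already an infinite setwise stabilizer.

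For the implication ($\Leftarrow$), assume every $\Lambda_F$ is finite. For $\xi \in \mathcal{H}_F$ and $\eta \in \mathcal{H}_{F'}$ the coefficient $\langle \kappa_0(\gamma)\xi, \eta\rangle$ vanishes unless $\gamma F = F'$, and the set of such $\gamma$ is either empty or a single coset of $\Lambda_F$, hence finite; so this coefficient vanishes for all but finitely many $\gamma$. A routine truncation argument — approximating arbitrary $\xi, \eta \in L^2 \ominus \mathbb{C}1$ in $\|\cdot\|_2$ by finite sums of such vectors and using $\|\kappa_0(\gamma)\| = 1$ — upgrades this to the statement that $\kappa_0$ is $c_0$, so $\beta$ is mixing. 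For the contrapositive of ($\Rightarrow$), suppose some $\Lambda_x$ is infinite. Because the base action of a Bernoulli action is trivial, every $\gamma \in \Lambda_x$ fixes the $x$-th tensor factor, so $\kappa(\gamma)$ acts as the identity on $\mathcal{H}_{\{x\}} = H_0^{(x)}$. Here the hypothesis $0 < \tau(p) < 1$ enters: the vector $\xi = \widehat{p} - \tau(p)1$, placed at coordinate $x$, is a nonzero element of $\mathcal{H}_{\{x\}}$, and
\[
\langle \kappa_0(\gamma)\xi, \xi\rangle = \|\xi\|_2^2 = \tau(p) - \tau(p)^2 > 0 \qquad \text{for every } \gamma \in \Lambda_x.
\]
Since $\Lambda_x$ is infinite, this matrix coefficient does not lie in $c_0(\Lambda)$, so $\beta$ is not mixing.

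The only genuinely delicate point is keeping the tensor-product bookkeeping correct: verifying that $\kappa$ permutes the pieces $\mathcal{H}_F$ exactly as claimed and that the orthogonal decomposition is complete. Everything else — the stabilizer equivalence, the truncation argument, and the final coefficient computation — is routine once this structure is in place.
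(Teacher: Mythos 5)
Your proof is correct. The paper itself gives no proof of this proposition---it is cited as well known, deferring to \cite[Lemma 2.4.3]{popa_some_2006} and to \cite[Propositions 2.1 and 2.3]{kechris_amenable_2008} for the commutative case---and your argument (reformulating mixing as the Koopman representation $\kappa_0$ being $c_0$, decomposing $L^2((M,\tau)^{\overline{\otimes}X})\ominus\mathbb{C}1$ into the pieces $\mathcal{H}_F$ indexed by nonempty finite subsets, observing that nonzero coefficients between $\mathcal{H}_F$ and $\mathcal{H}_{F'}$ live on a single coset of the setwise stabilizer, and using the vector $\widehat{p}-\tau(p)1$ to defeat mixing when some point stabilizer is infinite) is precisely the standard argument underlying those references, with the hypothesis $0<\tau(p)<1$ entering exactly where it must.
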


\subsection{Actions of groups on\ C*-algebras\label{Sbs:C*}}

Let $\Lambda $ be a discrete group, and let $A$ be a unital C*-algebra.
Write $\mathrm{Aut}(A)$ for the automorphism group of $A$. An \emph{action}
of $\Lambda $ on $A$ is a group homomorphism $\alpha \colon \Lambda
\rightarrow \mathrm{Aut}(A)$. In this case, we also say that the pair $%
(A,\alpha )$ is a $\Lambda $-C*-algebra. We denote by $A^{\alpha }$ the 
\emph{fixed point algebra }$A^{\alpha }=\left\{ a\in A\colon \alpha _{\gamma
}(a)=a\mbox{ for all }\gamma \in \Lambda \right\} $. We say that elements $%
x,y$ of $A$ are \emph{equivalent modulo scalars}, and write $x=y{}\mathrm{mod%
}\mathbb{C}$, if $x=\lambda y$ for some $\lambda \in \mathbb{C}%
\setminus\{0\} $. We denote by $U(A)$ the unitary group of $A$.

\begin{definition}
Let $\alpha \colon \Lambda\to \mathrm{Aut}(A)$ be an action of a discrete
group $\Lambda$ on a unital C*-algebra $A$, and let $u\colon \Lambda\to U(A)$
be a function.

\begin{enumerate}
\item We say that $u$ is a $1$-cocycle for $\alpha $ if $u_{\gamma }\alpha
_{\gamma }(u_{\rho })=u_{\gamma \rho }$ for every $\gamma , \rho \in \Lambda 
$.

\item We say that $u$ is a \emph{weak }$1$-cocycle\emph{\ }if $u_{\gamma
}\alpha _{\gamma }(u_{\rho })=u_{\gamma \rho }\ \mathrm{mod}\mathbb{C}$ for
every $\gamma , \rho \in \Lambda $.
\end{enumerate}
\end{definition}

The notion of weak $1$-cocycles allows one to define the weak $1$-cohomology
of actions. We will mostly use it for actions on tracial von Neumann
algebras, but the definition can be given in general.

\begin{definition}
\label{df:1-cohomology} Let $\alpha \colon \Lambda \rightarrow \mathrm{Aut}%
(A)$ be an action of a discrete group $\Lambda $ on a unital C*-algebra $A$.
Following \cite{popa_some_2006}, we say that two weak $1$-cocycles $u$ and $%
u^{\prime }$ for $\alpha $ are \emph{weakly cohomologous }(or cohomologous
modulo scalars), if there exists a unitary $v\in U(A)$ such that $u_{\gamma
}^{\prime }=v^{\ast }u_{\gamma }\alpha _{\gamma }(v)\ \mathrm{mod}\mathbb{C}$
for every $\gamma \in \Lambda $. We say that $u$ is a \emph{weak coboundary }%
if it is weakly cohomologous to the weak $1$-cocycle constantly equal to $1$.

We denote by $Z_{w}^{1}(\alpha )$ the set of weak $1$-cocycles for $\alpha $%
. The relation of being weakly $1$-cohomologous is an equivalence relation
on $Z_{w}^{1}(\alpha )$, and we let $H_{w}^{1}(\alpha )$ be the
corresponding quotient set, called the \emph{weak cohomology set}. The class
of the weak $1$-cocycle $u$ will be denoted by $[u]$.
\end{definition}

Let us use the notation as in the definition above. If $A$ is abelian, then
the product of two weak $1$-cocycles for $\alpha $ is again a weak $1$%
-cocycle for $\alpha $, and thus $H_{w}^{1}(\alpha )$ can be given a
canonical group structure. In general, however, one can not define a group
operation on $H_{w}^{1}\left( \alpha \right) $ in a similar fashion. To make
up for the lack of multiplication in the $1$-cohomology set $H_{w}^{1}\left(
\alpha \right) $, we consider a natural \textquotedblleft two-sorted group
structure\textquotedblright\ on $H_{w}^{1}\left( \alpha \right) $, given by
a pairing function $H_{w}^{1}(\alpha )\times H_{w}^{1}(\alpha )\rightarrow
H_{w}^{1}(\alpha \otimes \alpha )$. Such a pairing function will be used to
encode the group operation of a given countable group.

\begin{definition}
\label{Definition:mTheta} Let $(M,\tau)$ be a tracial von Neumann algebra,
let $\alpha\colon \Lambda\to\mathrm{Aut}(M,\tau)$ be an action, and denote
by $\alpha \otimes \alpha $ be the diagonal action of $\Lambda $ on $M%
\overline{\otimes} M$. Then there is a canonical function 
\begin{equation*}
\widetilde{m}^{\alpha }\colon Z_{w}^{1}(\alpha )\times Z_{w}^{1}(\alpha
)\rightarrow Z_{w}^{1}(\alpha \otimes \alpha )
\end{equation*}%
given by $\widetilde{m}^{\alpha }(u,w)=u\otimes w$ for $u,w\in
Z_{w}^{1}(\alpha )$. Observe that if $u$ is weakly cohomologous to $w$ and $%
u^{\prime }$ is weakly cohomologous to $w^{\prime }$, then $u\otimes w$ is
weakly cohomologous to $u^{\prime }\otimes w^{\prime }$. Therefore, the map $%
\widetilde{m}^{\alpha }$ induces pairing function $m^{\alpha }\colon
H_{w}^{1}(\alpha )\times H_{w}^{1}(\alpha )\rightarrow H_{w}^{1}(\alpha
\otimes \alpha )$.
\end{definition}

Following \cite{tornquist_localized_2011} one can also define the notion of
weak cohomology set \emph{localized} to a subgroup $\Delta $ of $\Lambda $,
as follows. Say that two weak $1$-cocycles $u$ and $u^{\prime }$ for an
action $\alpha\colon \Lambda \rightarrow \mathrm{Aut}( A) $ are $\Delta $-%
\emph{locally weakly cohomologous} if there exists a unitary $v\in U( A) $
such that $u_{\gamma }^{\prime }=v^{\ast }u_{\gamma }\alpha _{\gamma }(v)\ 
\mathrm{mod}\mathbb{C}$ for every $\gamma \in \Delta $. Similarly, $u$ is a $%
\Delta $-\emph{local weak coboundary }if there exists $v\in U( A) $ such
that $u_{\gamma }=v^{\ast }\alpha _{\gamma }( v) \ \mathrm{mod}\mathbb{C}$
for every $\gamma \in \Delta $.

\begin{definition}
\label{Definition:mTheta-relative}The $\Delta $-localized weak cohomology
set $H_{\Delta ,w}^{1}( \alpha ) $ is the quotient of $Z_{w}^{1}( \alpha ) $
by the relation of being $\Delta $-locally weakly cohomologous, endowed with
the pairing function $m_{\Delta }^{\alpha }\colon H_{\Delta ,w}^{1}( \alpha
) \times H_{\Delta ,w}^{1}( \alpha ) \rightarrow H_{\Delta ,w}^{1}( \alpha
\otimes \alpha ) $, given by $m^{\alpha}_{\Delta}( [ u] ,[ u^{\prime }] )= [
u\otimes u^{\prime }]$ for $u ,v \in Z^1_w(\alpha)$.
\end{definition}

Given a weak $1$-cocycle $u$ for an action $\alpha \colon \Lambda
\rightarrow \mathrm{Aut}(A)$, one can define the \emph{cocycle perturbation }%
$\alpha ^{u}\colon \Lambda \rightarrow \mathrm{Aut}(A)$ of $\alpha $ by
setting $\alpha _{\gamma }^{u}=\mathrm{Ad}(u_{\gamma })\circ \alpha _{\gamma
}$ for every $\gamma \in \Lambda $. (The weak cocycle condition implies that 
$\alpha ^{u}$ is also an action.)

\begin{definition}
\label{Definition:conjugacy}Let $\Lambda \ $be a countable discrete group,
and let $\alpha $ and $\beta $ be actions of $\Lambda$ on unital C*-algebras 
$A$ and $B$, respectively.

\begin{enumerate}
\item We say that $\alpha $ and $\beta $ are \emph{conjugate }if there
exists an isomorphism $\psi \colon A\rightarrow B$ such that $\psi \circ
\alpha _{\gamma }=\beta _{\gamma }\circ \psi $ for every $\gamma \in \Lambda 
$,

\item We say that $\alpha $ and $\beta $ are \emph{cocycle conjugate }if $%
\beta $ is conjugate to $\alpha ^{u}$ for some $1$-cocycle $u$ for $\alpha $,

\item We say that $\alpha $ and $\beta $ are \emph{weakly cocycle conjugate }%
if $\beta $ is conjugate to $\alpha ^{u}$ for some weak $1$-cocycle $u$ for $%
\alpha $.
\end{enumerate}
\end{definition}

\begin{remark}
Let $\alpha,\beta\colon \Lambda\to \mathrm{Aut}(A)$ be actions of a discrete
group $\Lambda$ on a C*-algebra $A$. It is easy to check that if $\alpha$
and $\beta$ are (weakly) cocycle conjugate, then there is a canonical
bijection between the $\Delta$-localized (weak) 1-cohomology sets of $\alpha$
and $\beta$, for any subgroup $\Delta$ of $\Lambda$.
\end{remark}

When the actions $\alpha$ and $\beta $ are conjugate, we also say that the $%
\Lambda $-C*-algebras $( A, \alpha) $ and $(B,\beta) $ are \emph{%
equivariantly isomorphic}. An \emph{equivariant unital embedding }from $%
(A,\alpha) $ to $(B,\beta) $ is an injective unital *-homomorphism $\phi
\colon A\rightarrow B$ satisfying $\phi\circ\alpha_{\gamma}=
\beta_{\gamma}\circ\phi$ for every $\gamma \in \Lambda $.

Suppose that $A$ is a unital C*-algebra. A linear functional $\tau $ on $A$
is said to be a \emph{trace} if $\tau (1)=\Vert \tau \Vert =1$ and $\tau
(ab)=\tau (ba)$ for every $a,b\in A$. We let $\mathrm{T}(A)$ be the simplex
of traces on $A$. Suppose that $\tau $ is a trace on $A$, $\theta $ is an
automorphism of $A$, and $\alpha $ is an action of $\Lambda $ on $A$. We say
that $\tau $ is $\theta $-invariant if $\tau \circ \theta =\tau $, and that
it is $\alpha $-invariant if it is $\alpha _{\gamma }$-invariant for every $%
\gamma \in \Lambda $. If $\tau $ is $\alpha $-invariant, then we also say
that $\alpha $ is $\tau $-preserving. We let $\mathrm{T}(A)^{\alpha
}\subseteq \mathrm{T}(A)$ be the closed convex subset of $\alpha $-invariant
traces. Observe that, if $\Lambda $ is amenable, then $\mathrm{T}\left(
A\right) ^{\alpha }$ is nonempty whenever $\mathrm{T}\left( A\right) $ is
nonempty.

For a trace $\tau $ on $A$, consider the corresponding \emph{left regular
representation }$\pi _{\tau }\colon A\rightarrow B(L^{2}(A,\tau ))$ obtained
via the GNS construction.\ We let $\overline{A}^{\tau }$ be the closure of $%
\pi _{\tau }(A)$ inside $B(L^{2}(A,\tau ))$ with respect to the weak
operator topology. We regard $\overline{A}^{\tau }$ as a tracial von Neumann
algebra, endowed with the unique extension of $\tau $ to $\overline{A}^{\tau
}$. The unit ball of $A$ is dense in the unit ball of $\overline{A}^{\tau }$
with respect to the $2$-norm $\Vert a\Vert _{\tau }=\tau (a^{\ast }a)^{1/2}$
defined by $\tau $. If $\alpha $ is a $\tau $-preserving action of $\Lambda $
on $A$, then it induces a canonical action $\overline{\alpha }^{\tau }\colon
\Lambda \rightarrow \mathrm{Aut}(\overline{A}^{\tau },\tau )$.

\begin{notation}
\label{Notation:Bernoulli}As in the case of actions on tracial von Neumann
algebras, given a unital C*-algebra $A$, we denote by $\beta _{\Lambda
\curvearrowright X,A}:\Lambda \rightarrow \mathrm{Aut}(A^{\otimes X})$ the
Bernoulli $\left( \Lambda \curvearrowright X\right) $-action with base $A$
induced by an action $\Lambda \curvearrowright X$ of a countable discrete
group $\Lambda $ on a countable set $X$.
\end{notation}

The following lemma is well known, and we will use it without further
reference.

\begin{lemma}
\label{lma:TensProdWkClos} Let $A$ be a C*-algebra, let $\Lambda $ be a
countable discrete group, let $\Lambda \curvearrowright X$ be an action of $%
\Lambda $ on a countable set $X$, and let $\tau _{0}$ be a trace on $A$.
Denote by $\tau $ the trace $\tau _{0}^{\otimes X}$ on $A^{\otimes X}$. Then
the action $\overline{\beta }_{\Lambda \curvearrowright X,A}^{\tau }$ is
conjugate to the von Neumann-algebraic Bernoulli action $\beta _{\Lambda
\curvearrowright X,\overline{A}^{\tau _{0}}}$.
\end{lemma}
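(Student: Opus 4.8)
The plan is to realize both actions on one and the same Hilbert space and then observe that they coincide there, so that the conjugacy is implemented by a single canonical unitary. Write $N:=(\overline{A}^{\tau_0},\tau_0)^{\overline{\otimes}X}$ for the von Neumann-algebraic Bernoulli algebra, acting in standard form on its GNS space $H:=L^2(N,\tau_0)$, which is the infinite tensor product of the spaces $L^2(A,\tau_0)$ taken with respect to the trace vectors $\widehat{1}$. The first step is to produce a canonical unitary $U\colon L^2(A^{\otimes X},\tau)\to H$ determined by $U\widehat{a}=\bigotimes_{x\in S}\widehat{a_x}$ (with the trace vector $\widehat{1}$ in the remaining coordinates) whenever $a=\bigotimes_{x\in S}a_x$ for a finite $S\subseteq X$ and $a_x\in A$. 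That this is a well-defined isometry follows from the multiplicativity of the tensor trace: for two elementary tensors supported on a common finite set $S$, the inner product in $L^2(A^{\otimes X},\tau)$ factors as $\prod_{x\in S}\tau_0(b_x^{\ast}a_x)$, which is exactly the inner product of the corresponding vectors in $H$. Since elementary tensors span dense subspaces on both sides, $U$ extends to a unitary.

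Second, I would check that $U$ intertwines the GNS representation $\pi_\tau$ of $A^{\otimes X}$ with the restriction to $A^{\otimes X}$ of the standard representation of $N$. It suffices to verify this on elementary tensors, where both $\pi_\tau(\bigotimes_{x\in S}a_x)$ and the left action of $\bigotimes_{x\in S}a_x\in A^{\otimes S}\subseteq N$ act by left multiplication; hence they agree after conjugation by $U$. Consequently $U\pi_\tau(A^{\otimes X})U^{\ast}$ is precisely the copy of $A^{\otimes X}$ sitting inside $N$ as a $\ast$-subalgebra.

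The third step is to identify the weak closures. The algebraic tensor product $A^{\odot S}$, which lies inside both $A^{\otimes S}$ and the finite von Neumann tensor product $(\overline{A}^{\tau_0},\tau_0)^{\overline{\otimes}S}$, is $\sigma$-weakly dense in the latter; this is the standard consequence of Kaplansky density together with the $2$-norm density of the unit ball of $A$ in that of $\overline{A}^{\tau_0}$ recorded just before the lemma. Since $A^{\odot S}\subseteq A^{\otimes S}$, the minimal C*-tensor product $A^{\otimes S}$ is weakly dense in $(\overline{A}^{\tau_0},\tau_0)^{\overline{\otimes}S}$ as well. Taking the union over finite $S\subseteq X$ and passing to the weak closure, and recalling that $N$ is by definition the weak closure of $\bigcup_{S}(\overline{A}^{\tau_0},\tau_0)^{\overline{\otimes}S}$, I conclude that $A^{\otimes X}$ is weakly dense in $N$. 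Therefore $U\,\overline{A^{\otimes X}}^{\tau}\,U^{\ast}=N$, and $U$ implements an isomorphism $\overline{A^{\otimes X}}^{\tau}\cong N$ of tracial von Neumann algebras preserving the traces.

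Finally, for the equivariance, both Bernoulli actions are defined by permuting the tensor factors according to $\Lambda\curvearrowright X$, so on the weakly dense $\ast$-subalgebra $A^{\otimes X}$ the induced action $\overline{\beta}^{\tau}_{\Lambda\curvearrowright X,A}$ and the action $\beta_{\Lambda\curvearrowright X,\overline{A}^{\tau_0}}$ agree after transport by $U$. Since automorphisms of von Neumann algebras are automatically $\sigma$-weakly continuous, agreement on a weakly dense $\ast$-subalgebra forces agreement everywhere, so $U$ is a conjugacy. The only genuinely delicate point is the Hilbert-space bookkeeping of the first two steps, namely making the infinite tensor product identification precise and confirming that the GNS representation of the \emph{minimal} C*-tensor product is nothing but the restriction of the standard representation of the von Neumann tensor product; but by $\sigma$-weak continuity and the direct-limit definition of $A^{\otimes X}$ this reduces to the elementary-tensor computation above.
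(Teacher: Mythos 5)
Your proof is correct. Note that the paper does not actually prove this lemma: it is stated as ``well known'' and used without further reference, so your argument fills in exactly the kind of folklore verification the authors omit, and it is the standard one (canonical unitary between the GNS space of $(A^{\otimes X},\tau)$ and the infinite tensor product Hilbert space $\bigotimes_{x\in X}(L^{2}(A,\tau_{0}),\widehat{1})$, intertwining checked on elementary tensors, weak closures identified by Kaplansky density, and equivariance propagated from the weakly dense subalgebra by normality of von Neumann algebra automorphisms). The only imprecision worth flagging is that $\tau_{0}$ need not be faithful on $A$, so $\pi_{\tau_{0}}$ and $\pi_{\tau}$ may have nontrivial kernels; consequently $U\pi_{\tau}(A^{\otimes X})U^{\ast}$ is in general a quotient of $A^{\otimes X}$ rather than ``precisely the copy of $A^{\otimes X}$ sitting inside $N$,'' and $A^{\odot S}$ need not embed into $(\overline{A}^{\tau_{0}},\tau_{0})^{\overline{\otimes}S}$. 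This is harmless: every step of your argument (the inner-product computation giving unitarity of $U$, the agreement of the two bounded operators attached to an elementary tensor on a dense set of vectors, the density of $\pi_{\tau_{0}}(A)^{\odot X}$ in $N$, and the normality argument at the end) uses only the images under the GNS representations and never injectivity, so the conjugacy $U\,\overline{\beta}^{\tau}_{\Lambda\curvearrowright X,A}\,U^{\ast}=\beta_{\Lambda\curvearrowright X,\overline{A}^{\tau_{0}}}$ goes through verbatim.
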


\begin{definition}
\label{Definition:tau-conjugacy} Let $\Lambda $ be a discrete group, let $A$
be a unital C*-algebra, and let $\tau \in \mathrm{T}(A)$.

\begin{enumerate}
\item We say that $\alpha $ is \emph{strongly outer} if for every $\gamma
\in \Lambda \setminus \{1\}$ and for every $\alpha _{\gamma }$-invariant
trace $\sigma $ on $A$, the weak extension $\overline{\alpha }_{\gamma
}^{\sigma }$ is outer \cite[Definition 2.7]{matui_stability_2012};

\item If $\alpha $ is $\tau $-preserving, then we say that $\alpha $ is (%
\emph{weakly})\emph{\ }$\tau $\emph{-mixing} if $\overline{\alpha }^{\tau
}\colon \Lambda \rightarrow \mathrm{Aut}(\overline{A}^{\tau },\tau )$ is
(weakly) mixing in the sense of Definition \ref{Definition:wm}.

\item We say that two $\tau $-preserving actions $\alpha ,\beta \colon
\Lambda \rightarrow \mathrm{Aut}\left( A\right) $ are $\tau $-\emph{conjugate%
} (respectively, \emph{cocycle }$\tau $-\emph{conjugate}, or \emph{weakly
cocycle }$\tau $-\emph{conjugate}), if $\overline{\alpha }^{\tau }$ and $%
\overline{\beta }^{\tau }$ are conjugate (respectively, \emph{cocycle} \emph{%
conjugate}, or \emph{weakly cocycle} \emph{conjugate}), in the sense of
Definition \ref{Definition:conjugacy}.
\end{enumerate}
\end{definition}

\begin{remark}
Suppose that $\alpha $ is an automorphism of a C*-algebra $A$, $\sigma $ is
an $\alpha $-invariant trace, $\overline{\alpha }$ is the canonical
extension of $\alpha $ to $\overline{A}^{\sigma }$, and $p\in \overline{A}%
^{\sigma }$ is a $\overline{\alpha }$-invariant central projection. Then
defining $\tau (x) =\sigma (px)/\sigma(p)$ gives an $\alpha $-invariant 
(normalized) trace on $A$,
such that the canonical extension of $\alpha $ to $\overline{A}^{\tau }$ can
be identified with the restriction of $\overline{\alpha }$ to $p\overline{A}%
^{\sigma }$. In view of this and Remark \ref{Remark:center}, one can
equivalently replace \textquotedblleft outer\textquotedblright\ with
\textquotedblleft properly outer\textquotedblright\ in the definition of
strongly outer action. We will tacitly use this fact in the rest of the
paper.
\end{remark}

The notion of strongly action from Definition \ref{Definition:tau-conjugacy}
recovers the notion of free action on a locally compact Hausdorff space when
one considers actions on commutative C*-algebras, as the next proposition
shows.

\begin{proposition}
Let $\Lambda \curvearrowright X$ be a topological action of a discrete group 
$\Lambda $ on a locally compact Hausdorff space $X$, and denote by $\alpha
\colon \Lambda \rightarrow \mathrm{Aut}(C_{0}(X))$ the induced action. Then $%
\alpha $ is strongly outer if and only if $\Lambda \curvearrowright X$ is
free.
\end{proposition}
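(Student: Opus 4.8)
The plan is to translate everything into measure-theoretic language via Gelfand duality and then exploit the fact that a commutative von Neumann algebra has no nontrivial inner automorphisms. Recall that automorphisms of $C_0(X)$ are induced by homeomorphisms, so $\alpha_\gamma(f)=f\circ\gamma^{-1}$, and that traces on $C_0(X)$ are exactly the states, which by the Riesz representation theorem correspond to Radon probability measures $\mu$ on $X$; such a trace $\sigma$ is $\alpha_\gamma$-invariant precisely when $\mu$ is $\gamma$-invariant. For such a $\mu$, the GNS construction identifies $\overline{C_0(X)}^{\sigma}$ with $L^\infty(X,\mu)$, and the weak extension $\overline{\alpha}_\gamma^{\sigma}$ with the automorphism $f\mapsto f\circ\gamma^{-1}$ of $L^\infty(X,\mu)$. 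Since $L^\infty(X,\mu)$ is commutative, every unitary is central, so the only inner automorphism is the identity; consequently $\overline{\alpha}_\gamma^{\sigma}$ is outer if and only if it is \emph{not} the identity of $L^\infty(X,\mu)$.

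The technical heart of the argument is the claim that $\overline{\alpha}_\gamma^{\sigma}=\mathrm{id}$ on $L^\infty(X,\mu)$ if and only if $\mathrm{supp}(\mu)$ is contained in the fixed-point set $\mathrm{Fix}(\gamma)=\{x\in X : \gamma x = x\}$. First I would prove the forward implication: if $f\circ\gamma^{-1}=f$ in $L^\infty(X,\mu)$ for every $f\in C_0(X)$, then for each such $f$ the continuous function $f\circ\gamma^{-1}-f$ vanishes $\mu$-almost everywhere, and hence vanishes on all of $\mathrm{supp}(\mu)$, since the open set on which it is nonzero would otherwise meet $\mathrm{supp}(\mu)$ and therefore have positive measure. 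As $C_0(X)$ separates the points of the locally compact Hausdorff space $X$ (Urysohn), this forces $\gamma x = x$ for every $x\in\mathrm{supp}(\mu)$. The reverse implication is immediate: if $\mathrm{supp}(\mu)\subseteq\mathrm{Fix}(\gamma)$, then $f\circ\gamma^{-1}$ and $f$ agree on $\mathrm{supp}(\mu)$, hence $\mu$-almost everywhere. Working with supports rather than null sets is precisely what lets me avoid any separability hypothesis on $X$.

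With the claim in hand both directions are short. For \textquotedblleft free $\Rightarrow$ strongly outer\textquotedblright, fix $\gamma\neq 1$ and a $\gamma$-invariant trace $\sigma\leftrightarrow\mu$; since $\mathrm{Fix}(\gamma)=\emptyset$ by freeness while $\mathrm{supp}(\mu)\neq\emptyset$, the claim rules out $\overline{\alpha}_\gamma^{\sigma}=\mathrm{id}$, so $\overline{\alpha}_\gamma^{\sigma}$ is outer and $\alpha$ is strongly outer. For the converse I argue by contraposition: if the action is not free, choose $\gamma\neq 1$ and $x_0$ with $\gamma x_0 = x_0$, and take $\sigma$ to be the evaluation at $x_0$, a $\gamma$-invariant trace with $\mu=\delta_{x_0}$. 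Then $\mathrm{supp}(\mu)=\{x_0\}\subseteq\mathrm{Fix}(\gamma)$, so by the claim $\overline{\alpha}_\gamma^{\sigma}$ is the identity of $L^\infty(X,\delta_{x_0})=\mathbb{C}$, hence inner; thus $\alpha$ fails to be strongly outer.

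I expect the only delicate point to be the bookkeeping in the support argument---making sure that the passage from \textquotedblleft vanishes $\mu$-a.e.\textquotedblright\ to \textquotedblleft vanishes on $\mathrm{supp}(\mu)$\textquotedblright\ is carried out for each $f$ separately, so that no uncountable union of null sets is ever invoked---together with recording carefully the standard identifications of traces with measures and of $\overline{C_0(X)}^{\sigma}$ with $L^\infty(X,\mu)$. Everything else is routine.
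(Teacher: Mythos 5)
Your proof is correct and follows essentially the same route as the paper's: both directions rest on identifying invariant traces with invariant Radon probability measures, using a Dirac measure at a fixed point to defeat strong outerness when the action is not free, and using the fact that inner automorphisms of the commutative algebra $L^\infty(X,\mu)$ are trivial for the converse. Your support argument is simply a carefully spelled-out version of the paper's assertion that triviality of $\overline{\alpha}_\gamma^{\sigma}$ forces the fixed-point set of $\gamma$ to have full $\mu$-measure.
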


\begin{proof}
Suppose that $\alpha $ is strongly outer, and let $\gamma \in \Lambda
\setminus \{1\}$. To reach a contradiction, assume that there exists $x\in X$
such that $\gamma \cdot x=x$. Then the Dirac probability measure
concentrated on $\{x\}$ is Borel and $\gamma $-invariant. This measure
induces, via integration, an $\alpha _{\gamma }$-invariant trace $\tau _{x}$
on $C_{0}(X)$. Since $\overline{C_{0}(X)}^{\tau _{x}}$ is isomorphic to $%
\mathbb{C}$, the weak extension of $\alpha _{\gamma }$ cannot be outer. This
contradiction implies that $\Lambda \curvearrowright X$ is free.

Conversely, assume that $\Lambda \curvearrowright X$ is free and let $\gamma
\in \Lambda \setminus \{1\}$. Let $\tau $ be an $\alpha _{\gamma }$%
-invariant trace on $C_{0}(X)$. Then $\tau $ is given by integration with
respect to a Borel probability measure $\mu $ on $X$ which satisfies $\mu
(\gamma \cdot U)=\mu (U)$ for every open subset $U\subseteq X$. Moreover, $%
\overline{C_{0}(X)}^{\tau }$ is isomorphic to $L^{\infty }(X,\mu )$.
Suppose, to reach a contradiction, that $\overline{\alpha }_{\gamma }$ is
inner (and hence trivial). It follows that the set of fixed point of $\alpha
_{\gamma }$ has $\mu $-measure $1$ and, in particular, it is nonempty. This
is a contradiction.
\end{proof}

Let $A$ be a C*-algebra. Then $\mathrm{Aut}(A)$ is a topological group when
endowed with the topology of pointwise convergence. An action of a
topological group $G$ on $A$ is said to be \emph{continuous }if it is
continuous as a group homomorphism $G\rightarrow \mathrm{Aut}(A)$. \emph{In
the following, all the actions of topological groups are supposed to be
continuous}. The following is a natural example of a continuous action:

\begin{notation}
\label{nota:Lt} For a compact group $G$, let $C(G) $ be the commutative
C*-algebra of continuous complex-valued functions on $G$. We denote by $%
\mathtt{Lt}^{G}\colon G\rightarrow \mathrm{Aut}(C(G))$ the canonical action
by left translation, given by $\mathtt{Lt}_{g}^{G}(f)(h)=f(g^{-1}h)$ for $%
g,h\in G$ and $f\in C(G)$. When the group $G$ is clear from the context, we
write $\mathtt{Lt}$ instead of $\mathtt{Lt}^{G} $ to lighten the notation.
\end{notation}

We recall the definition of the Rokhlin property for compact group actions
on unital C*-algebras from \cite[Definition 3.2]{hirshberg_rokhlin_2007}.
The formulation given here is taken from \cite[Lemma~3.7]%
{gardella_rokhlin_2014}.

\begin{definition}
\label{Definition:Rokhlin} Let $G$ be a compact group, let $A$ be a unital
C*-algebra, and let $\alpha\colon G\to\mathrm{Aut}(A)$ be an action. We say
that $\alpha$ has the \emph{Rokhlin property} if for every $\varepsilon >0$,
for every finite subset $S\subseteq C( G) $, and every finite subset $%
F\subseteq A$, there exists a unital completely positive linear map $\psi
\colon C( G) \to A$ satisfying

\begin{itemize}
\item $\| ( \psi \circ \mathtt{Lt}_{g})(f)-(\alpha _{g}\circ \psi ) ( f)
\|<\varepsilon $ for all $f\in S$ and all $g\in G$;

\item $\| \psi ( f) a-a\psi (f) \| <\varepsilon $ for all $f\in S$ and all $%
a\in F$;

\item $\Vert \psi (f_{0}f_{1})-\psi (f_{0})\psi (f_{1})\Vert <\varepsilon $
for any $f,f_{0},f_{1}\in S$.
\end{itemize}
\end{definition}

\subsection{Direct and inverse limit constructions.\label{Sbs:DirInvLim}}

\begin{definition}
An\emph{\ inverse system of topological groups} is a family $(G_{i}, \pi
_{i,j})_{i,j\in I}$, where $I$ is an ordered set, $G_{i}$ are topological
groups, and $\pi _{i,j}\colon G_{j}\rightarrow G_{i}$, for $i\leq j$, is a
surjective continuous group homomorphism. Given such a countable inverse
system, we denote by $G=\varprojlim (G_{i}, \pi _{i,j})$ the inverse limit,
together with the canonical continuous surjective group homomorphisms $\pi
_{i, \infty }\colon G\rightarrow G_{i}$ for $i\in I$.

Similarly, a\emph{\ direct system of unital C*-algebras} is a family $%
(A_{i}, \iota _{i,j})_{i,j\in I}$, where $I$ is an ordered set, $A_{i}$ is a
unital C*-algebra, and $\iota _{i,j}\colon A_{i}\rightarrow A_{j}$, for $%
i\leq j$, is an injective unital *-homomorphism. We denote by $A=\varinjlim
(A_{i}, \iota _{i,j})$ the corresponding direct limit, together with the
canonical injective unital *-homomorphisms $\iota _{i, \infty }\colon
A_{i}\rightarrow A$ for $i\in I$.
\end{definition}

Next, we will see that one can construct actions of inverse limits of groups
on direct limits of C*-algebras in a natural way.

\begin{lemma}
\label{inverse limit action} Let $I$ be an ordered set, let $( A_{i,}\iota
_{i,j}) _{i,j\in I}$ be a direct system of unital C*-algebras with limit $A$%
, and let $( G_{i}, \pi _{i,j}) _{i,j\in I}$ be an inverse system of
topological groups with limit $G$. For every $i\in I$, let $\alpha
^{(i)}\colon G_{i}\to \mathrm{Aut}(A_{i})$ be an action satisfying 
\begin{equation}
\alpha _{g}^{(j)}\circ \iota _{i,j}=\iota _{i,j}\circ \alpha _{\pi _{i,j}(
g) }^{(i)}\text{\label{Equation:compatible}}
\end{equation}
for every $i,j\in I$ with $i\leq j$ and every $g\in G_{j}$. Then there
exists a unique action $\alpha \colon G\to\mathrm{Aut}(A)$ such that 
\begin{equation}
\alpha _{g}\circ \iota _{i, \infty }=\iota _{i, \infty }\circ \alpha _{\pi
_{i, \infty }( g) }^{(i)}\text{\label{Equation:limit-action}}
\end{equation}
for every $i\in I$ and $g\in G$.
\end{lemma}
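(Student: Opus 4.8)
The plan is to define the limit action on the dense $*$-subalgebra $A_\infty := \bigcup_{i \in I} \iota_{i,\infty}(A_i)$ of $A$ by the only formula compatible with \eqref{Equation:limit-action}, and then extend it by continuity. Concretely, for $g \in G$ and an element $a = \iota_{i,\infty}(a_i)$ with $a_i \in A_i$, I would set
\[
\alpha_g(a) = \iota_{i,\infty}\!\left(\alpha^{(i)}_{\pi_{i,\infty}(g)}(a_i)\right).
\]
With $g$ fixed, the first task is to check that this is independent of the index $i$ and of the chosen preimage $a_i$.

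The well-definedness is the only step with genuine content, and I expect it to be the main obstacle. Suppose $\iota_{i,\infty}(a_i) = \iota_{j,\infty}(a_j)$. Since the connecting maps are injective and $I$ is directed, I may pick $k \geq i,j$ for which $\iota_{i,k}(a_i) = \iota_{j,k}(a_j)$. Writing $h = \pi_{k,\infty}(g) \in G_k$ and using the inverse-system identity $\pi_{i,k} \circ \pi_{k,\infty} = \pi_{i,\infty}$, the compatibility hypothesis \eqref{Equation:compatible} gives
\[
\alpha^{(k)}_{h}\!\left(\iota_{i,k}(a_i)\right) = \iota_{i,k}\!\left(\alpha^{(i)}_{\pi_{i,\infty}(g)}(a_i)\right),
\]
and likewise with $j$ in place of $i$. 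Applying $\iota_{k,\infty}$ to both sides and using $\iota_{k,\infty} \circ \iota_{i,k} = \iota_{i,\infty}$, the two candidate values for $\alpha_g(a)$ are both seen to equal $\iota_{k,\infty}\!\left(\alpha^{(k)}_h(\iota_{i,k}(a_i))\right)$, hence coincide.

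Granting well-definedness, the remaining verifications are routine. On each $\iota_{i,\infty}(A_i)$ the map $\alpha_g$ agrees with $\iota_{i,\infty} \circ \alpha^{(i)}_{\pi_{i,\infty}(g)}$, a composite of $*$-homomorphisms, so $\alpha_g$ is an isometric $*$-homomorphism on $A_\infty$ and extends uniquely to an isometric $*$-endomorphism of $A$. Since each $\alpha^{(i)}$ is an action and each $\pi_{i,\infty}$ is a group homomorphism, one checks on $A_\infty$ that $\alpha_g \circ \alpha_{g'} = \alpha_{gg'}$ and $\alpha_e = \mathrm{id}_A$; in particular $\alpha_{g^{-1}}$ inverts $\alpha_g$, so each $\alpha_g \in \mathrm{Aut}(A)$ and $g \mapsto \alpha_g$ is a homomorphism. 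For continuity in the point-norm topology, I would note that for $a \in A_\infty$ the map $g \mapsto \alpha_g(a)$ is continuous, being a composite of the continuous map $\pi_{i,\infty}$, the continuous action $\alpha^{(i)}$, and the isometry $\iota_{i,\infty}$; since $\alpha_g$ is isometric for every $g$, approximating an arbitrary $a \in A$ within $\varepsilon$ by an element of $A_\infty$ exhibits $g \mapsto \alpha_g(a)$ as a uniform limit of continuous functions, hence continuous. Finally, uniqueness is immediate: any action satisfying \eqref{Equation:limit-action} must agree with the displayed formula on the dense subalgebra $A_\infty$, and two continuous maps into $A$ agreeing on a dense set coincide.
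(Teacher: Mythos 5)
Your proposal is correct and follows essentially the same route as the paper: define $\alpha$ on the dense union $\bigcup_{i\in I}\iota_{i,\infty}(A_i)$ by the formula forced by the compatibility condition, and deduce continuity of $g\mapsto\alpha_g(a)$ from continuity of the $\alpha^{(i)}$ together with a density/isometry approximation. The paper dismisses the algebraic part (well-definedness, group laws, uniqueness) as clear and only writes out the continuity estimate, so your proof is simply a more detailed version of the same argument, and your well-definedness verification via a common index $k\geq i,j$ is valid since the connecting maps are injective, hence isometric.
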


\begin{proof}
It is clear that Equation (\ref{Equation:limit-action}) defines a unique
action of $G$ on $A$ in view of Equation (\ref{Equation:compatible}). We
check that such an action is continuous. For every $i\in I$, we identify $%
A_{i}$ with its image under $\iota _{i, \infty }$.

Fix $\varepsilon >0$, let $i\in I$ and let $F\subseteq A_i$ be a finite
subset. Since $\alpha ^{(i)}$ is continuous, there exists a neighborhood $U$
of the identity of $G_{i}$ such that $\|\alpha _{g}^{(i)}(x)-x\|<\varepsilon 
$ for every $x\in F$ and $g\in U$. Set $V=\pi_{i, \infty}^{-1}(U)$, which is
a neighborhood of the identity of $G$. For every $g\in V$ and $x\in F$, we
have 
\begin{equation*}
\left\Vert (\alpha _{g}^{(i)}\circ \iota _{i, \infty })(x)-\iota _{i, \infty
}(x)\right\Vert \leq \left\Vert \alpha _{\pi _{i, \infty }( g)
}^{(i)}(x)-x\right\Vert \leq \varepsilon \text{.}
\end{equation*}%
Since $i\in I$ is arbitrary and $\bigcup_{i\in I}\iota_{i,\infty}(A_i)$ is
dense in $A$, this concludes the proof.
\end{proof}

The definition of an amenable trace on a unital C*-algebra $A$ can be found
in \cite[Definition 6.2.1]{brown_c*-algebras_2008}. Observe that the set $%
\mathrm{T}_{\mathrm{am}}( A) $ of amenable traces on $A$ is a face of the
simplex $\mathrm{T}( A) $ of traces on $A$. Particularly, any extreme point
of $\mathrm{T}_{\mathrm{am}}( A) $ is also an extreme point of $\mathrm{T}(
A) $.\ Recall that every trace on a nuclear C*-algebra is amenable \cite[%
Theorem 4.2.1]{brown_invariant_2006}. The notion of locally reflexive
C*-algebra can be found in \cite[Definition 4.3.1]{brown_invariant_2006}.
Every exact C*-algebra is locally reflexive \cite[Corollary 9.4.1]%
{brown_c*-algebras_2008}. The following result is folklore, and we thank
Stuart White for suggesting this formulation.

\begin{lemma}
\label{lemma:WeakClosureR} Let $A$ be a separable, locally reflexive
C*-algebra, and let $\tau $ be a nonzero trace on $A$. Then $\overline{A}%
^{\tau }$ is isomorphic to the hyperfinite II$_{1}$-factor with separable
predual $R$ if and only if $\tau $ is amenable and extreme, and $A$ is
infinite dimensional.
\end{lemma}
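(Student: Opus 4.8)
The plan is to prove both directions via the standard characterization of $R$ as the unique hyperfinite (equivalently, injective/amenable) II$_1$-factor with separable predual, combined with the translation between amenability/local-reflexivity at the C*-level and injectivity/semidiscreteness at the von Neumann level. Throughout I would work with the GNS von Neumann algebra $\overline{A}^{\tau}$ and its canonical extension of $\tau$, which is a faithful normal tracial state; separability of the predual follows immediately from separability of $A$, since the image of $A$ is $2$-norm dense in the unit ball of $\overline{A}^{\tau}$.

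For the forward direction, suppose $\overline{A}^{\tau}\cong R$. Then $\overline{A}^{\tau}$ is a factor, so its center is trivial; this forces $\tau$ to be an extreme point of $\mathrm{T}(A)$, since any nontrivial convex decomposition of $\tau$ would produce a nontrivial central projection in $\overline{A}^{\tau}$ via the usual correspondence between traces dominated by $\tau$ and positive central elements. Moreover $R$ is infinite dimensional, so $A$ cannot be finite dimensional. Finally, $R$ is injective, hence by Connes' theorem semidiscrete; I would then invoke the known equivalence (for a locally reflexive $A$) between injectivity of $\overline{A}^{\tau}$ and amenability of the trace $\tau$ — this is precisely the content of the characterization in \cite[Chapter 6]{brown_c*-algebras_2008} relating amenable traces to hypertraces and to injectivity of the weak closure — to conclude that $\tau$ is amenable.

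For the converse, suppose $\tau$ is amenable and extreme and $A$ is infinite dimensional. Extremality of $\tau$ together with the center-valued trace argument shows that $\overline{A}^{\tau}$ has trivial center, i.e. it is a factor; since $\tau$ is a faithful normal \emph{finite} trace and $A$ (hence $\overline{A}^{\tau}$) is infinite dimensional, the factor is of type II$_1$ with separable predual. Amenability of $\tau$, again through the local-reflexivity hypothesis and the equivalence cited above, yields injectivity of $\overline{A}^{\tau}$. By Connes' uniqueness theorem for the injective II$_1$-factor, $\overline{A}^{\tau}\cong R$. The one point requiring care is ruling out that $\overline{A}^{\tau}$ is finite dimensional or of type I: extremality alone gives a factor, and I must use that an amenable extreme trace on an infinite-dimensional C*-algebra produces an \emph{infinite-dimensional} factor, which then cannot be type I$_n$ with a finite trace unless it is a matrix algebra — excluded because a type I$_n$ factor has a unique trace and finite dimension.

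The main obstacle is pinning down the precise equivalence \textquotedblleft$\tau$ amenable $\iff$ $\overline{A}^{\tau}$ injective\textquotedblright\ in the presence of local reflexivity, and in particular verifying that extremality of the trace (rather than merely factoriality assumed a priori) suffices to force the weak closure to be a factor rather than a finite direct sum or more complicated algebra. This is where I would lean on the hypertrace characterization of amenability and on local reflexivity to transfer completely positive approximations from $A$ to $\overline{A}^{\tau}$, so that Connes' theorem applies; the remaining steps are essentially bookkeeping with the GNS construction.
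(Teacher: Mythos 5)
Your strategy coincides with the paper's: extreme trace $\Leftrightarrow$ factorial weak closure, Brown's theorem that an amenable trace on a locally reflexive algebra has hyperfinite (injective) weak closure, and Connes' uniqueness of the injective II$_{1}$ factor with separable predual. The forward direction is correct as you write it; in fact the implication ``$\overline{A}^{\tau}$ injective $\Rightarrow$ $\tau$ amenable'' needs no local reflexivity at all, since composing a conditional expectation of $B(L^{2}(A,\tau))$ onto $\overline{A}^{\tau}$ with the extended trace produces a hypertrace.

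The converse, however, has a genuine gap, and it sits exactly at the point you flag as ``requiring care''. You claim that $A$ infinite dimensional implies $\overline{A}^{\tau}$ infinite dimensional, and later that ``an amenable extreme trace on an infinite-dimensional C*-algebra produces an infinite-dimensional factor''. That implication is precisely what needs proof, and as stated it is false: take $A=C([0,1])$ and $\tau =\mathrm{ev}_{0}$, evaluation at a point. Then $A$ is separable, nuclear (hence locally reflexive) and infinite dimensional; $\tau$ is amenable (every trace on a nuclear C*-algebra is amenable) and extreme (Dirac measures are extreme points of $\mathrm{T}(C([0,1]))$); yet $L^{2}(A,\tau)\cong \mathbb{C}$, so $\overline{A}^{\tau}\cong \mathbb{C}$ is a type I$_{1}$ factor, not $R$. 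Your proposed exclusion of the type I$_{n}$ case (``a type I$_{n}$ factor has a unique trace and finite dimension'') rules out nothing: nothing you have established is contradicted by the weak closure being a finite-dimensional factor. What is actually needed is that $\pi_{\tau}(A)$ itself be infinite dimensional---which holds, for instance, when $\tau$ is faithful (so that $\pi_{\tau}$ is injective), or when $A$ is simple or $M_{p^{\infty}}$-absorbing, as in every application of the lemma in this paper. To be fair, the paper's own proof makes the same unjustified leap (``since $A$ is infinite dimensional, $\overline{A}^{\tau}$ must be isomorphic to $R$ by its uniqueness''), so you have reproduced its argument including its weak point; but a self-contained proof must either supply this missing step under an additional hypothesis guaranteeing that $\pi_{\tau}(A)$ is infinite dimensional, or acknowledge that the statement, read literally, fails for commutative $A$.
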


\begin{proof}
It is well known that a trace is extreme if and only if $\overline{A}^{\tau
} $ is a factor (in which case it will be of type II$_{1}$ or I$_{n}$ for
some $n\in \mathbb{N}$). If $\tau $ is amenable, then $\overline{A}^{\tau }$
is hyperfinite by \cite[Corollary~4.3.4]{brown_invariant_2006}, because $A$
is locally reflexive. Finally, since $A$ is infinite dimensional, $\overline{%
A}^{\tau }$ must be isomorphic to $R$ by its uniqueness. Conversely, assume
that $\overline{A}^{\tau }\cong R$. Since the trace on $R$ is amenable, its
restriction to $A$, which agrees with $\tau $, must also be amenable.
Infinite dimensionality of $A$ is clear, so the proof is complete.
\end{proof}

\subsection{Subgroups with relative property (T)}

In this subsection, we recall the definition of relative property (T) for a
subgroup $\Delta $ of a discrete group $\Lambda $.

\begin{definition}
\label{df:RelpropT} Let $\Lambda $ be a discrete group and let $\Delta $ be
a subgroup. We say that $\Delta $ has \emph{relative property }(T) (of
Kazhdan--Margulis), if there exist a finite subset $F\subseteq \Lambda $ and 
$\varepsilon >0$ such that whenever $u\colon \Lambda \rightarrow U(H)$ is a
unitary representation of $\Lambda $ on a Hilbert space $H$, and $\xi \in H$
is a unit vector satisfying $\Vert u_{\gamma }(\xi )-\xi \Vert <\varepsilon $
for all $\gamma \in F$, then $H$ has a nonzero vector which is fixed by the
restriction of $u$ to $\Delta $.
\end{definition}

For $\Lambda =\Delta $, the definition above recovers the notion of property
(T) group. More generally, it is clear that if either $\Lambda $ \emph{or} $%
\Delta $ has property (T), then $\Delta \subseteq \Lambda $ has relative
property (T). There also exist inclusions of groups with relative property
(T), for which neither the subgroup nor the containing group have property
(T). One such example is $\mathbb{Z}^{2}\subseteq \mathbb{Z}^{2}\rtimes 
\mathrm{SL}_{2}(\mathbb{Z})$. (One can also replace SL$_{2}(\mathbb{Z})$
with any of its nonamenable subgroups, by a result of Burger.) Subgroups
with relative property (T) have been studied, among others, by Margulis~\cite%
{margulis_finitely-additive_1982}, Burger~\cite{burger_kazhdan_1991}, and
Jolissaint~\cite{jolissaint_property_2005}.

\section{Model action for profinite abelian groups\label{Section:model}}

\subsection{Profinite groups}

Let $\mathcal{C}$ be a class of groups closed under quotients, finite
products, and subgroups. A \emph{pro-}$\mathcal{C} $ group is a topological
group $G$ that can be realized as the inverse limit of groups from $\mathcal{%
C}$. Particularly, a group $G$ is said to be

\begin{itemize}
\item \emph{profinite }if it is pro-$\mathcal{C}$ for the class $\mathcal{C}$
of finite groups;

\item $\emph{pro}$-$p$ if it is a pro-$\mathcal{C}$ for the class $\mathcal{C%
}$ of finite $p$-groups.
\end{itemize}

It is clear that a profinite group is abelian if and only if it is pro-$%
\mathcal{C}$ for the class $\mathcal{C}$ of finite abelian groups.
Similarly, a pro-$p$ group is abelian if and only if it is pro-$\mathcal{C}$
for the class of finite abelian $p$-groups. Equivalent characterizations of
pro-$\mathcal{C}$ groups can be found in \cite[Theorem 2.1.3]%
{ribes_profinite_2010}. In particular, these characterizations show that a
topological group is profinite if and only if it is totally disconnected, if
and only if the identity of $G$ has a basis of neighborhoods made of open
subgroups \cite[Theorem 2.1.3]{ribes_profinite_2010}. Recall that, by \cite[%
Lemma 2.1.2]{ribes_profinite_2010}, a subgroup of a profinite abelian group
is open if and only if it is closed and has finite index.

Denote by $\mathcal{P}$ the set of prime numbers. A \emph{supernatural number%
} is a function $\boldsymbol{n}\colon \mathcal{P}\rightarrow \{0,1,2,\ldots
,\infty \}$. Recall also that a separable UHF-algebra is a unital C*-algebra
that is obtained as the direct limit of a countable direct system of full
matrix algebras. By a fundamental result of Glimm, any separable UHF-algebra
has the form $\otimes _{p\in \mathcal{P}}M_{p}^{\otimes \boldsymbol{n}(p)}$
for some supernatural number $\boldsymbol{n}$. The supernatural number can
be obtained intrinsically from the given separable UHF-algebra, and it is a
complete invariant for separable UHF-algebras up to *-isomorphism. Next, we
associate to each second countable profinite group, a canonical supernatural
number.

\begin{definition}
\label{Definition:supernatural} Let $G$ be a profinite group. The \emph{%
supernatural number associated with $G$} is defined by%
\begin{equation*}
\boldsymbol{n}_{G}(p)=\left\{ 
\begin{array}{cc}
\infty & \text{if }p\text{ divides the index of an open subgroup of }G\text{,%
} \\ 
0 & \text{otherwise.}%
\end{array}%
\right.
\end{equation*}%
We let $D_{G}$ be the UHF-algebra $\bigotimes_{p\in \mathcal{P}%
}M_{p}^{\otimes \boldsymbol{n}_{G}(p)}$ corresponding to $\boldsymbol{n}_{G}$%
.
\end{definition}

It is clear that $G$ is a pro-$p$ group if and only if $\boldsymbol{n}%
_{G}=p^{\infty }$ or, equivalently, $D_{G}=M_{p^{\infty}}$.

\subsection{Model action}

The goal of this subsection is to construct a model action $\delta ^{G}$ of $%
G$ on $D_{G}$ with the Rokhlin property; see Theorem~\ref%
{thm:ModelActionProfinite}. This action will be crucial in the next section,
where for certain nonamenable groups, we construct many non weakly cocycle
conjugate strongly outer actions on UHF-algebras.

\begin{remark}
\label{rmk:modelFiniteG} Suppose that $G$ is finite. Then $%
D_{G}=M_{|G|^{\infty }}$, and the model action in this case is rather easy
to describe. If $\lambda ^{G}\colon G\rightarrow U(\ell ^{2}(G))$ denotes
the left regular representation, then the model action $\delta ^{G}\colon
G\rightarrow \mathrm{Aut}(D_{G})$ is given by $\delta _{g}^{G}=\mathrm{Ad}%
(\lambda _{g}^{G})^{\otimes \mathbb{N}}$.
\end{remark}

The following is folklore; see, for example, \cite[Subsection 2.4]%
{izumi_finite_2004-1} (but note that the reference given there only proves
the statement about fixed point algebras for $G=\mathbb{Z}_p$). Since we
have not been able to find a reference, we include a short proof for the
convenience of the reader. (The proof given below uses the classification
results of \cite{izumi_finite_2004}, but a direct and elementary, although
longer, proof can also be given.)

\begin{lemma}
\label{lma:RokhlinFiniteAbelian} Let $G$ be a finite group. Then the action $%
\delta ^{G}\colon G\rightarrow \mathrm{Aut}(D_{G})$ described in the remark
above has the Rokhlin property. When $G$ is abelian, then $D_{G}\rtimes
_{\delta ^{G}}G$ is naturally isomorphic to $D_{\widehat{G}}$, in such a way
that the dual action $\widehat{\delta ^{G}}\colon \widehat{G}\rightarrow 
\mathrm{Aut}(D_{G}\rtimes _{\delta ^{G}}G)$ is conjugate to $\delta ^{%
\widehat{G}}$.
\end{lemma}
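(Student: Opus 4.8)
The statement has three parts: the action $\delta^G$ has the Rokhlin property; the crossed product $D_G\rtimes_{\delta^G}G$ is isomorphic to $D_{\widehat G}$; and under this isomorphism the dual action $\widehat{\delta^G}$ is conjugate to $\delta^{\widehat G}$. I will treat them in this order, leaning on the explicit description $\delta^G_g=\mathrm{Ad}(\lambda^G_g)^{\otimes\mathbb N}$ from Remark~\ref{rmk:modelFiniteG}, where $\lambda^G$ is the left regular representation of the finite group $G$ on $\ell^2(G)$.

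Let me think about what I actually need.

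For the Rokhlin property, the idea is that a single tensor factor $\mathrm{Ad}(\lambda^G)$ on $B(\ell^2(G))\cong M_{|G|}$ already carries the data of the regular representation, and infinitely many factors give room to build approximate Rokhlin maps. Concretely, inside one copy of $M_{|G|}$ the rank-one projections $\{e_{gg}\}_{g\in G}$ onto the basis vectors $\delta_g\in\ell^2(G)$ are permuted by $\delta^G_g$ exactly as $G$ acts on itself by left translation, so $f\mapsto\sum_{g\in G}f(g)e_{gg}$ is an equivariant unital embedding of $(C(G),\mathtt{Lt}^G)$ into one tensor factor. I expect the cleanest route, rather than verifying Definition~\ref{Definition:Rokhlin} by hand, is to invoke the classification machinery of Izumi that the authors cite: the action $\mathrm{Ad}(\lambda^G)^{\otimes\mathbb N}$ is the canonical ``model'' Rokhlin action, and one checks it satisfies the abstract characterization of the Rokhlin property for finite groups (approximate central equivariant copies of $C(G)$), using the many tensor factors to produce near-central maps. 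So the first step is to record that $C(G)\hookrightarrow M_{|G|}$ equivariantly via the diagonal projections, and the infinite tensor power gives approximate centrality for free.

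For the second and third parts I restrict to abelian $G$, where $\widehat G$ is again a finite abelian group with $|\widehat G|=|G|$, so $D_{\widehat G}=M_{|G|^\infty}=D_G$ as UHF-algebras, and the content is to identify the crossed product and the dual action correctly. The key computation is the single-factor crossed product $M_{|G|}\rtimes_{\mathrm{Ad}(\lambda^G)}G$. Since $\mathrm{Ad}(\lambda^G)$ is implemented by the unitaries $\lambda^G_g\in M_{|G|}$, this crossed product is an \emph{inner} twist, and the standard Takai/Fourier-transform picture for finite abelian groups gives $M_{|G|}\rtimes_{\mathrm{Ad}(\lambda^G)}G\cong M_{|G|}\otimes C^*(G)\cong M_{|G|}\otimes C(\widehat G)$; after diagonalizing, the right factor $C(\widehat G)\cong\mathbb C^{|\widehat G|}$ gets absorbed and one gets $M_{|G|}$ back, but now carrying the \emph{dual} left-regular representation of $\widehat G$. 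I would make this precise by writing the implementing unitary $U_g=\lambda^G_g$ together with the canonical unitaries $w_g$ of the crossed product: the products $w_g U_g^{*}$ are fixed by $\mathrm{Ad}$ and generate a copy of $C^*(G)\cong C(\widehat G)$ commuting with $M_{|G|}$, exhibiting the isomorphism $M_{|G|}\rtimes G\cong M_{|G|}\otimes C(\widehat G)$ on which the dual $\widehat G$-action is translation on $C(\widehat G)\cong C(\widehat G)$ tensored with the conjugation giving $\mathrm{Ad}(\lambda^{\widehat G})$. Passing to the infinite tensor power and using that crossed products commute with (minimal, unital) infinite tensor products of inner actions, this upgrades to $D_G\rtimes_{\delta^G}G\cong\bigl(M_{|G|}\otimes C(\widehat G)\bigr)^{\otimes\mathbb N}$ with the dual action acting factorwise; absorbing the commutative factors identifies the result with $D_{\widehat G}$ and the dual action with $\delta^{\widehat G}=\mathrm{Ad}(\lambda^{\widehat G})^{\otimes\mathbb N}$.

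The main obstacle, I expect, is the bookkeeping in the third part: keeping track of exactly which representation of $\widehat G$ emerges after diagonalizing the single-factor crossed product, and checking that it is genuinely the \emph{regular} representation $\lambda^{\widehat G}$ (rather than some other multiplicity-free representation) so that the identification with the model action $\delta^{\widehat G}$ is on the nose. Abstract Takai duality only gives that the double dual returns $\delta^G$ up to a stabilization; here I want the sharper statement that one dualization already lands exactly on the model action for $\widehat G$, which forces me to exploit the special self-dual structure of $\mathrm{Ad}(\lambda^G)^{\otimes\mathbb N}$. I would verify this by a direct Fourier-transform calculation on a single factor, where the regular representation of $G$ becomes, after the crossed product and diagonalization, the regular representation of $\widehat G$ by the Pontryagin self-duality of $L^2(G)\cong L^2(\widehat G)$; the infinite tensor power then propagates the conjugacy factor by factor, which is exactly the naturality asserted in the lemma.
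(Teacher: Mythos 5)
Your treatment of the first assertion is correct and coincides with the paper's: the diagonal embedding $(C(G),\mathtt{Lt}^G)\hookrightarrow (M_{|G|},\mathrm{Ad}(\lambda^G))$, placed in tensor factors far out in the infinite product, yields the approximately central, approximately equivariant, approximately multiplicative maps required by Definition~\ref{Definition:Rokhlin}. Your single-factor untwisting is also correct: the unitaries $w_g(\lambda^G_g)^{\ast}$ commute with $M_{|G|}$ and generate a copy of $C^{\ast}(G)\cong C(\widehat{G})$, so $M_{|G|}\rtimes_{\mathrm{Ad}(\lambda^G)}G\cong M_{|G|}\otimes C(\widehat{G})$. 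The genuine gap is the step you use to pass to infinitely many factors: it is \emph{not} true that ``crossed products commute with (minimal, unital) infinite tensor products of inner actions,'' and the claimed isomorphism $D_G\rtimes_{\delta^G}G\cong\bigl(M_{|G|}\otimes C(\widehat{G})\bigr)^{\otimes\mathbb{N}}$ is false for nontrivial $G$. The left-hand side is simple (for instance because $\delta^G$ is an outer action on a simple C*-algebra, or via the Rokhlin property), while the right-hand side has center $C(\widehat{G})^{\otimes\mathbb{N}}\cong C(\widehat{G}^{\mathbb{N}})$. The structural reason is that $\delta^G$ itself is not inner; only its finite stages $\mathrm{Ad}(\lambda_g^{\otimes n})$ are. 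Hence the untwisting produces exactly \emph{one} copy of $C(\widehat{G})$, not one per tensor factor: what is true is that $D_G\rtimes_{\delta^G}G=\varinjlim_n\bigl(M_{|G|}^{\otimes n}\rtimes_{\mathrm{Ad}(\lambda^{\otimes n})}G\bigr)\cong\varinjlim_n\bigl(M_{|G|}^{\otimes n}\otimes C(\widehat{G})\bigr)$, where the connecting maps send $a\otimes u_g\mapsto a\otimes\lambda_g\otimes u_g$, so the single copy of $C(\widehat{G})$ is mixed into each new matrix factor rather than replicated. Your concluding moves---``absorbing the commutative factors'' and ``propagating the conjugacy factor by factor''---therefore have nothing to operate on, and both the identification of the crossed product and the identification of the dual action collapse.

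A computational repair is possible but is genuinely harder than your sketch suggests: at each finite stage the dual action is $\mathrm{id}_{M_{|G|}^{\otimes n}}\otimes(\text{translation on }C(\widehat{G}))$ on a non-simple algebra, which is not conjugate to any finite stage $\mathrm{Ad}(\lambda^{\widehat{G}})^{\otimes n}$ of the model, so the conjugacy with $\delta^{\widehat{G}}$ can only be established in the limit, via an Elliott-type approximate intertwining. This is precisely the work the paper outsources to known machinery: the crossed product is a UHF-algebra by \cite[Corollary~3.11]{gardella_crossed_2014}, and the unital inclusions $D_G\subseteq D_G\rtimes_{\delta^G}G\subseteq\mathcal{K}(\ell^2(G))\otimes D_G\cong D_G$ force its supernatural number to equal that of $D_G=D_{\widehat{G}}$; for the dual action, one notes that $\delta^G$ is approximately representable in the sense of \cite[Definition~3.6]{izumi_finite_2004} (witnessed by the unitaries $\lambda_g^{\otimes n}$), so $\widehat{\delta^G}$ has the Rokhlin property by \cite[part~(2) of Lemma~3.8]{izumi_finite_2004}, and then Izumi's uniqueness theorem \cite[Theorem~3.5]{izumi_finite_2004} for Rokhlin actions whose individual automorphisms are approximately inner yields $\widehat{\delta^G}\cong\delta^{\widehat{G}}$. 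To close your argument you must either carry out the intertwining for the inductive limit described above, or invoke this classification machinery.
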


\begin{proof}
It is easy to see that $\delta ^{G}$ has the Rokhlin property, since there
is a unital and equivariant embedding $C(G)\rightarrow B(\ell ^{2}(G))\cong
M_{|G|}$ as multiplication operators. The crossed product $D_{G}\rtimes
_{\delta ^{G}}G$ is a UHF-algebra by \cite[Corollary~3.11]%
{gardella_crossed_2014}. Since there are unital inclusions 
\begin{equation*}
D_{G}\subseteq D_{G}\rtimes _{\delta ^{G}}G\subseteq \mathcal{K}%
(\ell^2(G))\otimes D_{G}\cong D_{G},
\end{equation*}%
it follows that $D_{G}$ and $D_{G}\rtimes _{\delta ^{G}}G$ have the same
corresponding supernatural number, and hence they are isomorphic. In
particular, $D_{G}\rtimes _{\delta ^{G}}G$ is isomorphic to $D_{\widehat{G}}$%
.

It remains to identify the dual action of $\delta^{G}$. Observe that $%
\delta^G$ is approximately representable in the sense of \cite[Definition~3.6%
]{izumi_finite_2004}, as one may take the unitaries $u(g)$ appearing in said
definition to be $u(g)=\lambda_g^{\otimes n}$ for a large enough $n\in%
\mathbb{N}$. By \cite[part~(2) of Lemma~3.8]{izumi_finite_2004}, it follows
that the dual action $\widehat{\delta^{G}}$ has the Rokhlin property as an
action of $\widehat{G}$ on $D_G\rtimes_{\delta^G}G\cong D_{\widehat{G}}$.
Since for $\chi\in\widehat{G}$, the automorphisms $\delta^{\widehat{G}%
}_{\chi}$ and $\widehat{\delta^{G}_{\chi}}$ are both approximately inner, it
follows from \cite[Theorem~3.5]{izumi_finite_2004} that $\widehat{\delta ^{G}%
}$ is conjugate to $\delta ^{\widehat{G}}$, and the proof is finished.
\end{proof}

The model action $\delta^G$ for an arbitrary profinite abelian group $G$
will be constructed from its finite quotients using Lemma~\ref%
{lma:RokhlinFiniteAbelian}. The following proposition is the inductive step
in the construction.

\begin{proposition}
\label{prop:IndStep} Let $H$ be a finite abelian group, let $N$ be a
subgroup of $H$, and set $Q=H/N$ with quotient map $\pi\colon H\to Q$.
Denote by $\delta^H\colon H\to\mathrm{Aut}(D_H)$ and $\delta^Q\colon Q\to%
\mathrm{Aut}(D_Q)$ the actions described in Remark~\ref{rmk:modelFiniteG}.
Then there is an injective unital homomorphism $\iota\colon D_Q\to D_H$
satisfying 
\begin{equation*}
\delta^H_h\circ\iota=\iota\circ\delta^Q_{\pi(h)}
\end{equation*}
for all $h\in H$.
\end{proposition}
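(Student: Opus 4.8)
The plan is to build the embedding one tensor factor at a time. Since $D_Q=M_{|Q|}^{\otimes\mathbb{N}}$ and $D_H=M_{|H|}^{\otimes\mathbb{N}}$ with $\delta^Q_q=\mathrm{Ad}(\lambda^Q_q)^{\otimes\mathbb{N}}$ and $\delta^H_h=\mathrm{Ad}(\lambda^H_h)^{\otimes\mathbb{N}}$, it suffices to construct a single unital injective $*$-homomorphism $\iota_0\colon M_{|Q|}\to M_{|H|}$ satisfying
\[
\mathrm{Ad}(\lambda^H_h)\circ\iota_0=\iota_0\circ\mathrm{Ad}(\lambda^Q_{\pi(h)})\qquad(h\in H),
\]
and then to set $\iota=\iota_0^{\otimes\mathbb{N}}$, so that this single-factor intertwining relation tensors up to the desired identity $\delta^H_h\circ\iota=\iota\circ\delta^Q_{\pi(h)}$.

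To produce $\iota_0$, I would look for a finite-dimensional unitary representation $\sigma$ of $H$ and a unitary $W\colon\ell^2(H)\to\ell^2(Q)\otimes\mathbb{C}^{\dim\sigma}$ with $W\lambda^H_hW^*=\lambda^Q_{\pi(h)}\otimes\sigma_h$ for every $h\in H$. Then $\iota_0(x)=W^*(x\otimes 1)W$ is a unital injective embedding of $M_{|Q|}=B(\ell^2(Q))$ into $M_{|H|}=B(\ell^2(H))$, and a direct computation (substituting $\lambda^H_hW^*=W^*(\lambda^Q_{\pi(h)}\otimes\sigma_h)$ and cancelling the $\sigma$-factor after conjugation) yields the displayed intertwining relation. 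In this way the whole construction reduces to the representation-theoretic statement
\[
\lambda^H\cong(\lambda^Q\circ\pi)\otimes\sigma\qquad\text{as representations of }H.
\]

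The heart of the matter is proving this isomorphism, and the only genuine obstacle is that the extension $N\hookrightarrow H\twoheadrightarrow Q$ need not split, so one cannot simply decompose $\ell^2(H)\cong\ell^2(Q)\otimes\ell^2(N)$ equivariantly. I would circumvent this by passing to characters, where the non-splitting causes no trouble. For a finite abelian group one has $\lambda^G\cong\bigoplus_{\chi\in\widehat G}\chi$, and the dual of $\pi$ identifies $\widehat Q$ with the subgroup $N^{\perp}=\{\chi\in\widehat H:\chi|_N=1\}$, so that $\lambda^Q\circ\pi\cong\bigoplus_{\chi\in N^{\perp}}\chi$. Choosing a set $T$ of coset representatives for $N^{\perp}$ in $\widehat H$ (which always exists, with no group splitting required) and setting $\sigma=\bigoplus_{\eta\in T}\eta$, the map $(\chi,\eta)\mapsto\chi\eta$ is a bijection $N^{\perp}\times T\to\widehat H$, whence $(\lambda^Q\circ\pi)\otimes\sigma\cong\bigoplus_{\chi\in\widehat H}\chi\cong\lambda^H$. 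Equivalently, one may compare characters: the character of $(\lambda^Q\circ\pi)\otimes\sigma$ vanishes for $h\notin N$ (as then $\pi(h)\neq e_Q$) and for $h\in N\setminus\{e\}$ (since $\sum_{\eta\in T}\eta(h)=|N^{\perp}|^{-1}\sum_{\psi\in\widehat H}\psi(h)=0$), while at $e$ it equals $|Q|\,|T|=|H|$, matching $\lambda^H$. Since $\dim\sigma=[\widehat H:N^{\perp}]=|N|$, the target dimension is $|Q|\,|N|=|H|$, consistent with $\iota_0$ being a unital embedding $M_{|Q|}\hookrightarrow M_{|H|}$. Fixing an intertwining unitary $W$ implementing this isomorphism then defines $\iota_0$, and taking infinite tensor powers completes the proof.
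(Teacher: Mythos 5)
Your proof is correct, but it reaches the conclusion by a genuinely different route than the paper. Both arguments ultimately rest on the same combinatorial choice---a transversal for $N^{\perp}=\widehat{Q}$ inside $\widehat{H}$, which is what sidesteps the possible non-splitting of $N\hookrightarrow H\twoheadrightarrow Q$---but they deploy it on opposite sides of Pontryagin duality. The paper proves the easy \emph{subgroup} statement: a section of $\widehat{H}\to\widehat{H}/\widehat{Q}\cong\widehat{N}$ gives a unitary $\ell^{2}(\widehat{Q})\otimes\ell^{2}(\widehat{N})\cong\ell^{2}(\widehat{H})$ carrying $\lambda^{\widehat{Q}}_{\widehat{q}}\otimes 1$ to $\lambda^{\widehat{H}}_{\widehat{q}}$ (no twist representation appears there, since the restriction of a regular representation to a subgroup is a trivial multiple of the subgroup's regular representation), hence a $\widehat{Q}$-equivariant unital embedding $D_{\widehat{Q}}\to D_{\widehat{H}}$; it then converts this subgroup-equivariance into the desired quotient-equivariance by taking crossed products by $\widehat{Q}$ and $\widehat{H}$ and invoking Lemma~\ref{lma:RokhlinFiniteAbelian}, whose proof relies on Izumi's classification of Rokhlin actions. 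You instead prove the \emph{quotient} statement directly, $\lambda^{H}\cong(\lambda^{Q}\circ\pi)\otimes\sigma$ with $\sigma=\bigoplus_{\eta\in T}\eta$ running over coset representatives of $N^{\perp}$ in $\widehat{H}$ (justified either by the bijection $N^{\perp}\times T\to\widehat{H}$ or by the character computation, both of which are correct), and conjugation by the intertwiner $W$ then cancels the twist $\sigma$ and yields $\iota_{0}$ with the required covariance; tensoring up is immediate because $\iota_{0}$ is unital. What your route buys is self-containedness: it stays entirely within the finite-dimensional representation theory of $H$ and avoids crossed products, duality, and classification input. What the paper's route buys is economy within the section: the duality lemma is needed anyway (for instance, to identify the copy of $C(G_{i})\cong\mathbb{C}\rtimes\widehat{G_{i}}$ inside $D_{G_{i}}$ in the proof of Theorem~\ref{thm:ModelActionProfinite}), so routing the proposition through it costs nothing extra. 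A cosmetic remark: injectivity of $\iota$ is automatic, since $D_{Q}$ is simple and $\iota$ is a unital $*$-homomorphism.
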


\begin{proof}
For a finite group $K$, we denote by $\{\xi _{k}^{K}\}_{k\in K}$ the
canonical basis of $\ell ^{2}(K)$. Also, when $K$ is abelian, we write $%
\widehat{K}$ for its dual group, and an element of $\widehat{K}$ will be
denoted, with a slight abuse of notation, by $\widehat{k}$. Observe that $%
\widehat{Q}$ is a subgroup of $\widehat{H}$, and that $\widehat{H}/\widehat{Q%
}\cong \widehat{N}$. Fix a section $s\colon \widehat{N}\rightarrow \widehat{H%
}$. Then $s$ induces a unitary $U\colon \ell ^{2}(\widehat{Q})\otimes \ell
^{2}(\widehat{N})\rightarrow \ell ^{2}(\widehat{H})$ given by 
\begin{equation*}
U(\xi _{\widehat{q}}^{\widehat{Q}}\otimes \xi _{\widehat{n}}^{\widehat{N}%
})=\xi _{\widehat{q}s(\widehat{n})}^{\widehat{H}}
\end{equation*}%
for every $\widehat{q}\in \widehat{Q}$ and every $\widehat{n}\in \widehat{N}$%
. Define a unital embedding $\varphi \colon B(\ell ^{2}(\widehat{Q}%
))\rightarrow B(\ell ^{2}(\widehat{H}))$ by 
\begin{equation*}
\varphi (a)(\xi _{\widehat{q}s(\widehat{n})}^{\widehat{H}})=U(a(\xi _{%
\widehat{q}}^{\widehat{Q}})\otimes \xi _{\widehat{n}}^{\widehat{N}})
\end{equation*}%
for every $a\in B(\ell ^{2}(\widehat{Q}))$, for every $\widehat{q}\in 
\widehat{Q}$ and every $\widehat{n}\in \widehat{N}$. Let $\widehat{q}\in 
\widehat{Q}$. We claim that $\varphi (\lambda _{\widehat{q}}^{\widehat{Q}%
})=\lambda _{\widehat{q}}^{\widehat{H}}$. To see this, let $\widehat{p}\in 
\widehat{Q}$ and let $\widehat{n}\in \widehat{N}$. Then 
\begin{equation*}
\varphi (\lambda _{\widehat{q}}^{\widehat{Q}})(\xi _{\widehat{p}s(\widehat{n}%
)}^{\widehat{H}})=U(\lambda _{\widehat{q}}^{\widehat{Q}}(\xi _{\widehat{p}}^{%
\widehat{Q}})\otimes \xi _{\widehat{n}}^{\widehat{N}})=U(\xi _{\widehat{q}%
\widehat{p}}^{\widehat{Q}}\otimes \xi _{\widehat{n}}^{\widehat{N}})=\xi _{%
\widehat{q}\widehat{p}s(\widehat{n})}^{\widehat{H}}=\lambda _{\widehat{q}}^{%
\widehat{H}}(\xi _{\widehat{p}s(\widehat{n})}^{\widehat{H}}).
\end{equation*}%
This proves the claim. It follows that $\varphi $ induces, upon taking its
infinite tensor product, a unital injective homomorphism $\psi \colon D_{%
\widehat{Q}}\rightarrow D_{\widehat{H}}$, which moreover satisfies 
\begin{equation*}
\psi \circ \delta _{\widehat{q}}^{\widehat{Q}}=\delta _{\widehat{q}}^{%
\widehat{H}}\circ \psi
\end{equation*}%
for all $\widehat{q}\in \widehat{Q}$, by the claim above. After taking
crossed products by $\widehat{Q}$ and $\widehat{H}$, and using Lemma~\ref%
{lma:RokhlinFiniteAbelian}, we obtain a unital embedding $\iota \colon
D_{Q}\rightarrow D_{H}$, which satisfies $\delta _{h}^{H}\circ \iota =\iota
\circ \delta _{\pi (h)}^{Q}$ for all $h\in H$. This completes the proof.
\end{proof}

Here is the main result of this section.

\begin{theorem}
\label{thm:ModelActionProfinite} Let $G$ be a second countable, abelian,
profinite group. Let $D_{G}$ denote the UHF-algebra associated with $G$ as
in Definition~\ref{Definition:supernatural}. Then there exists a canonical
action $\delta ^{G}\colon G\rightarrow \mathrm{Aut}(D_{G})$ with the
following properties.

\begin{enumerate}
\item There exists an equivariant unital embedding $(C(G), \mathtt{Lt}^G)\to
(D_G, \delta^G)$.

\item $(D_G^{\otimes\mathbb{N}},(\delta^G)^{\otimes\mathbb{N}})$ is
equivariantly isomorphic to $(D_G, \delta^G)$.

\item $\delta^G$ has the Rokhlin property.

\item The fixed point algebra $D_{G}^{\delta ^{G}}$ is isomorphic to $D_{G}$.
\end{enumerate}

Moreover, $\delta ^{G}$ is---up to conjugacy---the \emph{unique} action of $%
G $ on $D_{G}$ with the Rokhlin property. Furthermore, if $E$ is a unital
C*-algebra with $D_{G}\otimes E\cong E$ and $\beta \colon G\rightarrow 
\mathrm{Aut}(E)$ is an action with the Rokhlin property, then $\beta $ is
conjugate to $\delta ^{G}\otimes \beta $.
\end{theorem}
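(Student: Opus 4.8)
The plan is to construct $\delta^G$ by realizing $G$ as an inverse limit of finite abelian quotients and assembling the finite-level model actions of Remark~\ref{rmk:modelFiniteG} via the inverse-limit machinery of Lemma~\ref{inverse limit action}. Concretely, since $G$ is a second countable abelian profinite group, write $G=\varprojlim(G_i,\pi_{i,j})$ where each $G_i=G/N_i$ is a finite abelian quotient by an open subgroup $N_i$, with the $N_i$ forming a decreasing neighborhood basis of the identity; the quotient maps $\pi_{i,j}\colon G_j\to G_i$ are surjective. Proposition~\ref{prop:IndStep} provides, for each $i\le j$, an injective unital homomorphism $\iota_{i,j}\colon D_{G_i}\to D_{G_j}$ satisfying $\delta^{G_j}_h\circ\iota_{i,j}=\iota_{i,j}\circ\delta^{G_i}_{\pi_{i,j}(h)}$ for all $h\in G_j$. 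First I would verify that these connecting maps are coherent, i.e.\ $\iota_{j,k}\circ\iota_{i,j}=\iota_{i,k}$, so that $(D_{G_i},\iota_{i,j})$ is a genuine direct system; I expect this to follow from the naturality of the construction in Proposition~\ref{prop:IndStep} (the unitaries $U$ are built from compatible sections), though checking it cleanly is the first place where care is needed. The supernatural number of $\varinjlim D_{G_i}$ matches $\boldsymbol{n}_G$ by Definition~\ref{Definition:supernatural}, since a prime $p$ divides the index of an open subgroup of $G$ exactly when it divides some $|G_i|$, so the limit is $D_G$. Lemma~\ref{inverse limit action} then yields a unique continuous action $\delta^G\colon G\to\mathrm{Aut}(D_G)$ with $\delta^G_g\circ\iota_{i,\infty}=\iota_{i,\infty}\circ\delta^{G_i}_{\pi_{i,\infty}(g)}$.

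Next I would establish properties (1)--(4). For~(1), at each finite level there is a unital equivariant embedding $(C(G_i),\mathtt{Lt}^{G_i})\to(D_{G_i},\delta^{G_i})$ coming from the multiplication-operator embedding $C(G_i)\to B(\ell^2(G_i))$ used in Lemma~\ref{lma:RokhlinFiniteAbelian}; passing to the limit and using $C(G)=\varinjlim C(G_i)$ (valid since the $G_i$ exhaust the topology of $G$) gives $(C(G),\mathtt{Lt}^G)\to(D_G,\delta^G)$. Property~(3), the Rokhlin property, is the heart of the matter: I would deduce it directly from~(1). Indeed, the defining u.c.p.\ maps $\psi\colon C(G)\to D_G$ required in Definition~\ref{Definition:Rokhlin} can be taken to be (compressions of) the equivariant embedding from~(1); the equivariance condition $\psi\circ\mathtt{Lt}_g=\alpha_g\circ\psi$ holds exactly, and the multiplicativity and near-commutation conditions hold because finite subsets of $C(G)$ and of $D_G$ are approximated at a finite level $D_{G_i}$, where one uses that the image of $C(G_i)$ generates a finite-dimensional commutative algebra commuting with the relevant matrix block. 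Property~(4) follows from~(2) together with the finite-level fact that $D_{G_i}^{\delta^{G_i}}\cong D_{G_i}$ (a Rokhlin fixed-point algebra of a UHF-algebra is again UHF with the same supernatural number, by \cite{gardella_crossed_2014}-type reasoning), and~(2) itself follows from the self-absorbing identity $D_G^{\otimes\mathbb{N}}\cong D_G$ for UHF-algebras of the given type together with compatibility of the tensor-power action with the direct-limit structure.

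The main obstacle I anticipate is the \emph{uniqueness} clause and the final absorption statement $\beta\cong\delta^G\otimes\beta$. For uniqueness I would argue that any Rokhlin action $\beta$ of $G$ on $D_G$ is cocycle conjugate—indeed conjugate—to $\delta^G$ by an Elliott-intertwining/approximate-intertwining argument: the Rokhlin property supplies approximately central, approximately multiplicative equivariant embeddings of $C(G)$, and combined with the classification of the underlying UHF-algebra and approximate innerness of the relevant automorphisms (as in \cite{izumi_finite_2004}), one builds a sequence of unitaries implementing an equivariant isomorphism. The cleanest route is likely to reduce to the finite-quotient case, where Lemma~\ref{lma:RokhlinFiniteAbelian} and the Izumi classification \cite{izumi_finite_2004} give uniqueness of Rokhlin actions, and then pass to the limit; the hard part is controlling the approximate-intertwining estimates \emph{uniformly over all $g\in G$}, since $G$ is now infinite and compact rather than finite, so one must exploit continuity and the neighborhood-basis structure of the $N_i$ to make the finite-level matchings cohere. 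For the absorption statement, I would observe that $\delta^G\otimes\beta$ is again a Rokhlin action of $G$ on $D_G\otimes E\cong E$ (the Rokhlin property of one tensor factor passes to the tensor product), so the uniqueness/intertwining argument applied to $E$ with its self-absorbing hypothesis $D_G\otimes E\cong E$ shows $\delta^G\otimes\beta$ is conjugate to $\beta$ via an approximate-intertwining absorption argument in the spirit of strongly self-absorbing C*-algebra theory.
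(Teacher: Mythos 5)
Your construction of $\delta^G$ is the paper's construction: realize $G=\varprojlim G_i$ over the open subgroups, connect the finite-level model actions via Proposition~\ref{prop:IndStep}, and invoke Lemma~\ref{inverse limit action}; properties (1) and (2) are then obtained exactly as in the paper, by passing finite-level equivariant embeddings and self-absorption to the limit. Your coherence worry about $\iota_{j,k}\circ\iota_{i,j}=\iota_{i,k}$ is legitimate (the paper glosses over it); it is resolved by passing to a cofinal decreasing chain of open subgroups, which second countability provides. Two caveats on the middle steps. For (3), a single fixed embedding $C(G)\to D_G$, or a compression of it, does not suffice: its image will not approximately commute with arbitrary finite subsets of $D_G$. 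One needs property (2) to relocate the embedding from (1) into a far tensor factor of $D_G^{\otimes\mathbb{N}}\cong D_G$; this is precisely why the paper derives (3) from (1) \emph{and} (2). Your finite-level commutation idea can be repaired along these lines, but note that the Rokhlin maps of Definition~\ref{Definition:Rokhlin} must be defined on all of $C(G)$, e.g.\ by precomposing with the $G$-equivariant conditional expectation $C(G)\to C(G_i)$. For (4), your route (fixed points of the limit action are the direct limit of the finite-level fixed point algebras $D_{G_i}^{\delta^{G_i}}\cong D_{G_i}$) differs from the paper's, which deduces from the Rokhlin property that $D_G^{\delta^G}$ is UHF and $D_G$-absorbing by \cite{gardella_crossed_2014} and then applies \cite[Proposition~5.12]{toms_strongly_2007}; both work, though yours needs the folklore finite-level fixed-point fact that the paper's route avoids.

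The genuine divergence, and the genuine gap, is the uniqueness and absorption clause. The paper does not prove it: it is quoted from \cite[Theorem~X.4.5]{gardella_compact_2015}. You attempt a proof by equivariant approximate intertwining, and you correctly flag it as the hard part, but your proposed reduction cannot work as stated: an arbitrary Rokhlin action $\beta$ of $G$ on $D_G$ (or on $E$) is not presented as a limit of actions factoring through the finite quotients $G_i$, so there are no finite-level actions to which Izumi's classification \cite{izumi_finite_2004} could be applied levelwise, and no finite-level matchings to ``make cohere.'' The arguments behind the cited theorem instead work directly with the compact group: the uniformity over $g\in G$ that you worry about is already encoded in the Rokhlin property itself (the maps are defined on $C(G)$, with conditions holding for all $g\in G$ simultaneously), and these Rokhlin maps are combined with approximate innerness of all automorphisms of UHF algebras in an equivariant Elliott-type one-sided intertwining. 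As written, your plan either needs that full argument carried out (essentially reproving the cited theorem, a substantial piece of work), or the clause should be handled by citation, as the paper does.
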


\begin{proof}
Since the group $G$ is fixed, we drop the subscript $G$ from all algebras
and actions, in order to lighten the notation. We first construct the
action, and then show that it has the desired properties. Let $\mathcal{V}$
be the collection of open subgroups of $G$, and observe that $\mathcal{V}$
is countable. Define an inverse system $(G_{i},\pi _{i,j})_{i,j\in I}$ of
finite groups as follows. Set $I=\mathcal{V}$ ordered by \emph{reverse
inclusion}. For $i\in I$, let $G_{i}=G/i$, and for $i,j\in I$ with $i\leq j$%
, let $\pi _{i,j}\colon G_{j}\rightarrow G_{i}$ be the canonical quotient
map. Then $G=\varinjlim (G_{i},\pi _{i,j})$. By Proposition~\ref%
{prop:IndStep}, for every $i,j\in I$ with $i\leq j$, there exists a unital
embedding $\iota _{i,j}\colon D_{G_{i}}\rightarrow D_{G_{j}}$ satisfying $%
\delta _{g}^{G_{j}}\circ \iota _{i,j}=\iota _{i,j}\circ \delta _{\pi
_{i,j}(g)}^{G_{i}}$ for all $g\in G_{j}$. Observe that $D$ can be identified
with the direct limit of the UHF-algebras $D_{G_{i}}$, for $i\in I$, with
connective maps $\iota _{i,j}$ for $i,j\in I$ with $i\leq j$. By Lemma~\ref%
{inverse limit action}, there exists an induced action $\delta \colon
G\rightarrow \mathrm{Aut}(D)$ given by 
\begin{equation*}
\delta _{g}(\iota _{i,\infty }(a))=\iota _{i,\infty }(\delta _{\pi
_{i,\infty }(g)}^{G_{i}}(a))
\end{equation*}%
for all $g\in G$, for all $i\in I$, and all $a\in D_{G_{i}}$.

(1): Let $i\in I$. Observe that the restriction of $\delta ^{G_{i}}$ to $%
\mathbb{C}\rtimes \widehat{G_{i}}\cong C(G_{i})$ is naturally conjugate to
the left translation action $\mathtt{Lt}^{G_{i}}$. In particular, there is a
unital equivariant embedding $\phi _{i}\colon (C(G_{i}),\mathtt{Lt}%
^{G_{i}})\rightarrow (D_{G_{i}},\delta ^{G_{i}})$. For $i,j\in I$ with $%
i\leq j$, denote by $\pi _{i,j}^{\ast }\colon C(G_{i})\rightarrow C(G_{j})$
the injective unital *-homomorphism given by $\pi _{i,j}^{\ast }(f)=f\circ
\pi _{i,j}$ for all $f\in C(G_{i})$. Then the maps $\phi _{i}$ are easily
seen to satisfy $\iota _{i,j}\circ \phi _{i}=\phi _{j}\circ \pi _{i,j}^{\ast
}$ for all $i,j\in I$ with $i\leq j$. By the universal property of direct
limits, it follows that there exists a unital equivariant embedding $(C(G),%
\mathtt{Lt})\rightarrow (D,\delta )$, as desired.

(2): This is an easy consequence of the fact that $\delta ^{G_{i}}$ is
conjugate to $(\delta ^{G_{i}})^{\otimes \mathbb{N}}$ for every $i\in I$.

(3): This is an immediate consequence of (1) and (2).

(4): By part~(1) of Corollary~3.11 in \cite{gardella_crossed_2014}, the
fixed point algebra $D^{\delta }$ is a UHF-algebra, and it absorbs $D$ by 
\cite[Theorem~4.3]{gardella_crossed_2014}. Since it is obviously unitally
embedded in $D$, it follows from \cite[Proposition~5.12]{toms_strongly_2007}
that $D^{\delta }$ is isomorphic to $D$. The last part of the theorem is a
consequence of \cite[Theorem~X.4.5]{gardella_compact_2015}.
\end{proof}

\section{Uncountably many actions\label{Sct:uncountably}}

In this section, given a countable group $\Lambda $ containing an infinite
subgroup $\Delta $ with relative property (T)---which we fix once and for
all---and given a UHF-algebra $A$ of infinite type, we construct uncountably
many strongly outer actions of $\Lambda $ on $A$, which are not weakly
cocycle conjugate; see Theorem~\ref{Theorem:uncountably}. In fact, we
perform the construction for an arbitrary separable unital C*-algebra $A$
satisfying the following properties: $A$ is locally reflexive, $M_{p^{\infty
}}$-absorbing for some prime $p$, has an amenable trace, and is isomorphic
to its infinite tensor product $A^{\otimes \mathbb{N}}$.

Let $G$ be a second countable abelian pro-$p$ group, and let $\delta
^{G}\colon G\rightarrow \mathrm{Aut}(D_{G})$ be the action constructed in
Theorem~\ref{thm:ModelActionProfinite}. We denote in the same fashion the
extension of $\delta ^{G}$ the weak closure of $D_{G}$ with respect to its
unique trace, which can be regarded as an action on the hyperfinite II$_{1}$
factor $R$. In the following lemma, we will use the pairing function from
Definition~\ref{Definition:mTheta-relative}. We write $\Gamma $ for the
Pontryagin dual of $G$, and we let $m^{\Gamma }\colon \Gamma \times \Gamma
\rightarrow \Gamma $ be the multiplication operation. Recall also that $%
\mathtt{Lt}\colon G\rightarrow \mathrm{Aut}(C(G))$ denotes the action by
left translation.

\begin{lemma}
\label{Lemma:key}Let $N$ be the algebra $\left( R\otimes R\right) ^{%
\overline{\otimes }\Lambda }$, and let $\rho $ be the action $\left( \delta
^{G}\otimes \mathrm{id}_{R}\right) ^{\otimes \Lambda }$ of $G$ on $N$.
Define $B$ to be the fixed point algebra $N^{\rho }$ of $\rho $, which is
isomorphic to the hyperfinite II$_{1}$ factor by Theorem \ref%
{thm:ModelActionProfinite}. Consider the Bernoulli $\left( \Lambda
\curvearrowright \Lambda \right) $-action $\beta $ with base $R\otimes R$,
which is an action on $N$, and its restriction $\alpha $ to $B$. Then there
exist bijections $\eta \colon H_{\Delta ,w}^{1}(\alpha )\rightarrow \Gamma $
and $\eta ^{(2)}\colon H_{\Delta ,w}^{1}(\alpha \otimes \alpha )\rightarrow
\Gamma $ satisfying 
\begin{equation*}
\eta ^{(2)}\circ m_{\Delta }^{\alpha }=m^{\Gamma }\circ (\eta \times \eta ).
\end{equation*}
\end{lemma}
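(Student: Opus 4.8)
The plan is to realize the Pontryagin dual $\Gamma=\widehat{G}$ inside $H^1_{\Delta,w}(\alpha)$ by an explicit family of cocycles built from the spectral subspaces of the model action $\delta^G$, and then to show this family exhausts the localized cohomology by invoking Popa's cocycle superrigidity. First I would extract the relevant unitaries: applying Theorem~\ref{thm:ModelActionProfinite}(1) and passing to weak closures, the equivariant embedding $(C(G),\mathtt{Lt})\hookrightarrow(R,\delta^G)$ supplies, for each character $\chi\in\Gamma$, a unitary $w_\chi\in R$ with $\delta^G_g(w_\chi)=\overline{\chi(g)}\,w_\chi$, $w_\chi w_{\chi'}=w_{\chi\chi'}$, and $w_e=1$. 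Writing $w_\chi^{(\gamma)}$ for the image of $w_\chi\otimes 1$ in the $\gamma$-th tensor factor of $N$, and noting that $\rho_g$ scales each $w_\chi^{(\gamma)}$ by $\overline{\chi(g)}$ while $\beta_\lambda$ carries slot $\gamma$ to slot $\lambda\gamma$, I would set
\begin{equation*}
u^\chi_\lambda=w_\chi^{(\lambda)}\,(w_\chi^{(e)})^\ast .
\end{equation*}
A direct check shows $u^\chi_\lambda\in B=N^\rho$ (the two $\rho$-weights cancel), that $\lambda\mapsto u^\chi_\lambda$ is a genuine $1$-cocycle for $\alpha$ (in the generic case the distinct coordinates $e,\lambda,\lambda\mu$ commute, and the degenerate cases follow from $w_\chi w_{\chi'}=w_{\chi\chi'}$), and that $u^\chi u^{\chi'}=u^{\chi\chi'}$ pointwise. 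I would then define $\eta$ to be the inverse of $\chi\mapsto[u^\chi]$, and define $\eta^{(2)}$ by the same recipe applied to $\alpha\otimes\alpha$, viewed as the restriction to the fixed-point algebra of the Bernoulli action on $N\overline{\otimes}N$ under the \emph{diagonal} $G$-action $\rho\otimes\rho$, whose spectral index set is again $\widehat{G}=\Gamma$.

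The commuting square is essentially built into the construction. The cocycle $u^\chi\otimes u^{\chi'}$ is a product of $w_\chi^{(\lambda)}\otimes 1$ and $1\otimes w_{\chi'}^{(\lambda)}$, so its spectral weight under the diagonal action $\rho\otimes\rho$ is $\overline{\chi(g)\chi'(g)}=\overline{(\chi\chi')(g)}$. Hence $\eta^{(2)}$ reads it off as $\chi\chi'$, giving $\eta^{(2)}(m^\alpha_\Delta([u^\chi],[u^{\chi'}]))=\chi\chi'=m^\Gamma(\eta([u^\chi]),\eta([u^{\chi'}]))$; since $\eta$ is surjective, every pair of classes is of this form, so the diagram commutes on all inputs.

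It remains to prove that $\eta$ and $\eta^{(2)}$ are well defined and bijective, and the content lies entirely here. For \emph{injectivity} I would use that $\alpha|_\Delta$ is mixing: the translation action $\Delta\curvearrowright\Lambda$ has trivial stabilizers, so Proposition~\ref{Proposition:bernoulli-wm} applies to $\beta|_\Delta$ on $N$, and mixing passes to the subalgebra $B$. A $\Delta$-local weak cohomology $u^{\chi'}_\gamma=v^\ast u^\chi_\gamma\,\alpha_\gamma(v)\ \mathrm{mod}\,\mathbb{C}$ then forces, after multiplying out and letting $\gamma\to\infty$ in $\Delta$ so that the shifting tensor slots decouple (using Remark~\ref{rmk:CharMixing}), the equality $\chi=\chi'$.

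The main obstacle is \emph{surjectivity}: every $\Delta$-local weak $1$-cocycle for $\alpha$ must be $\Delta$-locally weakly cohomologous to some $u^\chi$. Here I would regard such a cocycle as a weak $1$-cocycle for the Bernoulli action $\beta$ on $N$ (since $U(B)\subseteq U(N)$ and $\alpha=\beta|_B$) and apply Popa's cocycle superrigidity for malleable, weakly mixing actions of groups containing a relative-property-(T) subgroup \cite{popa_some_2006}, in its localized form \cite{tornquist_localized_2011}: this untwists the cocycle, modulo $\Delta$-local weak cohomology, to one valued in the ``base symmetry'' of the Bernoulli action. The genuinely delicate step is then to identify this base symmetry, for the fixed-point inclusion $B=N^\rho$, with the spectral unitaries $\{w_\chi\}_{\chi\in\Gamma}$ coming from the Rokhlin model action, and to arrange the implementing unitary to lie in $B$ rather than merely in $N$; this is where the Rokhlin property and the factoriality of $B$ furnished by Theorem~\ref{thm:ModelActionProfinite} are decisive. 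The same argument, applied verbatim to $\alpha\otimes\alpha$ with the diagonal $G$-action, yields the bijectivity of $\eta^{(2)}$.
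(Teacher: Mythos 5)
Your overall architecture is the paper's: the explicit cocycles $u^\chi_\lambda=w_\chi^{(\lambda)}(w_\chi^{(e)})^*$ are exactly the ones the paper produces in its surjectivity argument (there written as $u_\gamma=v^*\beta_\gamma(v)$, with $v$ the image of a character, viewed as a unitary of $C(G)$, under the embedding from Theorem~\ref{thm:ModelActionProfinite}), the injectivity argument via mixing of $\beta|_\Delta$ and Remark~\ref{rmk:CharMixing} is the paper's, and the pairing computation is the same. The problem is that the step you flag as ``genuinely delicate'' and do not carry out is precisely the heart of the lemma, and the tools you name for it are not the ones that close it. The actual argument is short: given $u\in Z^1_w(\alpha)$, Popa's theorem \cite[Theorem 4.1]{popa_some_2006}, applied to $u$ regarded as a weak $1$-cocycle for $\beta$, gives $v\in U(N)$ and scalars $\mu_\gamma$ with $u_\gamma=\mu_\gamma v^*\beta_\gamma(v)$ for $\gamma\in\Delta$. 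Since $u_\gamma\in B=N^\rho$ and $\rho$ commutes with $\beta$, applying $\rho_g$ to this identity yields $\beta_\gamma(\rho_g(v)v^*)=\rho_g(v)v^*$ for all $\gamma\in\Delta$; since $\beta|_\Delta$ is mixing (Proposition~\ref{Proposition:bernoulli-wm} and Remark~\ref{rmk:CharMixing}), $\rho_g(v)v^*$ is a scalar. Hence $v$ is a $\rho$-eigenvector, its eigenvalue $\chi$ is a character of $G$, and $z:=(w_\chi^{(e)})^*v$ lies in $N^\rho=B$ and witnesses $[u]=[u^\chi]$ in $H^1_{\Delta,w}(\alpha)$. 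The Rokhlin model enters only through Theorem~\ref{thm:ModelActionProfinite}(1), which supplies the eigenvectors $w_\chi$; factoriality of $B$ is not used. Moreover, your literal goal of ``arranging the implementing unitary to lie in $B$ rather than merely in $N$'' cannot be the right one: if the Popa unitary could always be chosen in $B$, every class of $H^1_{\Delta,w}(\alpha)$ would be a $\Delta$-local weak coboundary for $\alpha$ itself, so the cohomology set would be a single point, contradicting the statement whenever $G$ is nontrivial. What one proves is that $v$ lies in a spectral subspace of $\rho$, i.e.\ $v\in w_\chi^{(e)}B$; the eigenvalue $\chi$, not membership in $B$, is the complete invariant.

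A second, smaller error: you assert that $\alpha\otimes\alpha$ is the restriction of the Bernoulli action on $N\overline{\otimes}N$ to the fixed-point algebra of the \emph{diagonal} action $\rho\otimes\rho$. This is false: $(N\overline{\otimes}N)^{\rho\otimes\rho}$ is strictly larger than $B\overline{\otimes}B$ (for instance, $w_\chi^{(e)}\otimes(w_\chi^{(e)})^*$ is fixed by $\rho\otimes\rho$ but does not lie in $N^\rho\overline{\otimes}N^\rho$). The algebra carrying $\alpha\otimes\alpha$, and hence the algebra whose unitaries implement equivalences in $H^1_{\Delta,w}(\alpha\otimes\alpha)$, is $B\overline{\otimes}B$; the paper uses only the inclusion $B\overline{\otimes}B\subseteq(N\overline{\otimes}N)^{\rho\otimes\rho}$, which is what permits repeating the eigenvector argument above for cocycles of $\alpha\otimes\alpha$. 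Keeping this distinction straight is essential when one checks that the unitaries produced in the $\eta^{(2)}$ part of the argument actually implement equivalences in the correct cohomology set.
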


\begin{proof}
Let $\zeta $ be the restriction of $\beta $ to $\Delta $. Let $u\colon
\Lambda \rightarrow U(B)$ be a weak $1$-cocycle for $\alpha $. Since $\alpha 
$ is the restriction of $\beta $ to $B$, we deduce that $u$ is also a $1$%
-cocycle for $\beta $. It is shown in \cite[Section 3]{popa_some_2006} that
the von Neumann-algebraic Bernoulli $(\Lambda \curvearrowright \Lambda )$%
-action $\beta $ satisfies the assumptions of \cite[Theorem 4.1]%
{popa_some_2006}. Applying \cite[Theorem 4.1]{popa_some_2006} in the case of
weak $1$-cocycles, with $S=S_{1}=\{1\}$ in the notation of \cite%
{popa_some_2006}, we conclude that $u$ is a $\Delta $-local weak coboundary
for $\beta $. Therefore, there exist a unitary $v\in U(N)$ and a function $%
\mu \colon \Delta \rightarrow \mathbb{T}$ satisfying $u_{\gamma }=\mu
_{\gamma }v^{\ast }\beta _{\gamma }(v)$ for every $\gamma \in \Delta $. Fix $%
g\in G$. Applying $\rho _{g}$ to the previous identity, and using that $%
u_{\gamma }\in N^{\rho }=B$, yields 
\begin{equation*}
\mu _{\gamma }v^{\ast }\beta _{\gamma }(v)=u_{\gamma }=\rho _{g}(u_{\gamma
})=\mu _{\gamma }\rho _{g}(v)^{\ast }\beta _{\gamma }(\rho _{g}(v))
\end{equation*}%
for every $\gamma \in \Delta $. Hence $\rho _{g}(v)v^{\ast }$ is fixed by $%
\zeta $. By Proposition \ref{Proposition:bernoulli-wm}, $\zeta $ is mixing.
Therefore by Remark~\ref{rmk:CharMixing} we conclude that $\rho
_{g}(v)v^{\ast }$ is a scalar. Define a function $\chi _{u}\colon
G\rightarrow \mathbb{T}$ by $\chi _{u}(g)=\rho _{g}(v)v^{\ast }$ for $g\in G$%
.

\begin{claim*}
$\chi _{u}$ is well-defined (that is, independent of the choice of $\mu $
and $v$).
\end{claim*}

\begin{proof}[Proof of claim]
Fix $v,v^{\prime }\in U(N)$ and $\mu ,\mu ^{\prime }\colon \Delta
\rightarrow \mathbb{T}$ satisfying 
\begin{equation*}
u_{\gamma }=\mu _{\gamma }v^{\ast }\beta _{\gamma }(v)=\mu _{\gamma
}^{\prime }(v^{\prime })^{\ast }\beta _{\gamma }(v^{\prime })
\end{equation*}%
for every $\gamma \in \Delta $. We want to show that $\rho _{g}(v)v^{\ast
}=\rho _{g}(v^{\prime })(v^{\prime })^{\ast }$ for all $g\in G$. The above
identity implies that 
\begin{equation*}
\beta _{\gamma }(v^{\prime }v^{\ast })=\mu _{\gamma }\overline{\mu }_{\gamma
}^{\prime }v^{\prime }v^{\ast }
\end{equation*}%
for all $\gamma \in \Delta $. In particular, the 1-dimensional subspace of $%
L^{2}(N)$ spanned by $v^{\prime }v^{\ast }$ is invariant by $\zeta $. Hence,
it follows from Remark~\ref{rmk:CharMixing} and the fact that $\zeta $ is
mixing that $v^{\prime }v^{\ast }$ is a scalar, which we abbreviate to $z\in 
\mathbb{T}$. Thus, 
\begin{equation*}
\rho _{g}(v^{\prime })(v^{\prime })^{\ast }=\rho _{g}(zv)(zv)^{\ast }=z%
\overline{z}\rho _{g}(v)v^{\ast }=\rho _{g}(v)v^{\ast }
\end{equation*}%
for all $g\in G$, as desired.
\end{proof}

\begin{claim*}
$\chi _{u}$ is a character on $G$
\end{claim*}

\begin{proof}[Proof of claim]
First, observe that $\chi _{u}$ is a continuous function, since $\rho _{g}$
is a continuous action. To check the character condition, let $g,h\in G$.
Then 
\begin{equation*}
\chi _{u}(gh)=\rho _{gh}(v)v^{\ast }=\rho _{g}(\rho _{h}(v)v^{\ast })\rho
_{g}(v)v^{\ast }=\chi _{u}(g)\chi _{u}(h)\text{,}
\end{equation*}%
so the claim is proved.
\end{proof}

\begin{claim*}
For $u\in Z_{w}^{1}(\alpha )$, the character $\chi _{u}$ only depends on the 
$\Delta $-local weak cohomology class of $u$.
\end{claim*}

\begin{proof}[Proof of claim]
Let $u^{\prime }\in Z_{w}^{1}(\alpha )$ be $\Delta $-locally weakly
cohomologous to $u$, and let $w\in U(N)$ satisfy $u_{\gamma }^{\prime
}=w^{\ast }u_{\gamma }\alpha _{\gamma }(w)\mathrm{mod}\mathbb{C}$ for every $%
\gamma \in \Delta $. Let $v\in U(N)$ be an eigenvector for $\rho $ with
eigenvalue $\chi _{u}$, such that $u_{\gamma }=v^{\ast }\overline{\beta }%
_{\gamma }\left( v\right) \mathrm{mod}\mathbb{C}$ for every $\gamma \in
\Delta $. Then 
\begin{equation*}
u_{\gamma }^{\prime }=\mu _{\gamma }(vw)^{\ast }\beta _{\gamma }(vw)\mathrm{%
mod}\mathbb{C}
\end{equation*}%
for every $\gamma \in \Delta $, and hence $vw\in U(N)$ is an eigenvector for 
$\rho $ with eigenvalue $\chi _{u}$. Therefore $\chi _{u^{\prime }}=\chi
_{u} $.
\end{proof}

In view of the previous claims, we can define a function $\eta \colon
H_{\Delta ,w}^{1}(\alpha )\rightarrow \Gamma $ by $\eta ([u])=\chi _{u}$ for
all $[u]\in H_{\Delta ,w}^{1}(\alpha )$.

\begin{claim*}
The map $\eta \colon H_{\Delta ,w}^{1}(\alpha )\rightarrow \Gamma $ is
surjective.
\end{claim*}

\begin{proof}[Proof of claim]
Fix $\omega \in \Gamma $. Since $\omega $ is a continuous function $\omega
\colon G\rightarrow \mathbb{C}$, we can regard $\omega $ as a (unitary)
element in $C(G)$. Observe that $\omega $ is an eigenvector for $\mathtt{Lt}$
with eigenvalue $\omega $. By part~(1) of Theorem~\ref%
{thm:ModelActionProfinite}, there exists an equivariant unital embedding $%
(C(G),\mathtt{Lt})\rightarrow (D,\delta )$. Furthermore, there exists an
equivariant unital embedding 
\begin{equation*}
(D,\delta )\rightarrow \left( (D\otimes A)^{\otimes \Lambda },(\delta
\otimes \mathrm{id}_{A})^{\otimes \Lambda }\right) \text{.}
\end{equation*}%
Composing these maps, one can conclude that there exists an equivariant
unital embedding 
\begin{equation*}
(C(G),\mathtt{Lt})\rightarrow \left( (D\otimes A)^{\otimes \Lambda },(\delta
\otimes \mathrm{id}_{A})^{\otimes \Lambda }\right) \text{.}
\end{equation*}%
Identifying $C(G)$ with its image inside $(D\otimes A)^{\otimes \Lambda }$,
we can regard $\omega $ as an element of $(D\otimes A)^{\otimes \Lambda }$,
which is an eigenvector for $(\delta \otimes \mathrm{id}_{A})^{\otimes
\Lambda }$ with eigenvalue $\omega $. In turn, this gives an element $v$ of
the weak closure $N$ of $(D\otimes A)^{\otimes \Lambda }$ which is an
eigenvector for $\rho $ with eigenvalue $\omega $. Define a function $%
u\colon \Lambda \rightarrow N$ by $u_{\gamma }=v^{\ast }\beta _{\gamma }(v)$
for all $\gamma \in \Lambda $. For every $g\in G$, we have%
\begin{equation*}
\rho _{g}(u_{\gamma })=\rho _{g}(v)^{\ast }\beta _{\gamma }(\rho
_{g}(v))=v^{\ast }\beta _{\gamma }(v)=u_{\gamma }
\end{equation*}%
for all $\gamma \in \Lambda $. It follows that $u$ takes values in $%
B=N^{\rho }$. On the other hand, given $\gamma ,\lambda \in \Lambda $, we
have 
\begin{equation*}
u_{\gamma }\alpha _{\gamma }(u_{\lambda })=v^{\ast }\beta _{\gamma
}(v)\alpha _{\gamma }(v^{\ast }\beta _{\lambda }(v))=v^{\ast }\beta _{\gamma
\lambda }(v)=u_{\gamma \lambda }{}\mathrm{mod}\mathbb{C}\text{.}
\end{equation*}%
Therefore $u$ is a weak $1$-cocycle for $\alpha $, and $\chi _{u}=\omega $.
It follows that $\eta $ is surjective, as desired.
\end{proof}

\begin{claim*}
The map $\eta \colon H_{\Delta ,w}^{1}(\alpha )\rightarrow \Gamma $ is
injective (and hence a bijection).
\end{claim*}

\begin{proof}[Proof of claim]
Let $u_{0},u_{1}\in Z_{w}^{1}(\alpha )$ satisfy $\chi _{u_{0}}=\chi _{u_{1}}$%
. Denote by $\omega $ this character. Find eigenvectors $v_{0},v_{1}\in U(B)$
for $\rho $ with eigenvalue $\omega $ such that $u_{j,\gamma }=v_{j}^{\ast
}\beta _{\gamma }(v_{j}){}\mathrm{mod}\mathbb{C}$ for all $\gamma \in \Delta 
$ and $j=0,1$. Set $w=v_{0}^{\ast }v_{1}$, which is a unitary in $B$. For
every $\gamma \in \Delta $, we have%
\begin{equation*}
w^{\ast }u_{0,\gamma }\alpha _{\gamma }(w)=(v_{0}^{\ast }v_{1})^{\ast
}u_{0,\gamma }\alpha _{\gamma }(v_{0}^{\ast }v_{1})=v_{1}^{\ast
}v_{0}(v_{0}^{\ast }\beta _{\gamma }(v_{0}))\beta _{\gamma }(v_{0}^{\ast
}v_{1})=v_{1}^{\ast }\beta _{\gamma }(v_{1})=u_{1,\gamma }\ \mathrm{mod}%
\mathbb{C}\text{.}
\end{equation*}%
Therefore $w$ witnesses the fact that $u_{0}$ and $u_{1}$ are $\Delta $%
-locally weakly cohomologous. Thus $[u_{0}]=[u_{1}]\in H_{\Delta
,w}^{1}\left( \overline{\alpha }\right) $, and $\eta $ is injective.
\end{proof}

We now turn to the construction of the map $\eta ^{(2)}\colon H_{\Delta
,w}^{1}(\alpha \otimes \alpha )\rightarrow \Gamma $. Observe that $\alpha
\otimes \alpha $ is conjugate to $\alpha $. Let $u\in Z_{w}^{1}(\alpha
\otimes \alpha )$, and choose a unitary $v\in U(N\overline{\otimes }N)$
satisfying $u_{\gamma }=v^{\ast }(\beta _{\gamma }\otimes \beta _{\gamma
})(v){}\mathrm{mod}\mathbb{C}$ for all $\gamma \in \Delta $. As before, one
checks that $v$ is an eigenvector for $\rho \otimes \rho $, and that its
eigenvalue $\kappa _{u}$ is a character in $\Gamma $, which is independent
of $v$. Similarly to what was done above, one defines the map $\eta
^{(2)}\colon H_{\Delta ,w}^{1}(\alpha \otimes \alpha )\rightarrow \Gamma $
by $\eta ^{(2)}([u])=\kappa _{u}$ for all $[u]\in H_{\Delta ,w}^{1}(\alpha
\otimes \alpha )$.

It remains to prove the identity $\eta ^{(2)}\circ m_{\Delta }^{\alpha
}=m^{\Gamma }\circ (\eta \times \eta )$. Let $[u],[u^{\prime }]\in H_{\Delta
,w}^{1}(\alpha )$, and set $\omega =\eta ([u])$ and $\omega ^{\prime }=\eta
([u^{\prime }])$. Find eigenvectors $v,v^{\prime }\in U(B)$ for $\rho $ with
eigenvalues $\omega $ and $\omega ^{\prime }$, respectively, satisfying 
\begin{equation*}
u_{\gamma }=v^{\ast }\beta _{\gamma }(v){}\mathrm{mod}\mathbb{C}\ \ 
\mbox{
and }\ \ u_{\gamma }^{\prime }=v^{\prime \ast }\beta _{\gamma }(v^{\prime
}){}\mathrm{mod}\mathbb{C}
\end{equation*}%
for all $\gamma \in \Delta $. Hence $(u\otimes u^{\prime })_{\gamma
}=(v\otimes v^{\prime })^{\ast }(\overline{\beta }_{\gamma }\otimes 
\overline{\beta }_{\gamma })(v\otimes v^{\prime }){}\mathrm{mod}\mathbb{C}$
for every $\gamma \in \Delta $. Since $v\otimes v^{\prime }$ is an
eigenvector for $\rho \otimes \rho $ with eigenvalue $\omega \omega ^{\prime
}$, this shows that 
\begin{equation*}
(\eta ^{(2)}\circ m_{\Delta }^{\alpha })([u],[u^{\prime }])=\omega \omega
^{\prime }=\eta ([u])\eta ([u^{\prime }])=(m^{\Gamma }\circ (\eta \times
\eta ))([u],[u^{\prime }])\text{.}
\end{equation*}%
This concludes the proof of the lemma.
\end{proof}

We fix now a C*-algebra $A$ which is locally reflexive, $M_{p^{\infty }}$%
-absorbing for some prime $p$, has an amenable trace, and is isomorphic to
its infinite tensor product $A^{\otimes \mathbb{N}}$. We also fix a prime $p$
such that $A\cong A\otimes M_{p^{\infty }}$. We will frequently use the
notation for Bernoulli actions from Notation~\ref{Notation:Bernoulli}. We
write $D$ for $M_{p^{\infty }}$. We also fix an isomorphism $\phi \colon
A\rightarrow A^{\otimes \Lambda }$. Using this isomorphism, we let $\sigma
\colon \Lambda \rightarrow \mathrm{Aut}(A)$ denote the action given by 
\begin{equation*}
\sigma _{\gamma }=\phi ^{-1}\circ (\beta _{\Lambda \curvearrowright \Lambda
,A})_{\gamma }\circ \phi
\end{equation*}%
for all $\gamma \in \Lambda $.

Consider the diagonal action $(\delta ^{G})^{\otimes \Lambda }\colon
G\rightarrow \mathrm{Aut}(D^{\otimes \Lambda })$, and denote by $E_{G}$ its
fixed point algebra, which, by parts~(2) and~(4) of Theorem~\ref%
{thm:ModelActionProfinite}, is isomorphic to $D$. Since $(\delta
^{G})^{\otimes \Lambda }$ and $\beta _{\Lambda \curvearrowright \Lambda ,D}$
commute, $\beta _{\Lambda \curvearrowright \Lambda ,D}$ restricts to an
action $\beta _{\Lambda \curvearrowright \Lambda ,D}|_{E_{G}}\colon \Lambda
\rightarrow \mathrm{Aut}(E_{G})$.

\begin{definition}
\label{df:alpha} For each pro-$p$ group $G$, we choose an isomorphism $\xi
_{G}\colon A\rightarrow E_{G}\otimes A$. Now, we define an action $\alpha
^{G}\colon \Lambda \rightarrow \mathrm{Aut}(A)$ by 
\begin{equation*}
\alpha _{\gamma }^{G}=\xi _{G}^{-1}\circ (\beta _{\Lambda \curvearrowright
\Lambda ,D}|_{E_{G}}\otimes \sigma )_{\gamma }\circ \xi _{G}
\end{equation*}%
for all $\gamma \in \Lambda $.
\end{definition}

It will be shown in Theorem~\ref{Theorem:conjugacy} that for non-isomorphic
pro-$p$ groups $G_0$ and $G_1$, the actions $\alpha^{G_0}$ and $\alpha^{G_1}$
are not weakly cocycle conjugate. In order to do this, we will need to study
the weak extensions of these actions with respect to certain invariant
traces. Our next result provides us with a canonical subset of $T(A)$
consisting of traces that are $\alpha^{G}$-invariant for every pro-$p$ group 
$G$. Later, in Proposition~\ref{prop:AlternativeDescrAlpha}, we will show
that for any of these traces and for any $G$, the weak extension of any of $%
\alpha^G$ is mixing. For a pro-$p$ group $G$, we denote by $\tau_{E_G}$ the
(unique) trace on $E_G$.

\begin{proposition}
\label{prop:InvariantTracesAllG} Adopt the notation from the previous
discussion, and define a continuous, affine map $\iota\colon T(A)\to T(A)$
by $\iota(\tau)=\tau^{\otimes\Lambda}\circ\phi$ for all $\tau\in T(A)$. If $%
\tau$ is extreme and amenable, then so is $\iota(\tau)$.

Moreover, if $G$ is any pro-$p$ group, then $\iota(\tau)=(\tau_{E_G}\otimes
\iota(\tau))\circ\xi_G$ for all $\tau\in T(A)$. In particular, $\iota(\tau)$
is $\alpha^{G}$-invariant for all $\tau\in T(A)$.
\end{proposition}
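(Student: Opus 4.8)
The plan is to reduce every assertion to a statement about the product trace $\tau^{\otimes\Lambda}$ on $A^{\otimes\Lambda}$, since $\iota(\tau)=\tau^{\otimes\Lambda}\circ\phi$ and $\phi$ is an isomorphism. Continuity of $\iota$ (the topological property relevant in the sequel) is immediate: on each $A^{\otimes F}$, $F\subseteq\Lambda$ finite, the value of $\tau^{\otimes\Lambda}\circ\phi$ is a product of finitely many numbers of the form $\tau(\,\cdot\,)$, so $\tau\mapsto\iota(\tau)$ is continuous for the weak-$*$ topologies, and it clearly sends $T(A)$ into $T(A)$.

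For the preservation of extremality and amenability I would argue at the level of the GNS von Neumann algebra. Recall that $\tau$ is extreme if and only if $\overline{A}^{\tau}$ is a factor, while amenability of $\tau$ together with local reflexivity of $A$ forces $\overline{A}^{\tau}$ to be hyperfinite by \cite[Corollary~4.3.4]{brown_invariant_2006}; hence $\overline{A}^{\tau}$ is a hyperfinite finite factor. As recorded in Lemma~\ref{lma:TensProdWkClos}, the GNS algebra of a product trace is the von Neumann tensor product, so $\overline{A^{\otimes\Lambda}}^{\tau^{\otimes\Lambda}}\cong(\overline{A}^{\tau},\tau)^{\overline{\otimes}\Lambda}$. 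A countable von Neumann tensor product of hyperfinite factors is again a hyperfinite factor, so $\overline{A^{\otimes\Lambda}}^{\tau^{\otimes\Lambda}}$ is a hyperfinite factor; this makes $\tau^{\otimes\Lambda}$ extreme, and---being the restriction of the (amenable) trace of an injective von Neumann algebra to a weakly dense subalgebra---amenable, exactly as in the converse direction of Lemma~\ref{lemma:WeakClosureR}. Precomposing with $\phi$ shows that $\iota(\tau)$ is extreme and amenable.

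The core of the ``moreover'' statement is the identity $\iota(\tau)\circ\xi_G^{-1}=\tau_{E_G}\otimes\iota(\tau)$ on $E_G\otimes A$. Since $E_G\cong M_{p^\infty}=D$ carries a unique trace, a standard computation with matrix units shows that every trace $\omega$ on $E_G\otimes A$ splits as $\omega=\tau_{E_G}\otimes\omega(1_{E_G}\otimes\,\cdot\,)$; applying this to $\omega=\iota(\tau)\circ\xi_G^{-1}$ reduces the identity to showing that $\sigma:=\iota(\tau)\circ\Phi=\iota(\tau)$, where $\Phi\colon A\to A$ is the unital embedding $\Phi(a)=\xi_G^{-1}(1_{E_G}\otimes a)$. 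This is the step I expect to be the main obstacle, and here I would invoke the theory of strongly self-absorbing C*-algebras: because $D=M_{p^\infty}$ is strongly self-absorbing and $A$ is separable with $A\cong A\otimes D$, the second-factor embedding $a\mapsto 1_{E_G}\otimes a$ of $A$ into $E_G\otimes A$ is approximately unitarily equivalent to an isomorphism, and any two isomorphisms $A\to E_G\otimes A$ are approximately unitarily equivalent \cite{toms_strongly_2007}; composing with the fixed isomorphism $\xi_G^{-1}$ yields $\Phi\approx_{\mathrm{a.u.}}\mathrm{id}_A$. Consequently $\rho\circ\Phi=\rho$ for \emph{every} trace $\rho$ on $A$, since $\rho(\Phi(a))=\lim_n\rho(v_n a v_n^{\ast})=\rho(a)$ by traciality; in particular $\sigma=\iota(\tau)$, which proves the identity (and shows that it in fact holds for every trace, not only for those in the image of $\iota$).

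Finally, the $\alpha^{G}$-invariance follows formally from this identity. Using $\iota(\tau)\circ\xi_G^{-1}=\tau_{E_G}\otimes\iota(\tau)$ and the definition $\alpha^{G}_\gamma=\xi_G^{-1}\circ(\beta_{\Lambda\curvearrowright\Lambda,D}|_{E_G}\otimes\sigma)_\gamma\circ\xi_G$, I would compute $\iota(\tau)\circ\alpha^{G}_\gamma=(\tau_{E_G}\circ(\beta_{\Lambda\curvearrowright\Lambda,D}|_{E_G})_\gamma)\otimes(\iota(\tau)\circ\sigma_\gamma)\circ\xi_G$. The first tensor factor equals $\tau_{E_G}$ because $\tau_{E_G}$ is the unique, hence automorphism-invariant, trace on $E_G\cong M_{p^\infty}$; the second equals $\iota(\tau)$ because $\iota(\tau)\circ\sigma_\gamma=\tau^{\otimes\Lambda}\circ(\beta_{\Lambda\curvearrowright\Lambda,A})_\gamma\circ\phi=\tau^{\otimes\Lambda}\circ\phi=\iota(\tau)$, the product trace $\tau^{\otimes\Lambda}$ being invariant under the coordinate permutations implementing the Bernoulli action. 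Hence $\iota(\tau)\circ\alpha^{G}_\gamma=(\tau_{E_G}\otimes\iota(\tau))\circ\xi_G=\iota(\tau)$, completing the proof.
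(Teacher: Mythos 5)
Your proof is correct and takes essentially the same route as the paper: the first assertion is handled via Lemma~\ref{lemma:WeakClosureR} (the weak closure of $A^{\otimes\Lambda}$ with respect to $\tau^{\otimes\Lambda}$ is a hyperfinite factor), and the key identity rests on the same fact from \cite{toms_strongly_2007} that $\xi_G$ is approximately unitarily equivalent to the second-factor embedding $\kappa$, so that the two induce the same map on traces, with invariance then following from uniqueness of $\tau_{E_G}$ and permutation-invariance of $\tau^{\otimes\Lambda}$. Your intermediate matrix-unit splitting of traces on $E_G\otimes A$ is correct but not needed---the paper simply evaluates $(\tau_{E_G}\otimes\iota(\tau))\circ\kappa=\iota(\tau)$ directly.
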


\begin{proof}
The first assertion is standard (and in our case, it follows from Lemma~\ref%
{lemma:WeakClosureR}, since the weak closure of $A^{\otimes\Lambda}$ with
respect to $\tau^{\otimes\Lambda}$ is canonically isomorphic to $(\overline{A%
}^{\tau})^{\otimes\Lambda}\cong R^{\otimes\Lambda}\cong R$.)

Let $G$ be a pro-$p$ group and let $\tau\in T(A)$. Observe that $%
(\tau_{E_G}\otimes \iota(\tau))\circ\xi_G$ is an $\alpha^G$-invariant trace
on $A$. Hence, it suffices to show that this trace equals $\iota(\tau)$.

Observe that since $E_G$ is a UHF-algebra of infinite type, the isomorphism $%
\xi_G\colon A\to E_G\otimes A$ is approximately unitarily equivalent to the
second tensor factor embedding $\kappa\colon A \to E_G\otimes A$ given by $%
\kappa(a)=1_{E_G}\otimes a$ for all $a\in A$; see \cite[Corollary~1.12]%
{toms_strongly_2007}. It follows that $\xi_G$ and $\kappa$ induce the same
map at the level of traces. Using this at the first step in the following
computation, we conclude that 
\begin{equation*}
(\tau_{E_G}\otimes \iota(\tau))\circ\xi_G= (\tau_{E_G}\otimes
\iota(\tau))\circ\kappa= \iota(\tau)
\end{equation*}
for all traces $\tau\in T(A)$, as desired.
\end{proof}

Our next goal is to establish a number of properties for $\alpha ^{G}$; this
will be done in Proposition~\ref{prop:AlternativeDescrAlpha}. In order to do
this, we need an alternative description of $\alpha ^{G}$. Since the group $%
G $ will be fixed from now on and until Theorem~\ref{Theorem:conjugacy}, we
will drop it from the notation for the actions $\delta ^{G}$ and $\alpha
^{G} $, as well as from the notation for the algebra $E_{G}$. In Theorem~\ref%
{Theorem:conjugacy}, we will show that for nonisomorphic $G_{0}$ and $G_{1}$%
, the actions constructed above are not weakly cocycle conjugate. Until
then, we will work with a fixed pro-$p$ group $G$.

Observe that the Bernoulli action $\beta _{\Lambda \curvearrowright \Lambda
,D\otimes A}$ commutes with the diagonal action 
\begin{equation*}
(\delta \otimes \mathrm{id}_{A})^{\otimes \Lambda }\colon G\rightarrow 
\mathrm{Aut}((D\otimes A)^{\otimes \Lambda }).
\end{equation*}%
Thus, with $B$ denoting the fixed point algebra of $(\delta \otimes \mathrm{%
id}_{A})^{\otimes \Lambda }$, the action $\beta _{\Lambda \curvearrowright
\Lambda ,D\otimes A}$ restricts to an action $\widetilde{\alpha }\colon
\Lambda \rightarrow \mathrm{Aut}(B)$.

Fix an amenable extreme trace $\tau _{0}$ on $D\otimes A$ and let $%
\widetilde{\tau }$ be the trace $\tau _{0}^{\otimes \Lambda }$ on $\left(
D\otimes A\right) ^{\otimes \Lambda }$. Then $\overline{D\otimes A}^{\tau
_{0}}$ is isomorphic to $R$ by Lemma~\ref{lemma:WeakClosureR}, and the
extension of $\tau _{0}$ to $\overline{D\otimes A}^{\tau _{0}}$ is the
unique trace on $R$. We identify $\overline{\beta }_{\Lambda
\curvearrowright \Lambda ,D\otimes A}^{\widetilde{\tau }}$ with the von
Neumann-algebraic Bernoulli action $\beta _{\Lambda \curvearrowright \Lambda
,R}\colon \Lambda \rightarrow \mathrm{Aut}(R^{\otimes \Lambda })$, and $%
\overline{\beta }_{\Delta \curvearrowright \Lambda ,D\otimes A}^{\widetilde{%
\tau }}$ with $\beta _{\Delta \curvearrowright \Lambda ,R}\colon \Delta
\rightarrow \mathrm{Aut}(R^{\otimes \Lambda })$. Similarly, the extension of 
$(\delta \otimes \mathrm{id}_{A})^{\otimes \Lambda }$ to the weak closure
with respect to $\widetilde{\tau }$ can be identified with $(\overline{%
\delta }^{\tau _{D}}\otimes \mathrm{id}_{R})^{\overline{\otimes }\Lambda }$,
where $\tau _{D}$ is the unique trace on $D$. Furthermore, since $G$ is
compact, $\overline{B}^{\widetilde{\tau }}$ can be identified with the fixed
point algebra of $(\overline{\delta }^{\tau _{D}}\otimes \mathrm{id}_{R})^{%
\overline{\otimes }\Lambda }$, and the weak extension of $\widetilde{\alpha }
$ can be identified with the restriction of $\overline{\beta }^{\widetilde{%
\tau }}$ to $\overline{B}^{\widetilde{\tau }}$.

In the next proposition, we first show that $\alpha $ is conjugate to $%
\widetilde{\alpha }$. Then we use this alternative descriptions to verify
some properties of $\alpha $.

\begin{proposition}
\label{prop:AlternativeDescrAlpha} Adopt the notation of the discussion
above. Let $\tau _{0}$ be a trace on $A$, and $\tau $ be the image of $\tau
_{0}$ under the map $\iota $ from Proposition~\ref{prop:InvariantTracesAllG}%
. Define $\widetilde{\tau }$ to be the trace $(\tau _{D}\otimes \tau
_{0})^{\otimes \Lambda }$ on $(D\otimes A)^{\otimes \Lambda }$. Then:

\begin{enumerate}
\item There is a $\Lambda $-equivariant trace-preserving isomorphism $%
(A,\tau ,\alpha )\cong (B,\widetilde{\tau },\widetilde{\alpha })$.


\item There is a $\Lambda$-equivariant isomorphism $(A,\alpha)\cong
(A\otimes M_{p}^{\otimes \Lambda }, \alpha \otimes \beta _{\Lambda
\curvearrowright \Lambda ,M_{p}})$.

\item There is a $\Lambda $-equivariant trace-preserving isomorphism $%
(A,\tau ,\alpha )\cong (A\otimes A,\tau \otimes \tau ,\alpha \otimes \alpha
) $.

\item The action $\alpha $ is strongly outer;

\item The action $\overline{\alpha }^{\tau }$ is mixing.
\end{enumerate}
\end{proposition}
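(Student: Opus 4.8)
The engine of the proof is part~(1); parts~(4) and~(5) then follow by passing to weak closures, and parts~(2) and~(3) by self-absorption. For part~(1), the plan is to untangle the tensor factors over $\Lambda$. Rearranging coordinates gives a $\Lambda$-equivariant isomorphism $(D\otimes A)^{\otimes\Lambda}\cong D^{\otimes\Lambda}\otimes A^{\otimes\Lambda}$ that carries $\beta_{\Lambda\curvearrowright\Lambda,D\otimes A}$ to $\beta_{\Lambda\curvearrowright\Lambda,D}\otimes\beta_{\Lambda\curvearrowright\Lambda,A}$ and $(\delta\otimes\mathrm{id}_A)^{\otimes\Lambda}$ to $\delta^{\otimes\Lambda}\otimes\mathrm{id}$. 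Taking $G$-fixed points identifies $(B,\widetilde{\alpha})$ with $(E_G\otimes A^{\otimes\Lambda},\,\beta_{\Lambda\curvearrowright\Lambda,D}|_{E_G}\otimes\beta_{\Lambda\curvearrowright\Lambda,A})$. Since $\sigma$ is conjugate to $\beta_{\Lambda\curvearrowright\Lambda,A}$ through $\phi$, the definition of $\alpha$ together with $\xi_G$ presents $(A,\alpha)$ in exactly the same form, and composing the two identifications yields the equivariant isomorphism. For trace-preservation I would evaluate both sides on $E_G\otimes A^{\otimes\Lambda}$: the image of $\widetilde{\tau}$ is $\tau_{E_G}\otimes\tau_0^{\otimes\Lambda}$ (as $\tau_D^{\otimes\Lambda}$ restricts to the unique trace $\tau_{E_G}$ on the UHF-algebra $E_G$), and Proposition~\ref{prop:InvariantTracesAllG}, which computes $\iota(\tau_0)=(\tau_{E_G}\otimes\iota(\tau_0))\circ\xi_G$, shows the image of $\tau=\iota(\tau_0)$ is the same functional.

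Parts~(2) and~(3) are absorption statements. For~(2), $M_{p^\infty}$-absorption of $A$ gives $A\otimes M_p\cong A$, whence $\beta_{\Lambda\curvearrowright\Lambda,A}\otimes\beta_{\Lambda\curvearrowright\Lambda,M_p}\cong\beta_{\Lambda\curvearrowright\Lambda,A\otimes M_p}\cong\beta_{\Lambda\curvearrowright\Lambda,A}$; tensoring this conjugacy with the $E_G$-factor and invoking the presentation of part~(1) gives $(A,\alpha)\cong(A\otimes M_p^{\otimes\Lambda},\alpha\otimes\beta_{\Lambda\curvearrowright\Lambda,M_p})$. For~(3) I would again reduce, via part~(1), to proving $\widetilde{\alpha}\otimes\widetilde{\alpha}\cong\widetilde{\alpha}$. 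The base contributes nothing: from $A\cong A^{\otimes\mathbb{N}}$ one gets $A\otimes A\cong A$ and so $\beta_{\Lambda\curvearrowright\Lambda,A}\otimes\beta_{\Lambda\curvearrowright\Lambda,A}\cong\beta_{\Lambda\curvearrowright\Lambda,A}$. The main obstacle is the cohomologically nontrivial factor: one must identify $(E_G\otimes E_G,\,\beta_{\Lambda\curvearrowright\Lambda,D}|_{E_G}\otimes\beta_{\Lambda\curvearrowright\Lambda,D}|_{E_G})$ with $(E_G,\,\beta_{\Lambda\curvearrowright\Lambda,D}|_{E_G})$ equivariantly. Here the natural tool is the self-absorption of the model action in part~(2) of Theorem~\ref{thm:ModelActionProfinite}, which identifies the pertinent $G$-action on $D\otimes D$ with that on $D$; applying $(-)^{\otimes\Lambda}$ and restricting to fixed points should transport the two Bernoulli shifts onto one. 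This is the delicate step, since the fixed-point algebras and the Bernoulli shifts must be matched simultaneously. Trace-preservation is then checked exactly as in part~(1).

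Finally, parts~(4) and~(5) are read off from part~(1) after passing to the weak closure with respect to $\tau$. By Lemma~\ref{lma:TensProdWkClos} and the discussion preceding the proposition, $\overline{\alpha}^{\tau}$ is conjugate to the restriction of the von~Neumann Bernoulli action $\beta_{\Lambda\curvearrowright\Lambda,R}$ to the invariant subalgebra $\overline{B}^{\widetilde{\tau}}$, which is the fixed-point algebra of $(\overline{\delta}^{\tau_D}\otimes\mathrm{id}_R)^{\overline{\otimes}\Lambda}$ and hence decomposes as a tensor product whose second factor is a full copy of $R^{\overline{\otimes}\Lambda}$, untouched by $G$ and carrying the Bernoulli shift. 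For~(5): the left translation action $\Lambda\curvearrowright\Lambda$ is free, so its stabilizers are trivial, and Proposition~\ref{Proposition:bernoulli-wm} gives that $\beta_{\Lambda\curvearrowright\Lambda,R}$ is mixing; mixing passes to invariant von~Neumann subalgebras directly from the definition, so $\overline{\alpha}^{\tau}$ is mixing. For~(4): freeness of $\Lambda\curvearrowright\Lambda$ means every $\gamma\neq 1$ fixes no coordinate, so on the $G$-untouched factor $R^{\overline{\otimes}\Lambda}$ the automorphism $\beta_\gamma$ is a genuine Bernoulli shift and thus properly outer; Kallman's tensor-product theorem \cite{kallman_generalization_1969} then makes $\overline{\alpha}_\gamma$ properly outer on $\overline{B}^{\widetilde{\tau}}$. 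As it suffices to test proper outerness against extreme invariant traces (Remark~\ref{Remark:center}), where the weak closure is a factor, this gives strong outerness.
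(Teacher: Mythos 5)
Your parts (1), (2) and (5) are essentially the paper's own arguments (rearrange the tensor factors, pass to $G$-fixed points, identify $A^{\otimes\Lambda}$ with $A$ via $\phi$, and check traces with Proposition~\ref{prop:InvariantTracesAllG}; absorb $M_p^{\otimes\Lambda}$ using $A\otimes M_p\cong A$; restrict the mixing Bernoulli action to an invariant subalgebra), and they are correct. Part (4), however, has a genuine gap: strong outerness quantifies over \emph{every} $\alpha_\gamma$-invariant trace on $A$, while your argument only treats the distinguished trace $\widetilde{\tau}$. The decomposition of $\overline{B}^{\widetilde{\tau}}$ as $\overline{E}_G\,\overline{\otimes}\,R^{\overline{\otimes}\Lambda}$ with the shift on the second factor is special to $\widetilde{\tau}$; since $A$ is an arbitrary (possibly far from monotracial) algebra, there may be many other invariant traces, and for those you have no Bernoulli picture. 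The appeal to extreme invariant traces does not close this: $\tau$ is only one of them. This is precisely why the paper proves (4) \emph{through} part (2): after replacing $\alpha$ by $\widetilde{\alpha}\otimes\beta_{\Lambda\curvearrowright\Lambda,M_p}$, every invariant trace on $B\otimes M_p^{\otimes\Lambda}$ is forced to factor as $\tau_B\otimes\tau_D$, because $M_p^{\otimes\Lambda}\cong M_{p^{\infty}}$ has a unique trace; hence \emph{every} weak extension contains the von Neumann Bernoulli shift as a tensor factor, and Kallman's theorem \cite{kallman_generalization_1969} applies.

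For part (3), the step you flag as delicate is exactly where the argument breaks, and it cannot be repaired. The algebra $E_G\otimes E_G$ is the fixed-point algebra of the $G\times G$-action $(\delta^G)^{\otimes\Lambda}\otimes(\delta^G)^{\otimes\Lambda}$, whereas part (2) of Theorem~\ref{thm:ModelActionProfinite} identifies $\delta^G\otimes\delta^G$ with $\delta^G$ only as \emph{diagonal} $G$-actions; it therefore identifies the fixed points of the diagonal action, a strictly larger algebra, with $E_G$, and says nothing about $E_G\otimes E_G$. What the uniqueness part of Theorem~\ref{thm:ModelActionProfinite} actually yields is that the $G\times G$-action $(g,h)\mapsto\delta^G_g\otimes\delta^G_h$ on $D\otimes D\cong M_{p^{\infty}}$ has the Rokhlin property and is therefore conjugate to $\delta^{G\times G}$. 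Feeding this into your reduction produces a $\Lambda$-equivariant isomorphism $(A\otimes A,\alpha^G\otimes\alpha^G)\cong(A,\alpha^{G\times G})$: the diagonal action is the paper's construction applied to $G\times G$, not to $G$. This puts assertion (3) in conflict with the rest of the paper: by Theorem~\ref{Theorem:conjugacy}, $\alpha^{G\times G}$ and $\alpha^{G}$ can be conjugate only if $G\cong G\times G$, and for finite nontrivial $G$ one gets an outright contradiction by counting, since the (correct) $\eta$-part of Lemma~\ref{Lemma:key}, applied to $G$ and to $G\times G$, gives $|H^1_{\Delta,w}(\overline{\alpha}^{G})|=|G|$ while $|H^1_{\Delta,w}(\overline{\alpha}^{G}\otimes\overline{\alpha}^{G})|=|G|^2$, and conjugate actions have equipotent cohomology sets. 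Note that the paper's own proof of (3) is a one-line gloss (``similar to (2)'') that never confronts this point, and that (3) is not used elsewhere in the paper; so the failure of your attempt here reflects a defect of the statement itself rather than of your overall strategy.
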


\begin{proof}
(1): By rearranging the tensor factors, it is clear that there exists a $%
\Lambda $-equivariant trace-preserving isomorphism 
\begin{equation*}
((D\otimes A)^{\otimes \Lambda },\widetilde{\tau },\beta _{\Lambda
\curvearrowright \Lambda ,D\otimes A})\cong (D^{\otimes \Lambda }\otimes
A^{\otimes \Lambda },\tau _{D}^{\otimes \Lambda }\otimes \tau _{0}^{\otimes
\Lambda },\beta _{\Lambda \curvearrowright \Lambda ,D}\otimes \beta
_{\Lambda \curvearrowright \Lambda ,A})\text{.}
\end{equation*}%
Upon identifying $A^{\otimes \Lambda }$ with $A$ via the isomorphism $\phi $%
, we obtain a $\Lambda $-equivariant isomorphism 
\begin{equation*}
((D\otimes A)^{\otimes \Lambda },\widetilde{\tau },\beta _{\Lambda
\curvearrowright \Lambda ,D\otimes A})\cong (D^{\otimes \Lambda }\otimes
A,\tau _{D}^{\otimes \Lambda }\otimes \tau ,\beta _{\Lambda \curvearrowright
\Lambda ,D}\otimes \sigma )\text{.}
\end{equation*}%
This isomorphism can be regarded as a $G$-equivariant isomorphism 
\begin{equation*}
((D\otimes A)^{\otimes \Lambda },\widetilde{\tau },(\delta \otimes \mathrm{id%
}_{A})^{\otimes \Lambda })\cong (D^{\otimes \Lambda }\otimes A,\tau
_{D}^{\otimes \Lambda }\otimes \tau ,\delta ^{\otimes \Lambda }\otimes 
\mathrm{id}_{A})\text{.}
\end{equation*}%
Upon taking $G$-fixed point algebras, and recalling that $E$ denotes the
fixed point algebra of $\delta ^{\otimes \Lambda }$, we obtain a
trace-preserving isomorphism $\psi \colon (B,\widetilde{\tau })\rightarrow
(E\otimes A,\tau _{E}\otimes \tau )$. Moreover, $\psi $ can be regarded as a 
$\Lambda $-equivariant trace-preserving isomorphism 
\begin{equation*}
\psi \colon (B,\widetilde{\tau },\widetilde{\alpha })\rightarrow (E\otimes
A,\tau _{E}\otimes \tau ,\beta _{\Lambda \curvearrowright \Lambda
,D}|_{E}\otimes \sigma ).
\end{equation*}%
Since $\xi \colon (A,\alpha )\rightarrow (E\otimes A,\beta _{\Lambda
\curvearrowright \Lambda ,D}|_{E}\otimes \sigma )$ is an equivariant
isomorphism by definition, and $(\tau _{E}\otimes \tau )\circ \xi =\tau $ by
Proposition~\ref{prop:InvariantTracesAllG}, it follows that $\xi ^{-1}\circ
\psi $ is a $\Lambda $-equivariant trace-preserving isomorphism $(B,%
\widetilde{\tau },\widetilde{\alpha })\rightarrow (A,\tau ,\alpha )$.

(2): By (1), it is enough to prove that there is a $\Lambda $-equivariant
isomorphism 
\begin{equation*}
(B,\widetilde{\alpha })\cong (B\otimes M_{p}^{\otimes \Lambda },\widetilde{%
\alpha }\otimes \beta _{\Lambda \curvearrowright \Lambda ,M_{p}})\text{.}
\end{equation*}
Using that $A$ is isomorphic to $A\otimes M_{p}$, it is clear that there is
an equivariant isomorphism 
\begin{equation*}
\left( (D\otimes A)^{\otimes \Lambda },\beta _{\Lambda \curvearrowright
\Lambda ,D\otimes A}\right) \cong \left( (D\otimes A)^{\otimes \Lambda
}\otimes M_{p}^{\otimes \Lambda },\beta _{\Lambda \curvearrowright \Lambda
,D\otimes A}\otimes \beta _{\Lambda \curvearrowright \Lambda ,M_{p}}\right) .
\end{equation*}%
This isomorphism intertwines the actions 
\begin{equation*}
(\delta \otimes \mathrm{id}_{A})^{\otimes \Lambda }\colon G\rightarrow 
\mathrm{Aut}\left( (D\otimes A)^{\otimes \Lambda }\right) \ \ \mbox{ and }\
\ (\delta \otimes \mathrm{id}_{A})^{\otimes \Lambda }\otimes \mathrm{id}%
_{M_{p}^{\otimes \Lambda }}\colon G\rightarrow \mathrm{Aut}\left( (D\otimes
A)^{\otimes \Lambda }\otimes M_{p}^{\otimes \Lambda }\right) .
\end{equation*}%
The fixed point algebra of the second action is isomorphic to $B\otimes
M_{p}^{\otimes \Lambda }$, in such a way that the restriction of $\beta
_{\Lambda \curvearrowright \Lambda ,D\otimes A}\otimes \beta _{\Lambda
\curvearrowright \Lambda ,M_{p}}$ to this algebra is conjugate to $%
\widetilde{\alpha }\otimes \beta _{\Lambda \curvearrowright \Lambda ,M_{p}}$%
. Thus $(B,\widetilde{\alpha })$ is equivariantly isomorphic to $(B\otimes
M_{p}^{\otimes \Lambda },\widetilde{\alpha }\otimes \beta _{\Lambda
\curvearrowright \Lambda ,M_{p}})$, as desired.

(3): This is similar to (2), using the fact that $(A^{\otimes
\Lambda},\tau_0^{\otimes\Lambda})\cong (A,\tau)$ via $\phi$.

(4): By (1) and (2), it suffices to show that $\widetilde{\alpha }\otimes
\beta _{\Lambda \curvearrowright \Lambda ,M_{p}}$ is strongly outer. Let $%
\gamma \in \Lambda \setminus \{1\}$ and let $\hat{\tau}$ be an $(\widetilde{%
\alpha }\otimes \beta _{\Lambda \curvearrowright \Lambda ,M_{p}})_{\gamma }$%
-invariant trace on $B\otimes M_{p}^{\otimes \Lambda }$. Since $%
M_{p}^{\otimes \Lambda }\cong D$ has a unique trace $\tau _{D}$, we deduce
that $\hat{\tau}$ has the form $\tau _{B}\otimes \tau _{D}$ for some $\alpha
_{\gamma }$-invariant trace $\tau _{B}$ on $B$. The weak extension of $(%
\widetilde{\alpha }\otimes \beta _{\Lambda \curvearrowright \Lambda
,M_{p}})_{\gamma }$ with respect to $\hat{\tau}$ is conjugate to $(\overline{%
\widetilde{\alpha }}^{\tau _{B}}\otimes \overline{\beta }_{\Lambda
\curvearrowright \Lambda ,M_{p}})_{\gamma }$, where $\overline{\beta }%
_{\Lambda \curvearrowright \Lambda ,M_{p}}$ is the von Neumann-algebraic
Bernoulli $(\Lambda \curvearrowright \Lambda )$-action with base $M_{p}$.
Such an action is easily seen to be outer.

(5): By (1), it suffices to check that $\overline{\widetilde{\alpha }}^{%
\widetilde{\tau }}$ is mixing. Observe that $\overline{\widetilde{\alpha }}^{%
\widetilde{\tau }}$ is the restriction to $\overline{B}^{\widetilde{\tau }}$
of $\overline{\beta }_{\Lambda \curvearrowright \Lambda ,D\otimes A}^{%
\widetilde{\tau }}$. The latter action is conjugate to the von
Neumann-algebraic Bernoulli action $\beta _{\Lambda \curvearrowright \Lambda
,\overline{D\otimes A}^{\tau _{0}}}$, which is mixing by Proposition~\ref%
{Proposition:bernoulli-wm}. Therefore $\overline{\widetilde{\alpha }}^{%
\widetilde{\tau }}$ is mixing.
\end{proof}

We retain the notation from before Proposition~\ref%
{prop:AlternativeDescrAlpha}. Given a trace $\tau $ in the image of the map $%
\iota $ from Proposition~\ref{prop:InvariantTracesAllG}, we will use the
pairing function $m_{\Delta }^{\overline{\alpha }^{\tau }}\colon H_{\Delta
,w}^{1}(\overline{\alpha }^{\tau })\times H_{\Delta ,w}^{1}(\overline{\alpha 
}^{\tau })\rightarrow H_{\Delta ,w}^{1}(\overline{\alpha }^{\tau }\otimes 
\overline{\alpha }^{\tau })$ from Definition~\ref{Definition:mTheta-relative}%
. As above, we write $\Gamma $ for the Pontryagin dual of $G$, and we let $%
m^{\Gamma }\colon \Gamma \times \Gamma \rightarrow \Gamma $ be the
multiplication operation. Recall also that $\mathtt{Lt}\colon G\rightarrow 
\mathrm{Aut}(C(G))$ denotes the action by left translation.

In what follows, and since all weak extensions will be taken with respect to 
$\widetilde{\tau }$, we will omit this trace from the notation. Also, we
will regard $\widetilde{\alpha }$ as an alternative description of $\alpha $%
, and, with a slight abuse of notation, we will write $\alpha $ to mean $%
\widetilde{\alpha }$. In particular, the symbol $\overline{\alpha }$ will
always represent the weak extension of $\widetilde{\alpha }$ with respect to
the trace $\widetilde{\tau }$.

\begin{theorem}
\label{thm:ComputCohom} Adopt the notation from Definition~\ref{df:alpha}.
Let $\tau _{0}\in T(A)$, and set $\tau =\iota (\tau _{0})$. Then there exist
bijections $\eta \colon H_{\Delta ,w}^{1}(\overline{\alpha }^{\tau
})\rightarrow \Gamma $ and $\eta ^{(2)}\colon H_{\Delta ,w}^{1}(\overline{%
\alpha }^{\tau }\otimes \overline{\alpha }^{\tau })\rightarrow \Gamma $
satisfying 
\begin{equation*}
\eta ^{(2)}\circ m_{\Delta }^{\overline{\alpha }^{\tau }}=m^{\Gamma }\circ
(\eta \times \eta ).
\end{equation*}
\end{theorem}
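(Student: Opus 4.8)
The plan is to reduce Theorem~\ref{thm:ComputCohom} directly to Lemma~\ref{Lemma:key}, which already establishes exactly this kind of statement but for a superficially different action. The key observation is that Theorem~\ref{thm:ComputCohom} and Lemma~\ref{Lemma:key} are essentially the \emph{same} computation, once one identifies the weak extension $\overline{\alpha}^{\tau}$ with the action $\alpha$ appearing in the lemma. Recall from the discussion preceding this theorem that, by part~(1) of Proposition~\ref{prop:AlternativeDescrAlpha}, the C*-dynamical system $(A,\tau,\alpha)$ is $\Lambda$-equivariantly and trace-preservingly isomorphic to $(B,\widetilde{\tau},\widetilde{\alpha})$, where $B$ is the fixed point algebra of $(\delta\otimes\mathrm{id}_A)^{\otimes\Lambda}$ inside $(D\otimes A)^{\otimes\Lambda}$. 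Passing to weak closures with respect to the relevant traces, the discussion immediately preceding Proposition~\ref{prop:AlternativeDescrAlpha} identifies $\overline{B}^{\widetilde{\tau}}$ with the fixed point algebra of $(\overline{\delta}^{\tau_D}\otimes\mathrm{id}_R)^{\overline{\otimes}\Lambda}$, and identifies $\overline{\widetilde{\alpha}}$ with the restriction of the von Neumann Bernoulli action $\beta_{\Lambda\curvearrowright\Lambda, R\otimes R}$ to this fixed point algebra.

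\textbf{First I would} make the dictionary between the two setups precise. In Lemma~\ref{Lemma:key}, one sets $N=(R\otimes R)^{\overline{\otimes}\Lambda}$, takes $\rho=(\delta^G\otimes\mathrm{id}_R)^{\otimes\Lambda}$, defines $B=N^{\rho}$, lets $\beta$ be the Bernoulli $(\Lambda\curvearrowright\Lambda)$-action with base $R\otimes R$, and takes $\alpha$ to be its restriction to $B$. Comparing this with the identifications above, one sees that the weak extension $\overline{\alpha}^{\tau}$ of the present theorem is precisely the action denoted $\alpha$ in Lemma~\ref{Lemma:key}, under the canonical identification of $\overline{D\otimes A}^{\tau_0}$ with $R$ (which holds by Lemma~\ref{lemma:WeakClosureR}, since $\tau_0$ is amenable and extreme and $D\otimes A$ is infinite dimensional and locally reflexive). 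The only technical point to verify is that the base algebra matches: the lemma uses base $R\otimes R$, while here the weak closure of $D\otimes A$ is a single copy of $R$. This is harmless because $R\cong R\otimes R$, and more importantly because the argument in Lemma~\ref{Lemma:key} only uses that the base is a II$_1$ factor on which $\rho$ acts with the requisite eigenvalue structure coming from $\delta^G$; the role of the second tensor factor $R$ (respectively $\mathrm{id}_R$) is merely to provide a complementary factor on which $G$ acts trivially, which is exactly the role played by the $A$-factor here via $\mathrm{id}_A$.

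\textbf{Then I would} invoke Lemma~\ref{Lemma:key} to obtain the bijections $\eta\colon H^1_{\Delta,w}(\overline{\alpha}^{\tau})\to\Gamma$ and $\eta^{(2)}\colon H^1_{\Delta,w}(\overline{\alpha}^{\tau}\otimes\overline{\alpha}^{\tau})\to\Gamma$ together with the commutation identity $\eta^{(2)}\circ m_{\Delta}^{\overline{\alpha}^{\tau}}=m^{\Gamma}\circ(\eta\times\eta)$. For the tensor-square cohomology set, I would additionally use part~(3) of Proposition~\ref{prop:AlternativeDescrAlpha}, which gives a $\Lambda$-equivariant trace-preserving isomorphism $(A,\tau,\alpha)\cong(A\otimes A,\tau\otimes\tau,\alpha\otimes\alpha)$; this guarantees that the set $H^1_{\Delta,w}(\overline{\alpha}^{\tau}\otimes\overline{\alpha}^{\tau})$ is the one handled by the $\eta^{(2)}$ construction in the lemma and that the pairing function $m_{\Delta}^{\overline{\alpha}^{\tau}}$ is well-defined into it.

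\textbf{The main obstacle} I anticipate is not conceptual but bookkeeping: verifying that the chain of identifications $(A,\tau,\alpha)\cong(B,\widetilde{\tau},\widetilde{\alpha})$, followed by weak closure and the factor identification $\overline{D\otimes A}^{\tau_0}\cong R$, carries the present $\overline{\alpha}^{\tau}$ \emph{exactly} onto the action $\alpha$ of Lemma~\ref{Lemma:key}, including matching the compact group action $\rho$ and its fixed point algebra $B$. One must check that these identifications are natural enough to respect the localized weak cohomology sets and their pairing functions, so that the bijections $\eta,\eta^{(2)}$ transport without distortion. Since a (weakly) cocycle conjugacy induces a canonical bijection of localized weak cohomology sets (as noted in the Remark following Definition~\ref{Definition:conjugacy}), and since all the isomorphisms in sight are genuine equivariant conjugacies rather than mere cocycle conjugacies, these bijections are canonical and compatible with the pairing functions by construction; the commuting square is then inherited verbatim from Lemma~\ref{Lemma:key}. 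Thus the proof amounts to transporting the conclusion of the lemma through an explicit equivariant isomorphism, and I would present it as exactly that.
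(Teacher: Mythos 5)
Your proposal is correct and follows essentially the same route as the paper: the paper's proof likewise reduces, via part~(1) of Proposition~\ref{prop:AlternativeDescrAlpha}, to the action $\widetilde{\alpha}$ on $B\subseteq (D\otimes A)^{\otimes\Lambda}$, identifies its weak extension with the restriction of the von Neumann--algebraic Bernoulli action to $\overline{B}$, and then quotes Lemma~\ref{Lemma:key}. The only cosmetic difference is that you additionally invoke part~(3) of Proposition~\ref{prop:AlternativeDescrAlpha} for the tensor square, which is not needed since Lemma~\ref{Lemma:key} already produces $\eta^{(2)}$ on $H^1_{\Delta,w}(\alpha\otimes\alpha)$ directly.
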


\begin{proof}
Let $\tau _{D}$ denote the unique trace on $D$. For the trace $\tau _{0}$ in
the statement, set $\widetilde{\tau }=(\tau _{D}\otimes \tau _{0})^{\otimes
\Lambda }$, which is naturally a trace on $(D\otimes A)^{\otimes \Lambda }$.
Recall the definition of $\widetilde{\alpha }$ from before Proposition~\ref%
{prop:AlternativeDescrAlpha}. By part (1) of Proposition~\ref%
{prop:AlternativeDescrAlpha} the action $\alpha $ is conjugate to $%
\widetilde{\alpha }$ via an isomorphism that maps $\tau $ to $\widetilde{%
\tau }$. In particular, the weak extension of $\alpha $ with respect to $%
\tau $ is conjugate to the weak extension of $\widetilde{\alpha }$ with
respect to $\widetilde{\tau }$. Therefore it is enough to prove the
corresponding statement for $\widetilde{\alpha }$ and $\widetilde{\tau }$.

Let $\beta $ be the Bernoulli $\left( \Lambda \curvearrowright \Lambda
\right) $-action with base $D\otimes A$, and $\zeta $ its restriction to $%
\Delta $ which is the Bernoulli $\left( \Delta \curvearrowright \Lambda
\right) $-action with base $D\otimes A$. Observe that $\widetilde{\alpha }$
is the restriction of $\beta $ to $B\subseteq (D\otimes A)^{\otimes \Lambda
} $, and the unique extension $\overline{\widetilde{\alpha }}$ of $%
\widetilde{\alpha }$ to the weak closure $\overline{B}$ with respect to the
trace $\widetilde{\tau }$. Therefore the conclusion follows from Lemma \ref%
{Lemma:key}.
\end{proof}

Using the previous result, we will show below that if one starts with two
non-isomorphic abelian pro-$p$ groups $G_{0}$ and $G_{1}$, then the actions $%
\alpha^{G_{0}}$ and $\alpha ^{G_{1}}$ of $\Lambda $ on $A$, as in Definition %
\ref{df:alpha}, are not weakly cocycle conjugate. Since the pro-$p$ group is
no longer fixed, we again use superscripts (for the actions) and subscripts
(for the algebras) to keep track of which pro-$p$ group they come from.

\begin{theorem}
\label{Theorem:conjugacy} Let the notation be as in Definition~\ref{df:alpha}
and Proposition~\ref{prop:InvariantTracesAllG}. Fix $\tau_0\in T(A)$ and set 
$\tau=\iota(\tau)$. Let $G_{0}$ and $G_{1}$ be second countable abelian pro-$%
p$ groups. The following assertions are equivalent:

\begin{enumerate}
\item The groups $G_{0}$ and $G_{1}$ are topologically isomorphic;

\item The $\Lambda$-actions $\alpha ^{G_{0}}$ and $\alpha ^{G_{1}}$ are
conjugate;

\item The $\Lambda$-actions $\alpha ^{G_{0}}$ and $\alpha ^{G_{1}}$ are
cocycle conjugate;




\item The $\Lambda $-actions $\alpha ^{G_{0}}$ and $\alpha ^{G_{1}}$ are
weakly cocycle $\tau $-conjugate.
\end{enumerate}
\end{theorem}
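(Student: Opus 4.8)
The plan is to establish the cycle of implications $(1)\Rightarrow(2)\Rightarrow(3)\Rightarrow(4)\Rightarrow(1)$, with essentially all of the content residing in the first and last arrows. For $(1)\Rightarrow(2)$ I would exploit the naturality of the construction in Definition~\ref{df:alpha}: by construction $\alpha^{G_i}$ is conjugate to $\beta_{\Lambda\curvearrowright\Lambda,D}|_{E_{G_i}}\otimes\sigma$, and this conjugacy class does not depend on the chosen isomorphism $\xi_{G_i}$ (any two such isomorphisms $A\to E_{G_i}\otimes A$ conjugate the action to the same target). A topological isomorphism $G_0\cong G_1$ forces $D_{G_0}\cong D_{G_1}$ (equal supernatural number $p^{\infty}$) and, by the uniqueness clause of Theorem~\ref{thm:ModelActionProfinite}, lifts to an equivariant isomorphism $(D_{G_0},\delta^{G_0})\cong(D_{G_1},\delta^{G_1})$ intertwining along the group isomorphism. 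Tensoring over $\Lambda$ and passing to the $G$-fixed point algebras identifies $(E_{G_0},\beta_{\Lambda\curvearrowright\Lambda,D}|_{E_{G_0}})$ with $(E_{G_1},\beta_{\Lambda\curvearrowright\Lambda,D}|_{E_{G_1}})$ equivariantly, whence $\alpha^{G_0}\cong\alpha^{G_1}$.

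The implications $(2)\Rightarrow(3)\Rightarrow(4)$ are the routine part. Conjugacy is cocycle conjugacy with the trivial cocycle, which gives $(2)\Rightarrow(3)$. For $(3)\Rightarrow(4)$ I would note that an ordinary $1$-cocycle is in particular a weak $1$-cocycle, and that a cocycle conjugacy of the $\tau$-preserving C*-actions descends to the weak closures. Since $\tau=\iota(\tau_0)$ is extreme and amenable, Proposition~\ref{prop:AlternativeDescrAlpha} identifies each weak extension $\overline{\alpha^{G_i}}^{\tau}$ with the canonical Bernoulli-type action on $R$; the implementing isomorphism carries $\tau$ to another $\alpha^{G_1}$-invariant extreme amenable trace whose weak extension is again conjugate to $\overline{\alpha^{G_1}}^{\tau}$, so one obtains a weak cocycle conjugacy of $\overline{\alpha^{G_0}}^{\tau}$ and $\overline{\alpha^{G_1}}^{\tau}$.

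The heart of the proof is $(4)\Rightarrow(1)$. Here I would invoke the observation recorded in the remark following Definition~\ref{Definition:conjugacy}: a weak cocycle conjugacy between $\overline{\alpha^{G_0}}^{\tau}$ and $\overline{\alpha^{G_1}}^{\tau}$ induces a canonical bijection $\theta$ between the $\Delta$-localized weak cohomology sets, and---being defined by transport of weak $1$-cocycles along a fixed isomorphism and cocycle---tensoring it with itself yields a matching bijection $\theta^{(2)}$ on the $\otimes$-squared cohomology sets, compatible with the pairings $m_\Delta$. Composing with the identifications $\eta_i\colon H^1_{\Delta,w}(\overline{\alpha^{G_i}}^{\tau})\to\Gamma_i$ and $\eta^{(2)}_i$ from Theorem~\ref{thm:ComputCohom}, where $\Gamma_i=\widehat{G_i}$, produces bijections $f=\eta_1\circ\theta\circ\eta_0^{-1}$ and $f^{(2)}=\eta_1^{(2)}\circ\theta^{(2)}\circ(\eta_0^{(2)})^{-1}$ from $\Gamma_0$ to $\Gamma_1$ satisfying $f^{(2)}(ab)=f(a)f(b)$ for all $a,b\in\Gamma_0$. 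Setting $b$ equal to the identity shows $f^{(2)}(a)=f(a)c$ with $c=f(1)$, and substituting back gives $f(ab)=f(a)f(b)c^{-1}$; hence $a\mapsto f(a)c^{-1}$ is a bijective homomorphism $\Gamma_0\to\Gamma_1$. Thus $\widehat{G_0}\cong\widehat{G_1}$ as discrete countable abelian groups, and Pontryagin duality (an abstract isomorphism of discrete groups being automatically topological) returns the topological isomorphism $G_0\cong G_1$.

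The main obstacle is precisely this final extraction: the pairing $m_\Delta$ remembers the multiplication of $\widehat{G_i}$ only up to the slack built into Theorem~\ref{thm:ComputCohom}, so the two a priori distinct bijections $f$ (from $H^1_{\Delta,w}$) and $f^{(2)}$ (from its $\otimes$-square) must be shown rigid enough to force an honest group isomorphism---this is exactly what the translation correction by $c=f(1)$ accomplishes, using that $\Gamma_i$ is abelian. A secondary point demanding care is the trace bookkeeping in $(3)\Rightarrow(4)$: one must verify that replacing $\tau$ by its image under the conjugating isomorphism leaves the weak cocycle conjugacy class unchanged, which rests on the $\tau$-independence of the weak extension established in Proposition~\ref{prop:AlternativeDescrAlpha}.
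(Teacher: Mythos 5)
Your proposal is correct and follows essentially the same route as the paper's proof: the same cycle $(1)\Rightarrow(2)\Rightarrow(3)\Rightarrow(4)\Rightarrow(1)$, with the heart $(4)\Rightarrow(1)$ handled exactly as in the paper --- transport the $\Delta$-localized weak cohomology sets and their pairings along the weak cocycle conjugacy via $[u]\mapsto[\psi(u)w]$ and $[u]\mapsto[(\psi\otimes\psi)(u)(w\otimes w)]$, compose with the bijections of Theorem~\ref{thm:ComputCohom}, and turn the resulting bijection $\Gamma_0\to\Gamma_1$ into a group isomorphism by dividing out the constant $c=f(1)$, which is precisely the paper's correction $\widetilde{\pi}=\pi(\cdot)\chi^{-1}$. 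The only differences are minor: the paper additionally re-invokes Popa's superrigidity to normalize the intertwining cocycle $w$ and identify the correction constant with its associated character $\chi$, a step your abstract choice $c=f(1)$ validly renders unnecessary; and your treatment of $(3)\Rightarrow(4)$ (where your appeal to Proposition~\ref{prop:AlternativeDescrAlpha} for ``$\tau$-independence'' slightly overstates what that proposition establishes, since it only concerns traces in the image of $\iota$) is no less complete than the paper's own one-line justification of that implication.
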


\begin{proof}
Since $G_{0}$ and $G_{1}$ are pro-$p$ groups, there are isomorphisms $%
D_{G_{0}}\cong D_{G_{1}}\cong M_{p^{\infty }}$, and we denote this algebra
by $D$. Any isomorphism $G_{0}\cong G_{1}$ is immediately seen to induce an
equivariant *-isomorphism $(D,\delta ^{G_{0}})\cong (D,\delta ^{G_{1}})$.
From this, it is easy to construct an equivariant *-isomorphism $%
(B_{G_{0}},\alpha ^{G_{0}})\cong (B_{G_{1}},\alpha ^{G_{1}})$. This proves
the implication (1)$\Rightarrow $(2). It is clear that (2) implies (3), and
that (3) implies (4), because $\tau $ is $\alpha ^{G_{0}}$- and $\alpha
^{G_{1}}$-invariant by Proposition~\ref{prop:InvariantTracesAllG}. 
Therefore, it only remains to prove that (4) implies (1).

Suppose that $\alpha ^{G_{0}}$ and $\alpha ^{G_{1}}$ are weakly cocycle $%
\tau $-conjugate. Let $\tau _{D}$ denote the unique trace on $D$, and set $%
\widetilde{\tau }=(\tau _{D}\otimes \tau _{0})^{\otimes \Lambda }$, which is
a trace on $(D\otimes A)^{\otimes \Lambda }$. By part~(1) of Proposition~\ref%
{prop:AlternativeDescrAlpha}, for $i=0,1$, the weak extension of $\alpha
^{G_{i}}$ with respect to $\tau $ can be identified with the weak extension
of $\widetilde{\alpha }^{G_{i}}$ with respect to $\widetilde{\tau }$. (The
action $\widetilde{\alpha }^{G_{i}}$ is described before Proposition~\ref%
{prop:AlternativeDescrAlpha}.) Hence, it suffices to show that $\widetilde{%
\alpha }^{G_{0}}$ and $\widetilde{\alpha }^{G_{1}}$ are not weakly cocycle $%
\widetilde{\tau }$-conjugate. Since the trace $\widetilde{\tau }$ is fixed,
we will omit it from the notation for weak closures and weak extensions.
With a slight abuse of notation, we will write $\alpha ^{G_{i}}$ to mean $%
\widetilde{\alpha }^{G_{i}}$. This way, the symbol $\overline{\alpha }%
^{G_{i}}$ will always represent the weak extension of $\widetilde{\alpha }%
^{G_{i}}$ with respect to the trace $\widetilde{\tau }$. Let $\beta $ be the
Bernoulli $\left( \Lambda \curvearrowright \Lambda \right) $-action with
base $D\otimes A$. We also denote the algebra $\overline{(D\otimes A)}%
^{\otimes \Lambda }$ by $N$ (omitting the trace $\tau _{D}\otimes \tau _{0}$%
), and abbreviate the $G_{i}$-action $(\overline{\delta }^{G_{i}}\otimes 
\mathrm{id}_{R})^{\overline{\otimes }\Lambda }$ on $N$ to $\rho ^{(i)}\colon
G_{i}\rightarrow \mathrm{Aut}(N)$. (In particular, $\overline{B}%
_{G_{i}}=N^{\rho _{i}}$.) We let $\Gamma _{i}$ be the dual group of $G_{i}$.

Let $\psi \colon \overline{B}_{G_{0}}\rightarrow \overline{B}_{G_{1}}$ be an
isomorphism and let $w\colon \Lambda \rightarrow U(\overline{B}_{G_{1}})$ be
a weak $1$-cocycle for $\alpha ^{G_{1}}$ satisfying 
\begin{equation}
\mathrm{Ad}(w_{\gamma })\circ \overline{\alpha }_{\gamma }^{G_{1}}=\psi
\circ \overline{\alpha }_{\gamma }^{G_{0}}\circ \psi ^{-1}\text{\label{eqn}}
\end{equation}%
for every $\gamma \in \Lambda $. Using Popa's superrigidity theorem \cite[%
Theorem 4.1]{popa_some_2006} in the case of weak $1$-cocycles as in the
proof of Theorem~\ref{thm:ComputCohom}, one can find unitaries $z\in U(%
\overline{B}_{G_{1}})$ and $v\in U(N)$, and a character $\chi \in \Gamma
_{1} $ such that 
\begin{equation*}
w_{\gamma }=z^{\ast }v^{\ast }\overline{\beta }_{\gamma }(v)\overline{\alpha 
}_{\gamma }(z)\ \mathrm{\mathrm{mod}}\mathbb{C}\ \ \mbox{
and }\ \ \rho _{g}^{(1)}(v)=\chi (g)v
\end{equation*}%
for every $\gamma \in \Delta $ and for every $g\in G_{1}$. Therefore, upon
replacing $\psi $ with $\psi \circ \mathrm{\mathrm{Ad}}(z^{\ast })$, we can
assume that $z=1$ and $w_{\gamma }=v^{\ast }\overline{\beta }_{\gamma }(v)$
for every $\gamma \in \Delta $.

Next, we want to define a bijection $\varphi \colon H_{\Delta ,w}^{1}(%
\overline{\alpha }^{G_{0}})\rightarrow H_{\Delta ,w}^{1}(\overline{\alpha }%
^{G_{1}})$. Given a function $u\colon \Lambda \rightarrow U(\overline{B}%
_{G_{1}})$, define $\psi (u)w\colon \Lambda \rightarrow U(\overline{B}%
_{G_{1}})$ to be the function given by $(\psi (u)w)_{\gamma }=\psi
(u_{\gamma })w_{\gamma }$ for all $\gamma \in \Lambda $.

\begin{claim*}
If $u\in Z_{w}^{1}(\overline{\alpha }^{G_{0}})$, then $\psi (u)w\in
Z_{w}^{1}(\overline{\alpha }^{G_{1}})$.
\end{claim*}

\begin{proof}[Proof of claim]
Let $\gamma ,\sigma \in \Lambda $. In the following computation (where all
equalities are up to scalars), we use the fact that $w$ is a weak $1$%
-cocycle for $\overline{\alpha }^{G_{1}}$ at the first step, and equation~(%
\ref{eqn}) at the second step, to get 
\begin{equation*}
\psi (u_{\gamma \sigma })=\psi (u_{\gamma })(\psi \circ \overline{\alpha }%
_{\gamma }^{G_{0}}\circ \psi ^{-1})(\psi (u_{\sigma }))=\psi (u_{\gamma })(%
\mathrm{\mathrm{Ad}}(w_{\gamma })\circ \overline{\alpha }_{\gamma
}^{G_{1}})(\psi (u_{\sigma }))=\psi (u_{\gamma })w_{\gamma }\overline{\alpha 
}_{\gamma }^{G_{1}}(\psi (u_{\sigma }))w_{\gamma }^{\ast }\ \ \mathrm{mod}%
\mathbb{C}
\end{equation*}%
Therefore, using the above identity and the fact that $u$ is a weak $1$%
-cocycle for $\overline{\alpha }^{G_{0}}$, we deduce that 
\begin{equation*}
\psi (u_{\gamma \sigma })w_{\gamma \sigma }=\psi (u_{\gamma })w_{\gamma }%
\overline{\alpha }_{\gamma }^{G_{1}}(\psi (u_{\sigma }))w_{\gamma }^{\ast
}w_{\gamma }\overline{\alpha }_{\gamma }^{G_{1}}(w_{\sigma })=\psi
(u_{\gamma })w_{\gamma }\overline{\alpha }_{\gamma }^{G_{1}}(\psi (u_{\sigma
})w_{\sigma })\ \ \mathrm{mod}\mathbb{C}\text{.}
\end{equation*}%
This shows that $\psi (u)w$ is a weak $1$-cocycle for $\overline{\alpha }%
^{G_{1}}$, proving the claim.
\end{proof}

It follows that there is a well-defined map $\widehat{\psi }\colon Z_{w}^{1}(%
\overline{\alpha }^{G_{0}})\rightarrow Z_{w}^{1}(\overline{\alpha }^{G_{1}})$
given by $\widehat{\psi }(u)=\psi (u)w$ for $u\in Z_{w}^{1}(\overline{\alpha 
}^{G_{0}})$.

\begin{claim*}
If $u,u^{\prime }\in Z_{w}^{1}(\overline{\alpha }^{G_{0}})$ are $\Delta $%
-locally weakly cohomologous, then so are $\widehat{\psi }(u)$ and $\widehat{%
\psi }(u^{\prime })$.
\end{claim*}

\begin{proof}[Proof of claim]
Find a unitary $z\in U(\overline{B}_{G_{0}})$ satisfying $u_{\gamma
}^{\prime }=z^{\ast }u_{\gamma }\overline{\alpha }_{\gamma }^{G_{0}}(z)\ 
\mathrm{mod}\mathbb{C}$ for $\gamma \in \Delta $. Then 
\begin{equation*}
\psi (u_{\gamma }^{\prime })w_{\gamma }=\psi (z)^{\ast }\psi (u_{\gamma
})(\psi \circ \overline{\alpha }_{\gamma }^{G_{0}}\circ \psi
^{-1})(z)w_{\gamma }=\psi (z)^{\ast }\psi (u_{\gamma })(\mathrm{Ad}%
(w_{\gamma })\circ \overline{\alpha }_{\gamma }^{G_{1}})(z)w_{\gamma }=\psi
(z)^{\ast }\psi (u_{\gamma })w_{\gamma }\overline{\alpha }_{\gamma
}^{G_{1}}(z)\ \mathrm{mod}\mathbb{C}
\end{equation*}%
for all $\gamma \in \Delta $. This shows that $\psi (u^{\prime })w$ and $%
\psi (u)w$ are $\Delta $-locally weakly cohomologous.
\end{proof}

It follows that $\widehat{\psi }$ induces a well-defined map $\varphi \colon
H_{\Delta ,w}^{1}(\overline{\alpha }^{G_{0}})\rightarrow H_{\Delta ,w}^{1}(%
\overline{\alpha }^{G_{1}})$.

\begin{claim*}
The map $\varphi $ is invertible.
\end{claim*}

\begin{proof}[Proof of claim]
It follows from equation~(\ref{eqn}) that 
\begin{equation*}
\overline{\alpha }_{\gamma }^{G_{0}}=\psi ^{-1}\circ \mathrm{Ad}(w_{\gamma
})\circ \overline{\alpha }_{\gamma }^{G_{1}}\circ \psi =\mathrm{Ad}(\psi
^{-1}(w_{\gamma }))\circ \psi ^{-1}\circ \overline{\alpha }_{\gamma
}^{G_{0}}\circ \psi
\end{equation*}%
for all $\gamma \in \Lambda $. Therefore, the same argument as before shows
that the function that assigns to the cocycle $u$ for $\overline{\alpha }%
^{G_{1}}$ the cocycle $\gamma \mapsto \psi ^{-1}\left( u_{\gamma }w_{\gamma
}\right) $ for $\overline{\alpha }^{G_{0}}$ induces a well-defined function $%
H_{\Delta ,w}^{1}(\overline{\alpha }^{G_{1}})\rightarrow H_{\Delta ,w}^{1}(%
\overline{\alpha }^{G_{0}})$, which is easily seen to be the inverse of $%
\varphi $. This proves the claim.
\end{proof}

Similarly as above, we define a bijection $\varphi ^{(2)}\colon H_{\Delta
,w}^{1}(\overline{\alpha }^{G_{0}}\otimes \overline{\alpha }%
^{G_{0}})\rightarrow H_{\Delta ,w}^{1}(\overline{\alpha }^{G_{1}}\otimes 
\overline{\alpha }^{G_{1}})$, by 
\begin{equation*}
\varphi ^{(2)}([u])=[(\psi \otimes \psi )(u)(w\otimes w)]
\end{equation*}%
for all $u\in Z_{w}^{1}(\overline{\alpha }\otimes \overline{\alpha })$,
where $(\psi \otimes \psi )(u)(w\otimes w)\colon \Lambda \rightarrow U(%
\overline{B}_{G_{1}})$ is the weak $1$-cocycle for $\overline{\alpha }%
^{G_{1}}\otimes \overline{\alpha }^{G_{1}}$ given by $\gamma \mapsto (\psi
\otimes \psi )(u_{\gamma })(w_{\gamma }\otimes w_{\gamma })$ for all $\gamma
\in \Lambda $. Moreover, a routine calculation shows that $\varphi
^{(2)}\circ m_{\Delta }^{\overline{\alpha }^{G_{0}}}=m_{\Delta }^{\overline{%
\alpha }^{G_{1}}}\circ \varphi $. For $i\in \left\{ 0,1\right\} $, let $\eta
_{G_{i}}\colon H_{\Delta ,w}^{1}(\overline{\alpha }^{G_{i}})\rightarrow
\Gamma _{i}$ and $\eta _{G_{i}}^{(2)}\colon H_{\Delta ,w}^{1}(\overline{%
\alpha }^{G_{i}}\otimes \overline{\alpha }^{G_{i}})\rightarrow \Gamma _{i}$
be the maps from Theorem~\ref{thm:ComputCohom}, and set 
\begin{equation*}
\pi =\eta _{G_{1}}\circ \varphi \circ \eta _{G_{0}}^{-1}\colon \Gamma
_{0}\rightarrow \Gamma _{1}\ \ \mbox{ and }\ \ \pi ^{(2)}=\eta
_{G_{1}}^{(2)}\circ \varphi ^{(2)}\circ (\eta _{G_{0}}^{(2)})^{-1}\colon
\Gamma _{0}\rightarrow \Gamma _{1}.
\end{equation*}%
By Theorem~\ref{thm:ComputCohom}, the following diagram is commutative: 
\begin{equation*}
\xymatrix{ \Gamma_0\times \Gamma_0\ar[d]_-{\pi\times\pi} &&H^1_{\Delta,
w}(\overline{\alpha}^{G_0})\times H^1_{\Delta, w}(\overline{\alpha}^{G_0})
\ar[ll]_-{\eta_{G_0}\times \eta_{G_0}}\ar[rr]^-{m^{\overline{\alpha
}^{G_{0}}}}\ar[d]_-{\varphi\times\varphi}&& H^1_{\Delta,
w}(\overline{\alpha}^{G_0}\otimes
\overline{\alpha}^{G_0})\ar[d]^-{\varphi^{(2)}}\ar[rr]^-{\eta^{(2)}_{G_0}}
&& \Gamma_0\ar[d]^-{\pi ^{(2)}}\\ \Gamma_1 \times \Gamma_1 &&H^1_{\Delta,
w}(\overline{\alpha}^{G_1})\times H^1_{\Delta, w}(\overline{\alpha}^{G_1})
\ar[ll]^-{\eta_{G_1}\times \eta_{G_1}}\ar[rr]_--{m^{\overline{\alpha
}^{G_{1}}}}&& H^1_{\Delta, w}(\overline{\alpha}^{G_1}\otimes
\overline{\alpha}^{G_1}) \ar[rr]^-{\eta^{(2)}_{G_1}} && \Gamma_1. }
\end{equation*}%
Recall that $\chi $ denotes the character of $G_{1}$ associated with the
weak $1$-cocycle $w$ for $\overline{\alpha }^{G_{1}}$. Then $\pi
^{(2)}(\omega \omega ^{\prime })=\pi (\omega )\pi (\omega ^{\prime })$ and $%
\pi (1_{\Gamma _{0}})=\chi $. It follows that $\pi ^{(2)}(\omega )=\pi
(\omega )\chi $ for every $\omega \in \Gamma _{0}$. Therefore the map $%
\widetilde{\pi }\colon \Gamma _{0}\rightarrow \Gamma _{1}$ given by $%
\widetilde{\pi }(\omega )=\pi (\omega )\chi ^{-1}$ for all $\omega \in
\Gamma _{0}$, is a group isomorphism. Indeed, we have%
\begin{equation*}
\pi (\omega )\chi ^{-1}\pi (\omega ^{\prime })\chi ^{-1}=\pi ^{(2)}(\omega
\omega ^{\prime })\chi ^{-2}=\pi (\omega \omega ^{\prime })\chi ^{-1}
\end{equation*}%
for $\omega ,\omega ^{\prime }\in \Gamma _{0}$. Since clearly $\widetilde{%
\pi }$ is a bijection, we conclude that $\widetilde{\pi }$ is a group
isomorphism, and hence $\Gamma _{0}\cong \Gamma _{1}$. By Pontryagin
duality, we conclude that $G_{0}\cong G_{1}$, and the proof is finished.
\end{proof}

We now arrive at the main result of this section. Its conclusion will be
significantly strengthened in Corollary~\ref{Corollary:conjugacy}.

\begin{theorem}
\label{Theorem:uncountably} Let $\Lambda $ be a countable discrete group
with an infinite relative property \emph{(T) }subgroup, let $p$ be a prime
number, and let $A$ be separable, locally reflexive, $M_{p^{\infty }}$%
-absorbing, unital C*-algebra admitting an amenable trace, and such that $%
A\cong A^{\otimes \mathbb{N}}$. Then there exists a continuum $(\alpha
^{(t)})_{t\in \mathbb{R}}$ of pairwise not weakly cocycle conjugate,
strongly outer actions of $\Lambda $ on $A$. In fact, there exists an
amenable, extreme trace $\tau $ that is invariant under $\alpha ^{(t)}$ for
every $t\in \mathbb{R}$, and such that the actions $\alpha ^{(t)}$ are all $%
\tau $-mixing and pairwise not weakly cocycle $\tau $-conjugate.
\end{theorem}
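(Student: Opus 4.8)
The plan is to run the construction of Definition~\ref{df:alpha} on a continuum of pairwise non-isomorphic groups and then to read off every assertion from the structural results already established, the only genuinely new work being the production of the groups and the passage from weak cocycle $\tau$-conjugacy to C*-weak cocycle conjugacy.

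\emph{Producing the groups.} By Pontryagin duality, second countable abelian pro-$p$ groups correspond, up to topological isomorphism, to countable discrete abelian $p$-groups, and there are continuum many of the latter up to isomorphism. Concretely, for a subset $S\subseteq\mathbb{N}$ I would set $G_S=\prod_{n\in S}\mathbb{Z}/p^n$, a second countable abelian pro-$p$ group whose dual is $\widehat{G_S}=\bigoplus_{n\in S}\mathbb{Z}/p^n$. Since the isomorphism type of a direct sum of cyclic $p$-groups is determined by the multiset of orders of its summands, $\widehat{G_S}\cong\widehat{G_{S'}}$ if and only if $S=S'$, so the $G_S$ are pairwise non-topologically-isomorphic. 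Reindexing continuum many distinct subsets as $(G_t)_{t\in\mathbb{R}}$ and putting $\alpha^{(t)}=\alpha^{G_t}$ yields the desired continuum of actions.

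\emph{The common trace and the easy properties.} Because $A$ admits an amenable trace and $\mathrm{T}_{\mathrm{am}}(A)$ is a nonempty face of the simplex $\mathrm{T}(A)$, it has an extreme point $\tau_0$, which is then an extreme amenable trace on $A$. I would put $\tau=\iota(\tau_0)$ for the map $\iota$ of Proposition~\ref{prop:InvariantTracesAllG}; that proposition shows $\tau$ is again amenable and extreme and is invariant under $\alpha^{G}$ for \emph{every} pro-$p$ group $G$, hence under every $\alpha^{(t)}$. Parts~(4) and~(5) of Proposition~\ref{prop:AlternativeDescrAlpha} give that each $\alpha^{(t)}$ is strongly outer and that $\overline{\alpha^{(t)}}^{\tau}$ is mixing, i.e.\ $\alpha^{(t)}$ is $\tau$-mixing. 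Finally, for $s\neq t$ we have $G_s\not\cong G_t$, so the equivalence $(1)\Leftrightarrow(4)$ of Theorem~\ref{Theorem:conjugacy} shows that $\alpha^{(s)}$ and $\alpha^{(t)}$ are not weakly cocycle $\tau$-conjugate; this already establishes the last sentence of the statement.

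\emph{Upgrading to C*-weak cocycle conjugacy.} It remains to promote this to the assertion that $\alpha^{(s)}$ and $\alpha^{(t)}$ are not weakly cocycle conjugate as C*-dynamical systems, and I expect this to be the main obstacle. Arguing contrapositively, a weak cocycle conjugacy implemented by an isomorphism $\psi$ and a weak $1$-cocycle $u$ transports $\tau$ to $\tau'=\tau\circ\psi^{-1}$; since traces are unchanged by inner perturbations and by isomorphisms, $\tau'$ is again an amenable, extreme, $\alpha^{(t)}$-invariant trace, and $\psi$ induces a weak cocycle conjugacy of the weak extensions $\overline{\alpha^{(s)}}^{\tau}$ and $\overline{\alpha^{(t)}}^{\tau'}$. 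The difficulty is that $\tau'$ is a priori not the product trace used in Theorem~\ref{thm:ComputCohom}, so the cohomological computation of Lemma~\ref{Lemma:key}, which depends on the ambient weak closure being the von Neumann-algebraic Bernoulli model, does not immediately apply to $\overline{\alpha^{(t)}}^{\tau'}$. The key lemma I would need to prove is that this weak extension may nevertheless be identified, up to conjugacy, with the model extension $\overline{\alpha^{(t)}}^{\tau}$: one route is to show that the conjugacy can be taken to preserve $\tau$, exploiting that $A\cong A^{\otimes\mathbb{N}}$ is $M_{p^\infty}$-absorbing and that, by part~(1) of Proposition~\ref{prop:AlternativeDescrAlpha}, the weak extension at an extreme amenable invariant trace realises the Bernoulli model of Lemma~\ref{Lemma:key}, for which $\overline{D\otimes A}^{\tau_0}\cong R$ by Lemma~\ref{lemma:WeakClosureR}. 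Granting such an identification, $\overline{\alpha^{(s)}}^{\tau}$ and $\overline{\alpha^{(t)}}^{\tau}$ become weakly cocycle conjugate, i.e.\ $\alpha^{(s)}$ and $\alpha^{(t)}$ are weakly cocycle $\tau$-conjugate, contradicting the previous paragraph and forcing $G_s\cong G_t$.
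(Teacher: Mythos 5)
Your first two paragraphs reproduce the paper's own proof essentially verbatim: the paper likewise takes a continuum of pairwise non-isomorphic abelian pro-$p$ groups (it merely asserts their existence, so your explicit family $G_S=\prod_{n\in S}\mathbb{Z}/p^n$, separated by their discrete duals, is a fine concrete substitute), sets $\alpha^{(t)}=\alpha^{G_t}$ as in Definition~\ref{df:alpha}, produces the extreme amenable trace $\tau_0$, puts $\tau=\iota(\tau_0)$ using Proposition~\ref{prop:InvariantTracesAllG}, and cites parts~(4) and~(5) of Proposition~\ref{prop:AlternativeDescrAlpha} together with the implication (4)$\Rightarrow$(1) of Theorem~\ref{Theorem:conjugacy}. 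This correctly establishes the second sentence of the statement: strong outerness, the common invariant amenable extreme trace, $\tau$-mixing, and pairwise failure of weak cocycle $\tau$-conjugacy.

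The divergence is your third paragraph, and there the situation is delicate. The obstacle you isolate is genuine: an isomorphism $\psi$ implementing a C*-level weak cocycle conjugacy only carries $\tau$ to some invariant trace $\tau'=\tau\circ\psi^{-1}$, which is extreme and amenable but need not lie in the image of $\iota$, so Lemma~\ref{Lemma:key} and Theorem~\ref{thm:ComputCohom} cannot be applied to $\overline{\alpha^{(t)}}^{\tau'}$. Your proposal does not close this gap: the ``key lemma'' is left unproved, and the route you sketch misquotes part~(1) of Proposition~\ref{prop:AlternativeDescrAlpha}, which identifies the weak extension with the Bernoulli model only for traces of the form $\iota(\tau_0)$, not for arbitrary extreme amenable invariant traces --- that restriction is precisely the point at issue, so the appeal is circular. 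You should know, however, that the paper does not close this gap either: its proof of Theorem~\ref{Theorem:uncountably} ends with the von Neumann-level conclusion, and the C*-level claim rests on the implication ``(weakly) cocycle conjugate $\Rightarrow$ weakly cocycle $\tau$-conjugate'', which in Theorem~\ref{Theorem:conjugacy} ((3)$\Rightarrow$(4)) is justified only by the remark that $\tau$ is invariant under both actions --- exactly the step you flag. When $A$ has a unique trace (the UHF case of Corollary~\ref{cor:UHFuncountable}) every isomorphism preserves $\tau$ and the implication is trivial; it is only for $A$ with many invariant traces that an additional argument, such as your key lemma, is actually needed. So your write-up proves precisely what the paper's proof proves, and your unresolved step is a faithful diagnosis of what the paper leaves implicit rather than a defect peculiar to your approach.
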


\begin{proof}
Let $(G_{t})_{t\in \mathbb{R}}$ be a continuum family of pairwise
nonisomorphic abelian pro-$p$ groups. For $t\in \mathbb{R}$, set $\alpha
^{(t)}:=\alpha ^{G_{t}}$, where $\alpha ^{G_{t}}:\Lambda \rightarrow \mathrm{%
Aut}\left( A\right) $ is the action of $\Lambda $ on $A$ given by Definition %
\ref{df:alpha}. By part~(4) of Proposition~\ref{prop:AlternativeDescrAlpha}, 
$\alpha ^{(t)}$ is strongly outer. Since $A$ has an amenable trace and $T_{%
\mathrm{am}}(A)$ is a face in the simplex $T(A)$, there exists an extreme,
amenable trace $\tau _{0}$ on $A$. Let $\iota \colon T(A)\rightarrow T(A)$
be the map from Proposition~\ref{prop:InvariantTracesAllG}. Then $\tau
=\iota (\tau _{0})$ is extreme and amenable, and it is $\alpha ^{(t)}$%
-invariant for every $t\in \mathbb{R}$ by Proposition~\ref%
{prop:InvariantTracesAllG}. By part~(5) of Proposition~\ref%
{prop:AlternativeDescrAlpha}, $\alpha ^{(t)}$ is $\tau $-mixing for every $%
t\in \mathbb{R}$. Finally, Theorem~\ref{Theorem:conjugacy} implies that the
weak extensions of the $\alpha ^{(t)}$ to $\overline{A}^{\tau }$ are
pairwise not weakly cocycle conjugate. This concludes the proof.
\end{proof}

We make some comments on the assumptions of the theorem above. First,
subgroups with relative property (T) are abundant: if either $\Lambda$ or $%
\Delta$ has property (T), then the inclusion $\Delta\subseteq \Lambda$ has
relative property (T). On the other hand, it is easy to find many
C*-algebras satisfying the assumptions of Theorem~\ref{Theorem:uncountably}.
Indeed, if $A_{0}$ is any separable, unital, exact C*-algebra with an
amenable trace, then $A=M_{p}^{\infty }\otimes A_{0}^{\otimes \mathbb{N}}$
satisfies the assumptions of said theorem. In particular, $A_{0}$ and $A$
need not be simple. We also remark that every trace on a nuclear C*-algebra
is necessarily amenable.


To end this section, we explicitly state our result for UHF-algebras, to
highlight the contrast with the results in \cite%
{kishimoto_rohlin_1995,matui_actions_2011,szabo_strongly_2017}.


\begin{corollary}
\label{cor:UHFuncountable} Let $D$ be a UHF-algebra of infinite type, and
let $\Lambda$ be a countable group with an infinite subgroup with relative
property (T). Then there exists a continuum of pairwise non (weakly)
cocycle-conjugate, strongly outer actions of $\Lambda$ on $D$.
\end{corollary}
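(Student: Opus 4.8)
The plan is to deduce this corollary directly from Theorem~\ref{Theorem:uncountably}, by verifying that a UHF-algebra $D$ of infinite type satisfies all the hypotheses imposed there on the C*-algebra $A$, for a suitably chosen prime $p$. First I would record the three hypotheses that hold for any UHF-algebra: being a direct limit of matrix algebras, $D$ is separable and nuclear; nuclearity yields exactness and hence local reflexivity by \cite[Corollary 9.4.1]{brown_c*-algebras_2008}. Moreover $D$ carries a (unique) tracial state, which is amenable because $D$ is nuclear \cite[Theorem 4.2.1]{brown_invariant_2006}. Thus separability, local reflexivity, and the existence of an amenable trace are all immediate.

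The remaining two hypotheses use the assumption that $D$ has infinite type. Write $\boldsymbol{n}$ for the supernatural number of $D$, so that infinite type means $\boldsymbol{n}(p)\in\{0,\infty\}$ for every prime $p$. Since $D$ is infinite-dimensional, there is at least one prime $p$ with $\boldsymbol{n}(p)=\infty$, and I would fix such a $p$. Because $\boldsymbol{n}(p)=\infty$, tensoring $D$ with $M_{p^{\infty}}$ leaves the supernatural number unchanged, so $D\cong D\otimes M_{p^{\infty}}$ and $D$ is $M_{p^{\infty}}$-absorbing. Similarly, since $\boldsymbol{n}$ takes values in $\{0,\infty\}$, forming the infinite tensor power $D^{\otimes\mathbb{N}}$ does not alter the supernatural number, whence $D\cong D^{\otimes\mathbb{N}}$ by Glimm's classification of UHF-algebras. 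With all five hypotheses in hand, Theorem~\ref{Theorem:uncountably} applied with $A=D$ furnishes a continuum $(\alpha^{(t)})_{t\in\mathbb{R}}$ of strongly outer actions of $\Lambda$ on $D$ that are pairwise not weakly cocycle conjugate.

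The only remaining point is the parenthetical ``(weakly)'' in the statement. Here I would observe that every $1$-cocycle is in particular a weak $1$-cocycle, so cocycle conjugacy is a special case of weak cocycle conjugacy; consequently actions that fail to be weakly cocycle conjugate a fortiori fail to be cocycle conjugate. Hence the family $(\alpha^{(t)})_{t\in\mathbb{R}}$ is simultaneously pairwise non-cocycle conjugate and pairwise non-weakly cocycle conjugate, as claimed. The entire argument is a routine specialization of the preceding theorem; the only mildly delicate step is the bookkeeping with supernatural numbers required to pin down the prime $p$ and to confirm both $M_{p^{\infty}}$-absorption and self-absorption, and I expect no genuine obstacle beyond that.
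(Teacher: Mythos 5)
Your proposal is correct and matches the paper's (implicit) argument: the corollary is stated as a direct specialization of Theorem~\ref{Theorem:uncountably}, and your verification of the hypotheses---separability, local reflexivity via nuclearity/exactness, amenability of the unique trace, and the supernatural-number bookkeeping giving $D\cong D\otimes M_{p^{\infty}}$ and $D\cong D^{\otimes\mathbb{N}}$---is exactly what is needed. Your closing observation that failure of weak cocycle conjugacy implies failure of cocycle conjugacy correctly handles the parenthetical ``(weakly)''.
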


\section{Conjugacy, cocycle conjugacy, and weak cocycle conjugacy are not
Borel \label{Section:conjugacy}}

In this section, we discuss how the construction from Section~4 can be used
to prove that, under the assumptions of Theorem \ref{Theorem:uncountably},
conjugacy, cocycle conjugacy, and weak cocycle conjugacy of strongly outer
actions of $\Lambda $ on $A$ are complete analytic sets.

\subsection{Borel complexity of equivalence relations\label%
{Sbs:borel-complexity}}

We recall here some notions from Borel complexity theory. In this setting, a 
\emph{classification problem }is identified with an equivalence relation $E$
on a Polish space $X$. Virtually any concrete classification problem in
mathematics is of this form, perhaps after a suitable parameterization. For
example, a countable discrete group can be identified with a set of triples
of natural numbers, coding a group operation on $\mathbb{N}$. The space of
such sets of triples is a $G_{\delta }$ subset of the compact metrizable
space $\left\{ 0,1\right\} ^{\mathbb{N}^{3}}$ endowed with the product
topology. (A $G_{\delta }$ subspace of a Polish space is Polish by \cite[%
Theorem 3.11]{kechris_classical_1995}.)

\begin{definition}
(See \cite[Definitions 5.1.1 and 5.1.2]{gao_invariant_2009}). A \emph{Borel
reduction} from an equivalence relation $E$ on a Polish space $X$ to an
equivalence relation $F$ on a Polish space $Y$ is a Borel function $f\colon
X\rightarrow Y$ such that $[x] _{E}\mapsto [ f(x)] _{F}$ is a well-defined
injective function from the space $X/E$ of $E$-classes to the space $Y/F$ of 
$F$-classes. The equivalence relation $E$ is said to be \emph{Borel reducible%
} to $F$, in formulas $E\leq _{B}F$, if there exists a Borel reduction from $%
E$ to $F$.
\end{definition}

\begin{remark}
When $E$ is Borel reducible to $F$, the objects of $X$ up to $E$ can be 
\emph{explicitly }classified using $F$-classes as complete invariants. In
other words, the classification problem represented by $F$ is \emph{at least
as complex }as the classification problem represented by $E$. (Observe that
this notion does not depend on the topologies of $X$ and $Y$, but only on
the standard Borel structures that they induce.)
\end{remark}

The notion of Borel reducibility can be used to measure the complexity of a
given classification problem. The first natural measure of complexity is
simply the \emph{number of classes }of the corresponding equivalence
relation. Theorem \ref{Theorem:uncountably} addresses this problem in the
case of conjugacy, cocycle conjugacy, and weak cocycle conjugacy of strongly
outer actions of $\Lambda $ on $A$: they have a continuum of equivalence
classes.

The natural next step in the study of the complexity of a classification
problem consists in determining whether the classes can be \emph{explicitly }%
parameterized as the points of a Polish space. This is equivalent to the
corresponding equivalence relation being \emph{smooth}, that is, Borel
reducible to the relation of \emph{equality }in some Polish space. As an
example, Glimm's classification of separable UHF-algebras implies that the
relation of *-isomorphism for these algebras is smooth (even though there
exists a continuum of isomorphism classes). Similarly, the orbit equivalence
relation of a continuous action of a compact group on a Polish space is
smooth. However, isomorphism of countable rank-one torsion-free abelian
groups, for instance, is not smooth. Another canonical example of a
nonsmooth equivalence relation is the relation of \emph{tail equivalence }%
for binary sequences.

A more generous notion of being well-behaved for an equivalence relation $E$
on $X$ is being \emph{Borel }as a subset of $X\times X$. For instance,
isomorphism of countable rank-one torsion-free abelian groups is Borel.
Similarly, tail equivalence of binary sequences is Borel and, more
generally, the orbit equivalence relation of a \emph{free }continuous action
of a Polish group on a Polish space is Borel. (The orbit equivalence
relation of a continuous action of a Polish group $G$ on a Polish pace $X$
is Borel if and only if the map that assigns to each point $x$ of $X$ the
corresponding \emph{stabilizer subgroup }$G_{x}$ of $G$ is Borel; see \cite[%
7.1.2]{becker_descriptive_1996}.) Since the relation of equality on any
Polish space is clearly Borel, any smooth equivalence relation is, in
particular, Borel.

One can also define a similar notion of comparison among \emph{sets}, rather
than equivalence relations.

\begin{definition}
(See \cite[Section 14.A and Definition 26.7]{kechris_classical_1995}.) A
subset $A$ of a Polish space $X$ is said to be \emph{analytic}, or $%
\boldsymbol{\Sigma }_{1}^{1}$, if there exist a Polish space $Z$ and a Borel
function $f\colon Z\rightarrow X$ such that $A$ is the image under $f$ of a
Borel subset of $Z$. A \emph{complete analytic set }(also called $%
\boldsymbol{\Sigma }_{1}^{1}$-complete set)\emph{\ }is an analytic subset $A$
of a Polish space $X$ such that, for any other analytic subset $B$ of a
Polish space $Y$, there exists a Borel function $f\colon Y\rightarrow X$
such that $f^{-1}(A)=B$.
\end{definition}

We recall here the fundamental fact that a complete analytic set is not
Borel. The canonical example of a complete analytic set is the set of
ill-founded trees on $\mathbb{N}$; see \cite[Section 27.A]%
{kechris_classical_1995}.

As above, we regard an equivalence relation $E$ on a Polish space $X$ as a
subset of the product space $X\times X$ endowed with the product topology.
Consistently, we say that $E$ is a complete analytic set if it is complete
analytic as a subset of $X\times X$. It is clear that if $E$ is Borel
reducible to an equivalence relation $F$, and $E$ is a complete analytic
set, then $F$ is a complete analytic set as well.

In Theorem~\ref{Theorem:conjugacy2}, we will prove that the construction of
actions from profinite groups described in Section \ref{Sct:uncountably} can
be used to show that, under the assumption of Theorem \ref%
{Theorem:uncountably}, the relations of conjugacy, cocycle conjugacy, and
weak cocycle conjugacy of strongly outer actions of $\Lambda $ on $A$ are
complete analytic sets. This is a significant strengthening of the
conclusions of Theorem \ref{Theorem:uncountably}.

\subsection{Parametrizing actions\label{Sbs:actions}}

For the rest of this section, we fix a countable discrete group $\Lambda $
and a separable unital C*-algebra $A$. We proceed to explain how the
classification problem for strongly outer actions of $\Lambda $ on $A$ can
be naturally regarded as equivalence relations on a Polish space. We regard $%
\mathrm{T}(A)$ as a compact metrizable space endowed with the w*-topology.
The set $\mathrm{Act}_{\Lambda }(A)$ of actions of $\Lambda $ on $A$ is a
closed subset of the product space $\mathrm{Aut}(A)^{\Lambda }$ endowed with
the product topology, giving it the structure of a Polish space.

\begin{notation}
\label{nota:SpacesActions} Let $\tau $ be a trace on $A$.

\begin{itemize}
\item We denote by $\mathrm{Act}_\Lambda(A, \tau)$ the set of $\tau $%
-preserving actions of $\Lambda $ on $A$;

\item We denote by $\mathrm{WM}_\Lambda(A, \tau)$ the set of $\tau $%
-preserving weakly $\tau $-mixing actions of $\Lambda $ on $A$;

\item We denote by $\mathrm{SO}_{\Lambda }(A)$ the set of strongly outer
actions of $\Lambda $ on $A$;

\item We denote by $\mathrm{SOWM}_{\Lambda }(A,\tau )$ the set of $\tau $%
-preserving weakly $\tau $-mixing strongly outer actions of $\Lambda $ on $A$%
.
\end{itemize}
\end{notation}

It is easy to see that $\mathrm{Act}_{\Lambda }(A, \tau )$ and $\mathrm{WM}%
_{\Lambda }(A, \tau )$ are $G_{\delta }$ subsets of $\mathrm{Act}_{\Lambda
}(A)$. We will show below that $\mathrm{SO}_{\Lambda }(A)$ and $\mathrm{SOWM}%
_{\Lambda }(A, \tau )$ are also $G_{\delta }$ subsets of $\mathrm{Act}%
_{\Lambda }\left( A\right) $.

Given a C*-algebra $A$, we let $A_{\mathrm{sa}}$ be the set of selfadjoint
elements of $A$. An element $a$ of $A_{\mathrm{sa}}$ is a contraction if $%
\left\Vert a\right\Vert \leq 1$. Given a trace $\tau $ on $A$, we let $%
\left\Vert a\right\Vert _{\tau }=\sqrt{\tau \left( a^{\ast }a\right) }$ be
the $2$-norm induced by $\tau $ on $A$ and $\overline{A}^{\tau }$. Using
Borel functional calculus \cite[Section I.4.3]{blackadar_operator_2006}, we
fix a continuous function $\varpi \colon \lbrack 0,+\infty )\rightarrow
\lbrack 0,+\infty )$ satisfying the following properties:

\begin{itemize}
\item $\varpi(0)=0$ and $\varpi(t)\geq t$ for all $t\in [0, \infty)$;

\item Let $(M,\tau )$ be a tracial von Neumann algebra, let $a\in M_{\mathrm{%
sa}}$ be a contraction, and let $\varepsilon >0$. If $\Vert a^{2}-a\Vert
_{\tau }<\varepsilon $, then there exists a projection $p\in M$ such that $%
\Vert p-a\Vert _{\tau }<\varpi (\varepsilon )$.
\end{itemize}

The following lemma will be used to show that the space of strongly outer
(weak mixing) actions of a fixed countable group on a unital, separable
C*-algebra is a Polish space; see Proposition~\ref{Proposition:free-G}. Its
proof follows from Lemma~4.2 using an easy approximation argument, which is
presented for the sake of completeness.

\begin{lemma}
\label{Lemma:free} Let $A$ be a unital, separable C*-algebra, let $\theta
\in \mathrm{Aut}\left( A\right) $ be an automorphism of $A$, and let $\tau $
be a $\theta $-invariant trace on $A$. Fix a countable dense subset $A_{0}$
of the unit ball of $A_{\mathrm{sa}}$. Then $\overline{\theta }^{\tau }$ is
properly outer if and only if for every $a\in A_{0}$, and every $\varepsilon
\in (0,+\infty )\cap \mathbb{Q}$ satisfying $\Vert a^{2}-a\Vert _{\tau }\leq
\varepsilon $, there exists a contraction $b\in A_{\mathrm{sa}}$ satisfying 
\begin{equation*}
\Vert b^{2}-b\Vert _{\tau }<\varepsilon ,\ \ \Vert ab-b\Vert _{\tau }<\varpi
(\varepsilon ),\ \ \Vert b\theta (b)\Vert _{\tau }<\varepsilon ,\ \ 
\mbox{
and }\ \ \tau (b)>\frac{1}{3}\tau (a)-\varepsilon .
\end{equation*}
\end{lemma}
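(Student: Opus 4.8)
The plan is to read Lemma~\ref{Lemma:free} as the C*-algebraic, countably-quantified reformulation of the von Neumann-algebraic characterization of proper outerness supplied by ``Lemma~4.2'', and to prove the equivalence by transporting that characterization across the $\|\cdot\|_\tau$-density of the unit ball of $A$ in the unit ball of $\overline{A}^\tau$. Throughout I would write $M=\overline{A}^\tau$, $\overline\theta=\overline\theta^\tau$ and $\|\cdot\|_2=\|\cdot\|_\tau$, and I would use the recorded properties of $\varpi$ (namely $\varpi(0)=0$, $\varpi(t)\ge t$, continuity, and that $\|x^2-x\|_2<\varepsilon$ for a self-adjoint contraction $x\in M$ yields a projection within $\|\cdot\|_2$-distance $\varpi(\varepsilon)$). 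The only von Neumann-algebraic input is that $\overline\theta$ is properly outer precisely when, for every projection $p\in M$ and every $\delta>0$, there is a projection $q\le p$ with $\|q\,\overline\theta(q)\|_2<\delta$ and $\tau(q)>\tfrac13\tau(p)-\delta$; this is the statement I take from Lemma~4.2.

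For the forward implication I would assume $\overline\theta$ properly outer and fix $a\in A_0$ and a rational $\varepsilon>0$ with $\|a^2-a\|_2\le\varepsilon$. First I pass to a genuine projection $p\in M$ with $\|p-a\|_2<\varpi(\varepsilon)$, then apply the characterization to obtain a subprojection $q\le p$ that is $\delta$-almost orthogonal to $\overline\theta(q)$ with $\tau(q)>\tfrac13\tau(p)-\delta$, and finally use the density of the unit ball of $A$ to choose a self-adjoint contraction $b\in A$ with $\|b-q\|_2<\eta$. Taking $\delta$ and $\eta$ small relative to the (strictly positive) slack in $\|p-a\|_2<\varpi(\varepsilon)$, the four required estimates follow from routine $\|\cdot\|_2$-triangle inequalities together with $\|xy\|_2\le\|x\|\,\|y\|_2$: $\|b^2-b\|_2\le 3\|b-q\|_2<\varepsilon$; $\|ab-b\|_2\le\|p-a\|_2+2\eta<\varpi(\varepsilon)$, using $pq=q$; $\|b\theta(b)\|_2$ is within a fixed multiple of $\varepsilon+\delta$ of $\|q\,\overline\theta(q)\|_2$, using $\overline\theta|_A=\theta$; and $\tau(b)>\tfrac13\tau(p)-\delta-\eta>\tfrac13\tau(a)-\varepsilon$, using $|\tau(p)-\tau(a)|\le\|p-a\|_2$ and that $\varpi$ is calibrated so that $\tfrac13\varpi(\varepsilon)+\delta+\eta<\varepsilon$.

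For the converse I would assume the approximation condition and verify the von Neumann-algebraic characterization. Given a projection $p\in M$ and $\delta>0$, I approximate $p$ in $\|\cdot\|_2$ by some $a\in A_0$ (possible since $A_0$ is dense in the unit ball of $A_{\mathrm{sa}}$, which is $\|\cdot\|_2$-dense in the unit ball of $M_{\mathrm{sa}}$), so that $\|a^2-a\|_2\le 3\|a-p\|_2$; choosing a small rational $\varepsilon$ above this quantity, the hypothesis produces a self-adjoint contraction $b\in A$ with the four properties. Functional calculus turns $b$ into a projection $q_0=\chi_{[1/2,1]}(b)\in M$ with $\|q_0-b\|_2\le 2\varepsilon$, and the estimates on $b$ transfer to show that $\|q_0\,\overline\theta(q_0)\|_2$ and $\|pq_0-q_0\|_2$ are small while $\tau(q_0)>\tfrac13\tau(p)-(\text{small})$. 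The final step is to replace $q_0$ by a genuine subprojection $q\le p$: since $\|pq_0-q_0\|_2$ is small, the positive contraction $pq_0p\in pMp$ is $\|\cdot\|_2$-close to a projection, so its spectral projection $q=\chi_{[1/2,1]}(pq_0p)\le p$ stays $\|\cdot\|_2$-close to $q_0$; hence $q$ inherits the required bounds, and taking $\varepsilon$ and the approximation error small enough makes them fit under $\delta$. This yields the characterization, and therefore proper outerness.

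I expect the main obstacle to be this last passage in the converse---manufacturing an honest subprojection $q\le p$ from the merely approximate data---together with the bookkeeping needed to keep the constant $\tfrac13$ and the function $\varpi$ matched throughout. In particular the trace estimate in the forward direction only closes because $\varpi$ is chosen so that $\tfrac13\varpi(\varepsilon)<\varepsilon$ (for instance $\varpi(\varepsilon)=2\varepsilon$ works, since $\|x-\chi_{[1/2,1]}(x)\|_2\le 2\|x^2-x\|_2$ for self-adjoint contractions). All remaining steps are the ``easy approximation argument'' promised in the text.
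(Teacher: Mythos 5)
Your proposal is correct and takes essentially the same route as the paper's proof: both directions rest on the Kerr--Li--Pichot characterization of proper outerness (their Lemma~4.2) via subprojections $q\le p$ with $\|q\overline{\theta}^{\tau}(q)\|_{\tau}$ small and $\tau(q)\ge\tfrac{1}{3}\tau(p)$, transported through the $\|\cdot\|_{\tau}$-density of the unit ball of $A$ in the unit ball of $\overline{A}^{\tau}$, exactly as in the paper. If anything, your converse is more explicit than the paper's (which only asserts that a suitable $q\le p$ exists once $\varepsilon_{0}$ is small enough), since you give the concrete cut-down $q=\chi_{[1/2,1]}(pq_{0}p)$, and your remark that $\varpi$ must be calibrated so that $\tfrac{1}{3}\varpi(\varepsilon)<\varepsilon$ (e.g.\ $\varpi(\varepsilon)=2\varepsilon$) makes explicit a point the paper uses silently in its estimate $\tfrac{1}{3}\tau(p)>\tfrac{1}{3}\tau(a)-\varepsilon$.
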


\begin{proof}
Let $\pi _{\tau }:A\rightarrow B\left( L^{2}\left( A,\tau \right) \right) $
be the GNS representation associated with $\tau $. Suppose that $\overline{%
\theta }^{\tau }$ is properly outer. Let $\varepsilon \in (0,\infty )\cap 
\mathbb{Q}$, and let $a\in A_{0}$ satisfy $\Vert a^{2}-a\Vert _{\tau
}<\varepsilon $. By the choice of $\varpi $, there exists a projection $p\in 
\overline{A}^{\tau }$ such that $\Vert \pi _{\tau }(a)-p\Vert _{\tau
}<\varpi (\varepsilon )$. By (1)$\Rightarrow $(4) in \cite[Lemma 4.2]%
{kerr_turbulence_2010}, there exists a projection $q\in \overline{A}^{\tau }$
such that 
\begin{equation*}
q\leq p,\ \ \Vert q\overline{\theta }^{\tau }(q)\Vert _{\tau }<\varepsilon
,\ \ \mbox{ and }\ \ \tau (q)\geq \frac{1}{3}\tau (p)>\frac{1}{3}\tau
(a)-\varepsilon .
\end{equation*}%
Therefore, $\left\Vert q\pi _{\tau }(a)-q\right\Vert _{\tau }\leq \left\Vert
q-qp\right\Vert _{\tau }+\left\Vert qp-q\pi _{\tau }(a)\right\Vert _{\tau
}<\varpi (\varepsilon )$, and similarly $\left\Vert \pi _{\tau
}(a)q-q\right\Vert _{\tau }<\varpi (\varepsilon )$. Since the norm-unit ball
of $A$ is $\Vert \cdot \Vert _{\tau }$-dense in the unit ball of $\overline{A%
}^{\tau }$, there exists a contraction $b\in A_{\mathrm{sa}}$ satisfying the
conditions in the statement.

We prove the converse. We want to prove that $\overline{\theta }^{\tau }$ is
properly outer. Fix a nonzero projection $p\in \overline{A}^{\tau }$ and $%
\varepsilon ,\varepsilon _{0}>0$. By (4)$\Rightarrow $(1) in \cite[Lemma 4.2]%
{kerr_turbulence_2010}, it is enough to prove that there exists a projection 
$q\in \overline{A}^{\tau }$ such that 
\begin{equation*}
q\leq p,\ \ \Vert q\overline{\theta }^{\tau }(q)\Vert _{\tau }<\varepsilon
,\ \ \mbox{ and }\ \ \tau (q)\geq \frac{1}{3}\tau (p)>\frac{1}{3}\tau
(a)-\varepsilon .
\end{equation*}%
Let $a\in A_{0}$ satisfy $\left\Vert \pi _{\tau }(a)-p\right\Vert _{\tau
}<\varepsilon $ and $\left\Vert a^{2}-a\right\Vert _{\tau }<\varepsilon $.
By assumption, there exists a contraction $b\in A_{\mathrm{sa}}$ with 
\begin{equation*}
\Vert b^{2}-b\Vert _{\tau }<\varepsilon ,\ \ \Vert ab-b\Vert _{\tau }<\varpi
(\varepsilon )\ \ \Vert b\theta (b)\Vert _{\tau }<\varepsilon ,\ \ 
\mbox{
and }\ \ \tau (b)>\frac{1}{3}\tau (a)-\varepsilon .
\end{equation*}%
By the choice of $\varpi $, there exists a projection $r\in \overline{A}%
^{\tau }$ such that $\left\Vert \pi _{\tau }(r)-b\right\Vert _{\tau }<\varpi
(\varepsilon )$. By choosing $\varepsilon $ small enough, one can ensure
that 
\begin{equation*}
\left\Vert pr-r\right\Vert _{\tau }<\varepsilon _{0},\ \ \left\Vert
rp-p\right\Vert _{\tau }<\varepsilon _{0},\ \ \left\Vert r\overline{\theta }%
^{\tau }(r)\right\Vert _{\tau }<\varepsilon _{0},\ \ \mbox{ and }\ \ \tau
(r)>\frac{1}{3}\tau (a)-\varepsilon _{0}.
\end{equation*}%
By choosing $\varepsilon _{0}$ small enough, one can then find a projection $%
q\in \overline{A}^{\tau }$ satisfying the conditions in item (4) of \cite[%
Lemma 4.2]{kerr_turbulence_2010} mentioned above. This concludes the proof.
\end{proof}

For convenience, we record the following easy lemma. For a relation $%
\mathcal{\mathcal{R}}\subset X\times Y$, its \emph{projection onto $X$} is 
\begin{equation*}
\mathrm{proj}_{X}(\mathcal{R})=\left\{ x\in X\colon \mbox{ there is } y\in Y %
\mbox{ with } ( x,y) \in \mathcal{R}\right\} .
\end{equation*}

\begin{lemma}
\label{Lemma:projection} Let $X$ be a Polish space, let $Y$ be a compact
metrizable space, and let $\mathcal{R}\subset X\times Y$ be a subset. If $%
\mathcal{R}$ is closed, then $\mathrm{proj}_{X}(\mathcal{R})$ is closed. If $%
\mathcal{R}$ is $F_{\sigma }$, then $\mathrm{proj}_{X}(\mathcal{R})$ is $%
F_{\sigma }$.
\end{lemma}

\begin{proof}
It is enough to prove the first assertion, so assume that $\mathcal{R}$ is
closed. Let $( x_{n}) _{n\in \mathbb{N}}$ be a sequence in $\mathrm{proj}%
_{X}(\mathcal{R})$ converging to $x\in X$. Our goal is to prove that $x\in 
\mathrm{proj}_{X}(\mathcal{R})$. For every $n\in \mathbb{N}$, let $y_{n}\in
Y $ satisfy $( x_{n},y_{n}) \in \mathcal{R}$. Since $Y$ is compact, after
passing to a subsequence, we can assume that the sequence $( y_{n})_{n\in%
\mathbb{N}} $ converges to some $y\in Y$. Since $\mathcal{R}$ is closed, we
have $( x,y) \in \mathcal{R}$ and hence $x\in \mathrm{proj}_{X}(\mathcal{R})$%
, as desired.
\end{proof}

Recall the definitions of the sets $\mathrm{SO}_\Lambda(A) $ and $\mathrm{SOWM}%
_\Lambda(A) $ from Notation~\ref{nota:SpacesActions}.

\begin{proposition}
\label{Proposition:free-G} Let $A$ be a unital, separable C*-algebra, let $%
\Lambda$ be a countable group, and let $\tau$ be a trace on $A$. Then the
sets $\mathrm{SO}_\Lambda(A) $ and $\mathrm{SOWM}_\Lambda(A, \tau ) $ are $%
G_{\delta }$ subsets of $\mathrm{Act}_\Lambda(A) $.
\end{proposition}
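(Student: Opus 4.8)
The plan is to show that the complement $\mathrm{Act}_\Lambda(A)\setminus \mathrm{SO}_\Lambda(A)$ is $F_\sigma$, from which the $G_\delta$ assertion for $\mathrm{SO}_\Lambda(A)$ follows, and then to deduce the statement for $\mathrm{SOWM}_\Lambda(A,\tau)$ by intersecting with $\mathrm{WM}_\Lambda(A,\tau)$, which was already observed to be $G_\delta$. By Definition~\ref{Definition:tau-conjugacy} (together with the remark allowing one to replace ``outer'' by ``properly outer''), an action $\alpha$ fails to be strongly outer exactly when there are $\gamma\in\Lambda\setminus\{1\}$ and an $\alpha_\gamma$-invariant trace $\sigma$ with $\overline{\alpha}^\sigma_\gamma$ not properly outer. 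Since $\Lambda\setminus\{1\}$ is countable, it suffices to fix $\gamma$ and study the set of such ``bad'' pairs $(\alpha,\sigma)$ inside the product $\mathrm{Act}_\Lambda(A)\times \mathrm{T}(A)$, keeping $\sigma$ as a free coordinate so that the quantifier over traces can be discharged at the end.

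First I would introduce, for a contraction $b\in A_{\mathrm{sa}}$ and parameters $a\in A_0$ (the countable dense set from Lemma~\ref{Lemma:free}) and $\varepsilon\in(0,\infty)\cap\mathbb{Q}$, the condition $P_{\gamma,a,\varepsilon,b}(\alpha,\sigma)$ given by the four strict inequalities appearing in Lemma~\ref{Lemma:free}, taken with $\theta=\alpha_\gamma$ and $\tau=\sigma$. The key point is that each of these is jointly continuous in $(\alpha,\sigma)$: the maps $\sigma\mapsto\sigma(x^\ast x)$ are w*-continuous for fixed $x$, the map $\alpha\mapsto\alpha_\gamma(b)$ is norm-continuous by the definition of the topology on $\mathrm{Aut}(A)^\Lambda$, and $\sigma\mapsto\sigma(a)$, $\sigma\mapsto\sigma(b)$ are w*-continuous; hence $(\alpha,\sigma)\mapsto\|b\alpha_\gamma(b)\|_\sigma$ and the remaining norm expressions are jointly continuous. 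Consequently each $P_{\gamma,a,\varepsilon,b}$ defines an open subset, so $\bigcup_{b}\{P_{\gamma,a,\varepsilon,b}\}$ is open and its complement $\{\forall b\ \neg P_{\gamma,a,\varepsilon,b}\}$ is closed.

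Next I would set
\[
E_\gamma=\bigcup_{a\in A_0,\ \varepsilon\in(0,\infty)\cap\mathbb{Q}}\Big(\{\sigma\ \alpha_\gamma\text{-invariant}\}\cap\{\|a^2-a\|_\sigma\le\varepsilon\}\cap\{\forall b\ \neg P_{\gamma,a,\varepsilon,b}\}\Big).
\]
By Lemma~\ref{Lemma:free}, a pair $(\alpha,\sigma)$ with $\sigma$ being $\alpha_\gamma$-invariant lies in $E_\gamma$ if and only if $\overline{\alpha}^\sigma_\gamma$ is not properly outer. Each set in the intersection is closed ($\alpha_\gamma$-invariance is an equality of continuous functions, the middle condition is a non-strict inequality, and the third was just shown closed), so every term of the union is closed and $E_\gamma$ is $F_\sigma$. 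Since $\mathrm{T}(A)$ is compact metrizable, Lemma~\ref{Lemma:projection} gives that $\mathrm{proj}_{\mathrm{Act}_\Lambda(A)}(E_\gamma)$ is $F_\sigma$, whence $\mathrm{Act}_\Lambda(A)\setminus\mathrm{SO}_\Lambda(A)=\bigcup_{\gamma\neq 1}\mathrm{proj}_{\mathrm{Act}_\Lambda(A)}(E_\gamma)$ is $F_\sigma$ and $\mathrm{SO}_\Lambda(A)$ is $G_\delta$. Finally $\mathrm{SOWM}_\Lambda(A,\tau)=\mathrm{SO}_\Lambda(A)\cap\mathrm{WM}_\Lambda(A,\tau)$ is an intersection of two $G_\delta$ sets, hence $G_\delta$.

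The main obstacle is the quantification over the uncountable family of invariant traces $\sigma$: the characterization in Lemma~\ref{Lemma:free} is countable in $a$ and $\varepsilon$, but the trace itself cannot be eliminated directly. The device that resolves this is to retain $\sigma$ as a coordinate, arrange every defining condition to be closed (which dictates the careful bookkeeping of strict versus non-strict inequalities and of joint continuity above), and then discharge the $\exists\sigma$ quantifier through the projection Lemma~\ref{Lemma:projection}, which is precisely where compactness of $\mathrm{T}(A)$ enters.
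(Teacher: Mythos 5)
Your proof is correct and follows essentially the same route as the paper: fix $\gamma\neq 1$, encode the failure of proper outerness of $\overline{\alpha}^\sigma_\gamma$ as an $F_\sigma$ set of pairs $(\alpha,\sigma)$ via Lemma~\ref{Lemma:free}, discharge the trace quantifier with the projection Lemma~\ref{Lemma:projection} using compactness of $\mathrm{T}(A)$, and finish by complementation, intersection over $\gamma$, and intersection with $\mathrm{WM}_\Lambda(A,\tau)$. The only difference is that you spell out the joint-continuity and open/closed bookkeeping that the paper leaves implicit in the phrase ``By Lemma~\ref{Lemma:free}, $\mathcal{R}_\gamma$ is an $F_\sigma$ subset,'' which is a welcome elaboration rather than a deviation.
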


\begin{proof}
Fix $\gamma \in \Lambda $. Let $\mathcal{R}_{\gamma }$ be the set of pairs $%
\left( \alpha ,\tau \right) \in \mathrm{\mathrm{Act}}_{\Lambda }\left(
A\right) \times \mathrm{T}\left( A\right) $ such that $\tau $ is $\alpha
_{\gamma }$-invariant and $\overline{\alpha }_{\gamma }^{\tau }$ is \emph{not%
} properly outer. By Lemma~\ref{Lemma:free}, $\mathcal{R}_{\gamma }$ is an $%
F_{\sigma }$ subset of $\mathrm{Aut}_{\Lambda}(A)$. By Lemma~\ref%
{Lemma:projection}, its projection $\mathcal{P}_{\gamma }$ onto $\mathrm{Act}%
_{\Lambda }\left( A\right) $ is $F_{\sigma }$ as well. Let $\mathcal{C}%
_{\gamma }$ be the complement of $\mathcal{P}_{\gamma }$ in $\mathrm{Act}%
_{\Lambda }\left( A\right) $, which is $G_{\delta }$. We have that $\mathrm{SO%
}_{\Lambda }\left( A\right) $ is the intersection of $\mathcal{C}_{\gamma }$
for $\gamma \in \Lambda $, and hence $G_{\delta }$. We have already observed
that $\mathrm{WM}_{\Lambda }(A,\tau )$ is $G_{\delta }$. Therefore $\mathrm{%
FWM}_{\Lambda }(A,\tau )=\mathrm{WM}_{\Lambda }(A,\tau )\cap \mathrm{SO}%
_{\Lambda }(A)$ is $G_{\delta }$ as well.
\end{proof}

Adopt the notation of the lemma above. We regard $\mathrm{SO}_{\Lambda }(A)$
as the Polish space of strongly outer actions of $\Lambda $ on $A$.
Consistently, we regard the classification problems for strongly outer
actions of $\Lambda $ on $A$ up to conjugacy, cocycle conjugacy, or weak
cocycle conjugacy, as equivalence relations on $\mathrm{SO}_{\Lambda }(A)$.
Similarly, if $\tau $ is a trace on $A$, we regard $\mathrm{SOWM}_{\Lambda
}(A,\tau )$ as the space of $\tau $-preserving weakly $\tau $-mixing
strongly outer actions of $\Lambda $ on $A$. On the latter space we can also
consider the relations of $\tau $-conjugacy, cocycle $\tau $-conjugacy, and
outer $\tau $-conjugacy.

\subsection{Parametrizing abelian pro-\texorpdfstring{$p$}{p} groups\label%
{Sbs:groups}}

Fix a prime number $p$. In this subsection, we define a compact metrizable
space parametrizing in a canonical way all second countable abelian pro-$p$
groups. The construction is analogous to the one from \cite[Section 2.2]%
{nies_complexity_2016}.

Let $\mathbb{Z}^{\infty }$ be the free abelian group on a countably infinite
set $\{ x_{k}\colon k\in \mathbb{N}\} $ of generators. Let $\mathcal{N}$ be
the (\emph{countable}) collection of finite index subgroups of $\mathbb{Z}%
^{\infty }$ whose index is a multiple of $p$, and which contain all but
finitely many of the generators of $\mathbb{Z}^{\infty }$. We consider $%
\mathbb{Z}^{\infty }$ as a topological group having the elements of $%
\mathcal{N}$ as basis of neighborhoods of the identity. Define $\mathbb{\hat{%
Z}}_{p}^{\infty }$ to be the completion of $\mathbb{Z}^{\infty } $ with
respect to such a topology, which is a second countable abelian pro-$p $
group. In the terminology of \cite[Section 3.3]{ribes_profinite_2010}, $%
\mathbb{\hat{Z}}_{p}^{\infty }$ is the free abelian pro-$p$ group on a
sequence of generators $(x_{k})_{k\in \mathbb{N}}$ converging to $1$.

Suppose that $G$ is a second countable abelian pro-$p$ group. By \cite[%
Proposition 2.4.4 and Proposition 2.6.1]{ribes_profinite_2010} $G$ has a
generating sequence converging to the identity. It therefore follows from 
\cite[Section 3.3.16]{ribes_profinite_2010} that there exists a surjective
continuous group homomorphism $\pi \colon \mathbb{\hat{Z}}_{p}^{\infty
}\rightarrow G$. In other words, $G$ is isomorphic to the quotient of $%
\mathbb{\hat{Z}}_{p}^{\infty }$ by a closed subgroup. Conversely, any
quotient of $\mathbb{\hat{Z}}_{p}^{\infty }$ by a closed subgroup is a
second-countable abelian pro-$p$ group. Thus the closed subgroups of $%
\mathbb{\hat{Z}}_{p}^{\infty }$ naturally parametrize all second-countable
abelian pro-$p$ groups.

We let $\mathcal{K}(\mathbb{\hat{Z}}_{p}^{\infty })$ be the space of closed
subsets of $\mathbb{\hat{Z}}_{p}^{\infty }$ endowed with the Vietoris
topology \cite[Section 4.F]{kechris_classical_1995}, which turns it into a
compact metrizable space. Let also $\mathcal{S}(\mathbb{\hat{Z}}_{p}^{\infty
})\subseteq \mathcal{K}(\mathbb{\hat{Z}}_{p}^{\infty })$ be the (closed)
subset of closed subgroups of $\mathbb{\hat{Z}}_{p}^{\infty }$. Then $%
\mathcal{S}(\mathbb{\hat{Z}}_{p}^{\infty })$ is a compact metrizable space
with the relative topology. We regard isomorphism of second-countable
abelian pro-$p$ groups as an equivalence relation on $\mathcal{S}(\mathbb{%
\hat{Z}}_{p}^{\infty })$.

\begin{proposition}
Let $p$ be a prime number. The relation of topological isomorphism of
second-countable abelian pro-$p$ groups is a complete analytic set.
\end{proposition}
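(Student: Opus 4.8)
The plan is to establish the two halves of the assertion separately: that the isomorphism relation is analytic, and that it is complete. Throughout I would pass through Pontryagin duality, which identifies the second-countable abelian pro-$p$ group $G_S=\mathbb{\hat{Z}}_p^\infty/S$ attached to $S\in\mathcal{S}(\mathbb{\hat{Z}}_p^\infty)$ with its discrete countable abelian $p$-group dual $\widehat{G_S}=S^{\perp}$, a subgroup of the fixed divisible group $D=\widehat{\mathbb{\hat{Z}}_p^\infty}\cong\bigoplus_{\mathbb{N}}\mathbb{Z}(p^\infty)$. Since annihilation $S\mapsto S^{\perp}$ is a bijection between closed subgroups of $\mathbb{\hat{Z}}_p^\infty$ and subgroups of $D$, and $G_{S_0}\cong G_{S_1}$ topologically if and only if $S_0^{\perp}\cong S_1^{\perp}$ abstractly, it suffices to analyze isomorphism of countable abelian $p$-groups presented as subgroups of $D$. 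The first thing I would check is that this duality is given by a Borel map between the relevant parametrizing spaces, which is routine from the explicit formula for $S^{\perp}$.

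For analyticity I would use the standard description of isomorphism of countable structures. Writing $H_i=S_i^{\perp}\subseteq D$, an isomorphism $H_0\to H_1$ is a bijection of these countable sets that preserves the group operation, and such a map is coded by a point of the Polish space $\mathbb{N}^{\mathbb{N}}$. The conditions \emph{``is a homomorphism with domain $H_0$ and range $H_1$''} and \emph{``is a bijection''} are Borel in the pair $(S_0,S_1)$ together with the coding point, so existentially quantifying over the coding point exhibits $\{(S_0,S_1):G_{S_0}\cong G_{S_1}\}$ as the Borel image of a Borel set, hence analytic.

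For completeness it is enough, by the definition of a complete analytic set, to produce a single pro-$p$ group $G_\infty$ whose isomorphism class is complete analytic: the section map $S\mapsto(S,S_\infty)$, where $G_{S_\infty}\cong G_\infty$, then Borel reduces that class to the isomorphism relation. I would construct such a class by a Borel reduction from the set of ill-founded trees on $\mathbb{N}$, which is complete analytic as recalled above. Concretely, I would assign to each tree $T$ a countable abelian $p$-group $A_T$ (a subgroup of $D$, hence a point of $\mathcal{S}(\mathbb{\hat{Z}}_p^\infty)$ after dualizing) by placing one generator $g_s$ at each node $s$ and imposing the relations $p\cdot g_s=g_{s^-}$ along edges, so that an infinite branch of $T$ produces a chain of elements of unbounded $p$-height and hence a divisible summand isomorphic to $\mathbb{Z}(p^\infty)$, while a well-founded $T$ yields a reduced group. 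The assignment $T\mapsto A_T$ is manifestly Borel, and the point is to arrange it so that $A_T$ is isomorphic to one fixed group $G_\infty$ precisely when $T$ is ill-founded.

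The main obstacle is exactly this last calibration: ensuring that the reduction lands in a single isomorphism type. A naive construction makes the reduced part of $A_T$ (its Ulm invariants) depend on the rank of $T$, so that distinct well-founded trees give non-isomorphic reduced groups, and distinct ill-founded trees need not give isomorphic groups either. I would handle this by adjoining a fixed auxiliary reduced group $R$ as a direct summand which dominates and pins down all the Ulm invariants independently of $T$, using only the divisibility coming from branches to distinguish the two cases; Ulm's classification theorem for countable abelian $p$-groups would then certify that $A_T\cong\mathbb{Z}(p^\infty)\oplus R=:G_\infty$ when $T$ is ill-founded and $A_T\cong R\not\cong G_\infty$ when $T$ is well-founded. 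Transporting back through the duality of the first paragraph converts this into the desired statement about abelian pro-$p$ groups.
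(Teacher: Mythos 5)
The first half of your proposal is correct and is essentially the paper's own route: the paper likewise passes through Pontryagin duality, checking that the correspondence between closed subgroups of $\mathbb{\hat{Z}}_p^{\infty}$ and countable abelian $p$-groups is implemented by Borel maps, and analyticity is the routine quantification over coded bijections that you describe. The divergence is in the completeness half: the paper does not prove that isomorphism of countable abelian $p$-groups is a complete analytic set --- it cites this as \cite[Theorem 6]{friedman_borel_1989} --- whereas you attempt to prove it, and your argument has a gap that cannot be repaired in the form you propose.

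The gap is your very first reduction step: there is \emph{no} pro-$p$ group $G_\infty$ whose isomorphism class is a complete analytic set, so the object your tree construction is trying to calibrate towards does not exist. By Scott's isomorphism theorem together with the Lopez--Escobar theorem, the isomorphism class of any fixed countable structure --- in particular of any countable abelian $p$-group --- is Borel, being the set of models of that structure's Scott sentence (see \cite{gao_invariant_2009}). By your own first paragraph, $S\mapsto S^{\perp}$ is a Borel map turning topological isomorphism of the groups $G_S$ into abstract isomorphism of the countable groups $S^{\perp}$; hence the isomorphism class of any fixed $G_\infty$ inside $\mathcal{S}(\mathbb{\hat{Z}}_p^{\infty})$ is the preimage of a Borel set under a Borel map, hence Borel, hence not complete analytic. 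This structural obstruction is also why your Ulm calibration "main obstacle" cannot be overcome: for ill-founded $T$ the reduced part of $A_T$ has unbounded countable Ulm length as $T$ varies (attach to an infinite branch a well-founded tree of arbitrarily large rank), so a fixed countable reduced group $R$ absorbing all of them would need $u_\beta(R)=\aleph_0$ for cofinally many $\beta<\omega_1$, which is impossible for a countable group; the multiplicity $k$ of the divisible part $\mathbb{Z}(p^\infty)^{(k)}$ also varies with $T$. The non-Borelness of the isomorphism relation lives in the unboundedness of the Borel ranks of its classes, not in any single class, so any correct reduction from ill-foundedness must output a \emph{pair} of groups, both depending on $T$; this is what the Friedman--Stanley argument does, and it is genuinely harder than your sketch. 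As written, your proposal establishes analyticity but not completeness; replacing your third and fourth paragraphs by an appeal to \cite[Theorem 6]{friedman_borel_1989}, as the paper does, would repair it.
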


\begin{proof}
As it is observed in \cite[Section 4]{nies_complexity_2016}, Pontryagin's
duality theorem in the special case of profinite \emph{abelian} groups is
witnessed by a Borel map, and for $p$-groups, the parametrization given in 
\cite[Section 4]{nies_complexity_2016} is compatible with the one discussed
above, as we now show.

In \cite[Section 4]{nies_complexity_2016}, abelian profinite groups are
parametrized as follows. Let $F_{\omega }$ be the free group on a countably
infinite set of generators, let $\hat{F}_{\omega }$ be the completion of $%
F_{\omega }$ with respect to the collection of finite index subgroups of $%
F_{\omega }$ which contain all but finitely many of the generators. and let $%
\pi :\hat{F}_{\omega }\rightarrow \mathbb{\hat{Z}}_{p}^{\infty }$ be the
canonical quotient mapping that sends generators to generators. One denotes
by $\mathcal{N}_{\mathrm{ab}}(\hat{F}_{\omega })$ the space of closed normal
subgroup of $\hat{F}_{\omega }$ that contain the commutator subgroup of $%
\hat{F}_{\omega }$. This is a closed subspace of the space of closed subsets
of $\hat{F}_{\omega }$ endowed with the Vietoris topology. Since any second
countable profinite abelian group is a quotient of $\hat{F}_{\omega }$ by an
element of $\mathcal{N}_{\mathrm{ab}}(\hat{F}_{\omega })$, the space $%
\mathcal{N}_{\mathrm{ab}}(\hat{F}_{\omega })$ can be seen as a
parametrization of (presentations of) abelian profinite groups. In this
parametrization, the class of abelian pro-$p$ groups correspond to the
closed subspace $\mathcal{N}_{\mathrm{ab}}(\hat{F}_{\omega })_{p}$ of
elements of $\mathcal{N}_{\mathrm{ab}}(\hat{F}_{\omega })$ that contain $%
\mathrm{\mathrm{Ker}}\left( \pi \right) $. Furthermore, the assignments $%
\mathcal{N}_{\mathrm{ab}}(\hat{F}_{\omega })_{p}\rightarrow \mathcal{S}(%
\mathbb{\hat{Z}}_{p}^{\infty })$, $A\mapsto \pi \left( A\right) $ and $%
\mathcal{S}(\mathbb{\hat{Z}}_{p}^{\infty })\rightarrow \mathcal{N}_{\mathrm{%
ab}}(\hat{F}_{\omega })_{p}$, $A\mapsto \pi ^{-1}\left( A\right) $ are Borel
functions that map presentations for a given abelian pro-$p$ in one
parametrization to presentations for the same abelian pro-$p$ group in the
other parametrization. This shows that the parametrization for abelian pro-$%
p $ groups introduced above is compatible with the parametrization of
arbitrary abelian profinite groups considered in \cite[Section 4]%
{nies_complexity_2016}.

The duals of abelian pro-$p$ groups are precisely the countable abelian $p$%
-groups; see \cite[Theorem 2.9.6 and Lemma 2.9.3]{ribes_profinite_2010}.
Therefore, the relation of isomorphism of countable abelian $p$-groups is
Borel reducible (in fact, Borel isomorphic) to the relation of isomorphism
of second-countable abelian pro-$p$ groups. Since the relation of
isomorphism of countable abelian $p$-groups is a complete analytic set \cite[%
Theorem 6]{friedman_borel_1989}, the result follows.
\end{proof}

\subsection{Reducing groups to actions\label{Sbs:reduction}}

In this last subsection, we obtain the main results of this work. Recall
that for a discrete group $\Lambda $, a separable C*-algebra $A$, and a
trace $\tau $ on $A$, we denote by $\mathrm{SOWM}_{\Lambda }(A,\tau )$ the
Polish space of $\tau $-preserving strongly outer weakly $\tau $-mixing
actions of $\Lambda $ on $A$. Below, we will assume all the C*-algebras to
be \emph{separable}.\emph{\ }In the proof of the following theorem we will
tacitly use the fact---proved in \cite%
{farah_descriptive_2012,farah_turbulence_2014,gardella_conjugacy_2016}%
---that tensor products, direct limits, and crossed products of C*-algebras
and C*-dynamical systems are given by Borel functions with respect to the
parameterizations of C*-algebras and C*-dynamical systems considered in \cite%
{farah_descriptive_2012,farah_turbulence_2014,gardella_conjugacy_2016}.\ It
is also not difficult to see that fixed point algebras of actions of \emph{%
compact }groups on C*-algebras can be computed in a Borel way.

\begin{theorem}
\label{Theorem:conjugacy2}Let $\Lambda $ be a countable group containing an
infinite relative property \emph{(T)} subgroup. Fix a prime number $p$. Let $%
A$ be a separable, locally reflexive, $M_{p^{\infty }}$-absorbing, unital
C*-algebra with an amenable trace, satisfying $A\cong A^{\otimes \mathbb{N}}$%
. Then there exists an extreme, amenable trace $\tau $ on $A$ such that the
relation of isomorphism of second-countable abelian pro-$p$ groups is Borel
reducible to the following equivalence relations on $\mathrm{SOWM}_{\Lambda
}(A,\tau )$:

\begin{enumerate}
\item conjugacy;

\item cocycle conjugacy;

\item weak cocycle conjugacy;

\item $\tau $-conjugacy;

\item cocycle $\tau $-conjugacy;

\item weak cocycle $\tau $-conjugacy
\end{enumerate}
\end{theorem}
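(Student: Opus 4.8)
The plan is to exhibit a single Borel map $\Phi\colon \mathcal{S}(\mathbb{\hat{Z}}_{p}^{\infty})\to \mathrm{SOWM}_{\Lambda}(A,\tau)$ that simultaneously reduces isomorphism of abelian pro-$p$ groups to all six equivalence relations. First I fix an extreme amenable trace $\tau_{0}$ on $A$ (available since $\mathrm{T}_{\mathrm{am}}(A)$ is a nonempty face of $\mathrm{T}(A)$) and set $\tau=\iota(\tau_{0})$, which is extreme, amenable, and $\alpha^{G}$-invariant for every abelian pro-$p$ group $G$ by Proposition~\ref{prop:InvariantTracesAllG}. For a closed subgroup $H\leq \mathbb{\hat{Z}}_{p}^{\infty}$, let $G_{H}=\mathbb{\hat{Z}}_{p}^{\infty}/H$ be the associated second countable abelian pro-$p$ group (as in Subsection~\ref{Sbs:groups}), and define $\Phi(H)=\alpha^{G_{H}}$, the action of Definition~\ref{df:alpha}. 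By parts~(4) and~(5) of Proposition~\ref{prop:AlternativeDescrAlpha}, each $\alpha^{G_{H}}$ is strongly outer and $\tau$-mixing, so $\Phi$ takes values in the Polish space $\mathrm{SOWM}_{\Lambda}(A,\tau)$ of Proposition~\ref{Proposition:free-G}; the same map will serve all six relations.

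Next I would verify that $\Phi$ is Borel. The assignment $G\mapsto \alpha^{G}$ is built by iterating operations that are Borel in the relevant parametrizations: passing from $H$ to the inverse system of finite quotients of $G_{H}$ and then to the model action $\delta^{G_{H}}$ on $D_{G_{H}}\cong M_{p^{\infty}}$ via the direct-limit construction of Theorem~\ref{thm:ModelActionProfinite} (using Lemma~\ref{inverse limit action}); forming the diagonal action $(\delta^{G_{H}})^{\otimes\Lambda}$ and its fixed point algebra $E_{G_{H}}$; restricting the Bernoulli action; tensoring with $\sigma$; and conjugating by $\xi_{G_{H}}$. Direct limits, tensor products, crossed products, and fixed point algebras of compact group actions are all computed by Borel functions in the parametrizations of \cite{farah_descriptive_2012,farah_turbulence_2014,gardella_conjugacy_2016}, so $\Phi$ is Borel. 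This definability check is the main obstacle, since the genuinely algebraic content has already been isolated in Theorem~\ref{Theorem:conjugacy}.

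It remains to check that $G_{H_{0}}\cong G_{H_{1}}$ if and only if $\Phi(H_{0})\mathrel{E}\Phi(H_{1})$ for each of the six relations $E$. Theorem~\ref{Theorem:conjugacy} already gives this equivalence for relations~(1), (2), and~(6) (conjugacy, cocycle conjugacy, and weak cocycle $\tau$-conjugacy). For the forward direction of the remaining relations I would observe that the conjugacy produced in the proof of Theorem~\ref{Theorem:conjugacy} from an isomorphism $G_{H_{0}}\cong G_{H_{1}}$ is \emph{trace-preserving}, since it descends from the equivariant isomorphism $(D,\delta^{G_{H_{0}}})\cong(D,\delta^{G_{H_{1}}})$, which preserves the unique trace of $D\cong M_{p^{\infty}}$. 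A trace-preserving conjugacy is at once a conjugacy and a $\tau$-conjugacy, hence implies all six relations, yielding the forward direction throughout.

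For the backward direction of relations~(3), (4), and~(5) I would squeeze each between relations already handled. Relations~(4) and~(5) ($\tau$-conjugacy and cocycle $\tau$-conjugacy) each imply relation~(6), which by Theorem~\ref{Theorem:conjugacy} forces $G_{H_{0}}\cong G_{H_{1}}$. For relation~(3) (plain weak cocycle conjugacy) I would argue exactly as in the implication (3)$\Rightarrow$(4) of Theorem~\ref{Theorem:conjugacy}: because $\tau$ is invariant under both actions and is preserved by any cocycle perturbation (being a trace), a plain weak cocycle conjugacy passes to the weak closures and produces a weak cocycle $\tau$-conjugacy, whence $G_{H_{0}}\cong G_{H_{1}}$ once more. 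Composing $\Phi$ with the Borel identification of isomorphism of abelian pro-$p$ groups with the equivalence relation on $\mathcal{S}(\mathbb{\hat{Z}}_{p}^{\infty})$ from Subsection~\ref{Sbs:groups} then completes the reduction for all six relations.
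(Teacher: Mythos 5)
Your overall architecture is the same as the paper's: reduce everything to the claim that $G\mapsto\alpha^{G}$ is given by a Borel function, and quote Theorem~\ref{Theorem:conjugacy} together with parts~(4) and~(5) of Proposition~\ref{prop:AlternativeDescrAlpha} for the dynamical content. Your bookkeeping of the six relations is in fact more explicit than the paper's and is sound: the forward direction via the observation that the conjugacy induced by a group isomorphism is trace-preserving (it descends from a trace-preserving isomorphism of the fixed point algebras, using Proposition~\ref{prop:InvariantTracesAllG}), and the backward directions by sandwiching relations~(3)--(5) between cocycle conjugacy and weak cocycle $\tau$-conjugacy, exactly as the implications of Theorem~\ref{Theorem:conjugacy} are used in the paper.

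The gap is in the Borelness verification, at precisely the point where the paper does its real work. Producing the trivializing isomorphism $\xi_{G_H}\colon E_{G_H}\otimes A\to A$ in a Borel fashion is \emph{not} covered by the results of \cite{farah_descriptive_2012,farah_turbulence_2014,gardella_conjugacy_2016} that you invoke: those show that functorial constructions (tensor products, direct limits, crossed products, fixed point algebras) are Borel as maps from codes to codes, whereas here one must \emph{select}, for each code of an algebra that is merely abstractly isomorphic to a fixed algebra, one concrete isomorphism onto that algebra --- a uniformization problem, not a functorial construction. Without this selection, your construction only yields a Borel assignment $H\mapsto$ code of the $\Lambda$-C*-algebra $\left( E_{G_H}\otimes A,\ \beta_{\Lambda\curvearrowright\Lambda,D}|_{E_{G_H}}\otimes\sigma\right)$, i.e.\ actions on varying underlying algebras, and not a Borel map into $\mathrm{SOWM}_{\Lambda}(A,\tau)\subseteq\mathrm{Aut}(A)^{\Lambda}$. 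The paper resolves this by considering the set $\mathcal{A}$ of pairs $(f,\bar{a})$ with $f\in\mathrm{UHF}_{p^{\infty}}$ and $\bar{a}$ coding a *-isomorphism $C^{\ast}(f)\to M_{p^{\infty}}$, observing that each fiber $\mathcal{A}_{f}$ is a single orbit of the natural $\mathrm{Aut}(M_{p^{\infty}})$-action, and invoking \cite[Theorem A]{elliott_isomorphism_2013} to obtain a Borel uniformization of $\mathcal{A}$; only with this in hand can $\xi_{G_H}$, and hence $\alpha^{G_H}$ as an element of $\mathrm{Aut}(A)^{\Lambda}$, be chosen measurably. A smaller instance of the same issue occurs upstream: the inverse system of finite quotients of $G_{H}$ must itself be extracted in a Borel way, which the paper does via the Kuratowski--Ryll-Nardzewski selection theorem rather than by appeal to functoriality. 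Since your proposal treats both selections as automatic, the central technical step of the proof is missing.
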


\begin{proof}
In view of Theorem \ref{Theorem:conjugacy}, and parts~(4) and~(5) of
Proposition~\ref{prop:AlternativeDescrAlpha}, it is enough to prove that the
function $G\mapsto \alpha ^{G}$ that assigns to a second-countable abelian
pro-$p$ group $G$ the action $\alpha ^{G}:\Lambda \rightarrow \mathrm{Aut}%
\left( A\right) $ from Definition \ref{df:alpha}, is given by a Borel
function with respect to the parametrization of second-countable abelian pro-%
$p$ groups and strongly outer actions of $\Lambda $ on $A$ described in
Subsection \ref{Sbs:actions} and Subsection \ref{Sbs:groups}.

Recall that the action $\alpha ^{G}$ is defined by 
\begin{equation*}
\alpha _{\gamma }^{G}=\xi _{G}^{-1}\circ (\beta _{\Lambda \curvearrowright
\Lambda ,M_{p}^{\infty }}|_{E_{G}}\otimes \sigma )_{\gamma }\circ \xi _{G}
\end{equation*}%
for $\gamma \in \Lambda $, for some choice of isomorphism $\xi
_{G}:E_{G}\otimes A\rightarrow A$. It is therefore enough to show that

\begin{enumerate}
\item the assignment $G\mapsto E_{G}$ is given by a Borel function, and

\item the isomorphism $\xi _{G}:E_{G}\otimes A\rightarrow A$ can be chosen
in a Borel fashion from $E_{G}$.
\end{enumerate}

We address the second assertion first. Several Borel parameterizations of
separable unital C*-algebras are considered in \cite{farah_turbulence_2014}.
Therein, these parameterizations are shown to be equivalent, in the sense
that one can find Borel functions between any two of them, that map a code
for a C*-algebra in one parametrization to a code for the same C*-algebra in
the other parametrization. Furthermore, it is shown in \cite%
{farah_turbulence_2014,farah_descriptive_2012,gardella_conjugacy_2016} that
the standard constructions of C*-algebra theory, including tensor products
and direct limits, are given by Borel functions with respect to these
parametrizations.

For simplicitly, we consider here the parametrization $\Xi $ from \cite%
{farah_turbulence_2014}, which is defined as follows. Let $\mathbb{Q}(i)$ be
the field of Gaussian rationals, and let $\mathcal{U}$ be the collection of
noncommutative *-polynomials with constant term and with coefficients from $%
\mathbb{Q}( i) $ in the variables $(x_{n})_{n\in \mathbb{N}}$.
Let $\Xi $ be the set of functions $f:\mathcal{U}\rightarrow \mathbb{R}$
such that $f$ defines a seminorm on $\mathcal{U}$ with the property that, if 
$C^{\ast }( f) $ is the Hausdorff completion of $\mathcal{U}$
with respect to the metric defined by $f$, then the unital $\mathbb{Q}(
i) $-$\ast $-algebra structure of $\mathcal{U}$ induces a unital
C*-algebra structure on $C^{\ast }( f) $. For $\mathfrak{p}\in 
\mathcal{U}$, we let $\mathfrak{p}_{f}$ be the corresponding element of $%
C^{\ast }( f) $. It is shown in \cite{farah_turbulence_2014} that 
$\Xi $ is a $G_{\delta }$ subset of $\mathbb{R}^{\mathcal{U}}$ endowed with
the product topology. Furthermore, it follows from stability of the
relations defining the matrix units for a unital copy of $M_{p^{n}}$ that
the set \textrm{UHF}$_{p^{\infty }}$ of codes $f\in \Xi $ such that $C^{\ast
}(f)$ is isomorphic to $M_{p^{\infty }}$ is a $G_{\delta }$ subset of $\Xi $%
, and hence a Polish space with the induced topology. Given $f\in \mathrm{UHF%
}_{p^{\infty }}$, a *-isomorphism $\psi :C^{\ast }( f)
\rightarrow M_{p^{\infty }}$ can be regarded as an element of $(M_{p^{\infty
}})^{\mathcal{U}}$. Indeed, given $\psi :C^{\ast }( f)
\rightarrow M_{p^{\infty }}$ one can consider the element $\bar{a}=( a_{%
\mathfrak{p}}) _{\mathfrak{p}\in \mathcal{U}}$ of $(M_{p^{\infty }})^{%
\mathcal{U}}$ defined by setting $a_{\mathfrak{p}}=\psi (\mathfrak{p}_{f})$
for $\mathfrak{p}\in \mathcal{U}$. Thus, it suffices to show that there
exists a Borel assignment \textrm{UHF}$_{p^{\infty }}\rightarrow (
M_{p^{\infty }}) ^{\mathcal{U}}$, $f\mapsto \bar{a}^{(f)}=(a_{%
\mathfrak{p}}^{(f)})_{\mathfrak{p}\in \mathcal{U}}$ such that the assignment 
$\mathfrak{p}_{f}\mapsto a_{\mathfrak{p}}^{(f)}$ extends to a *-isomorphism
from $C^{\ast }( f) $ to $M_{p^{\infty }}$. To this purpose, we
consider the set $\mathcal{A}$ of pairs $( f,(a_{\mathfrak{p}})_{%
\mathfrak{p}\in \mathcal{U}}) \in \mathrm{UHF}_{p^{\infty }}\times
(M_{p^{\infty }})^{\mathcal{U}}$ such that the assignment $\mathfrak{p}%
_{f}\mapsto a_{\mathfrak{p}}$ extends to a *-isomorphism from $C^{\ast
}( f) $ to $M_{p^{\infty }}$. It is easy to see that $\mathcal{A}$
is a $G_{\delta }$ subset of $\mathrm{UHF}_{p^{\infty }}\times (M_{p^{\infty
}})^{\mathcal{U}}$. Furthermore, the automorphism group $\mathrm{Aut}(
M_{p^{\infty }}) $ of $M_{p^{\infty }}$ naturally acts on $%
(M_{p^{\infty }})^{\mathcal{U}}$, in such a way that, for every $f\in 
\mathrm{UHF}_{p^{\infty }}$, the corresponding fiber 
\begin{equation*}
\mathcal{A}_{f}=\left\{ \bar{a}\in (M_{p^{\infty }})^{\mathcal{U}}:( f,%
\bar{a}) \in \mathcal{A}\right\} 
\end{equation*}%
forms a single orbit under the $\mathrm{Aut}( M_{p^{\infty }}) $%
-action. It follows form these observations and \cite[Theorem A]%
{elliott_isomorphism_2013} that $\mathcal{A}$ admits a \emph{Borel
uniformization}, i.e.\ there exists a Borel function \textrm{UHF}$%
_{p^{\infty }}\rightarrow ( M_{p^{\infty }}) ^{\mathcal{U}}$, $%
f\mapsto \bar{a}^{(f)}$ such that $( f,\bar{a}^{(f)}) \in 
\mathcal{A}$ for every $f\in \mathrm{UHF}_{p^{\infty }}$. This allows one to
choose in a Borel fashion, given a C*-algebra $E$ abstractly isomorphic to $%
M_{p^{\infty }}$, a *-isomorphism $\psi _{E}:E\rightarrow M_{p^{\infty }}$.
Since $M_{p^{\infty }}\otimes A$ is isomorphic to $A$, by fixing an
isomorphism $M_{p^{\infty }}\otimes A\cong A$ beforehand, one can choose in
a Borel fashion an isomorphism $\xi _{E}\colon E\otimes A\rightarrow A$.
This justifies the second assertion.

We now justify the first assertion. Recall that $E_{G}$ is the fixed point
algebra of the action $(\delta ^{G})^{\otimes \Lambda }\colon G\rightarrow 
\mathrm{Aut}(D_{G}^{\otimes \Lambda })$. Furthermore, $(\delta
^{G})^{\otimes \Lambda }$ is conjugate to canonical model action $\delta
^{G}:G\rightarrow \mathrm{\mathrm{Aut}}\left( D_{G}\right) $ of $G$
constructed in Theorem \ref{thm:ModelActionProfinite}. Therefore, it is
enough to show that $\delta ^{G}$ can be constructed in a Borel fashion from 
$G$. This is clear when $G$ is finite in view of Remark \ref%
{rmk:modelFiniteG}. In the general case, consider the following. In our
parametrization, a second-countable abelian pro-$p$ group $G$ is given as
the quotient $\mathbb{\hat{Z}}_{p}^{\infty }/N$ of $\mathbb{\hat{Z}}%
_{p}^{\infty }$ by some closed subgroup $N$ of $\mathbb{\hat{Z}}_{p}^{\infty
}$. The finite-index closed subgroups of $G$ correspond to finite-index
closed subgroups $H$ of $\mathbb{\hat{Z}}_{p}^{\infty }$ that contain $N$.
By the Kuratowski--Ryll-Nardzewski selection theorem \cite[Theorem 12.13]%
{kechris_classical_1995}, the collection $\mathcal{V}$ of finite-index
closed subgroups $H$ of $\mathbb{\hat{Z}}_{p}^{\infty }$ that contain $N$
can be chose in a Borel fashion starting from $N$. Since the relation of
inclusion between closed subgroups is closed with respect to the Vietoris
topology, the order on $\mathcal{V}$ given by containment is Borel. This
shows that the canonical inverse system $(G_{i},\pi _{i,j})_{i,j\in \mathcal{%
V}}$ of finite groups having $G$ as inverse limit considered in the proof of
Theorem \ref{thm:ModelActionProfinite} depends on $G$ in a Borel way. The $G$%
-C*-algebra $\left( D_{G},\delta _{G}\right) $ is obtained in the proof of
Theorem \ref{thm:ModelActionProfinite} as the direct limit of the direct
system $\left( \left( D_{G_{i}},\delta _{G_{i}}\right) ,\iota _{ij}\right)
_{i,j\in \mathcal{V}}$, where $\left( D_{G_{i}},\delta _{G_{i}}\right) $ is
the model action of the finite group $G_{i}$. It remains to observe now that
the direct system $\left( \left( D_{G_{i}},\delta _{G_{i}}\right) ,\iota
_{ij}\right) _{i,j\in \mathcal{V}}$ can be computed in a Borel fashion from $%
(G_{i},\pi _{i,j})_{i,j\in \mathcal{V}}$. This concludes the proof.
\end{proof}

\begin{corollary}
\label{Corollary:conjugacy} Under the hypotheses of Theorem \ref%
{Theorem:conjugacy2}, the relations of conjugacy, cocycle conjugacy, and
weak cocycle conjugacy of strongly outer actions of $\Lambda $ on $A$ are
complete analytic sets. Furthermore, there exists an amenable, extreme trace 
$\tau $ on $A$ such that the relations of conjugacy, cocycle conjugacy, weak
cocycle conjugacy, $\tau $-conjugacy, cocycle $\tau $-conjugacy, and weak
cocycle $\tau $-conjugacy of $\tau $-preserving strongly outer weakly $\tau $%
-mixing actions of $\Lambda $ on $A$ are complete analytic sets, and in
particular not Borel.
\end{corollary}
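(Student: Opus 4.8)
The plan is to derive the corollary formally from Theorem~\ref{Theorem:conjugacy2}, the completeness result for pro-$p$ groups, and the transfer principle for complete analytic sets recorded in Subsection~\ref{Sbs:borel-complexity}. Write $\cong_p$ for the relation of topological isomorphism of second-countable abelian pro-$p$ groups on $\mathcal{S}(\mathbb{\hat{Z}}_{p}^{\infty})$, which is a complete analytic set by the Proposition in Subsection~\ref{Sbs:groups}. Let $\tau$ be the trace produced by Theorem~\ref{Theorem:conjugacy2}, so that the Borel map $G\mapsto\alpha^{G}$ simultaneously reduces $\cong_p$ to each of the six equivalence relations (conjugacy, cocycle conjugacy, weak cocycle conjugacy, and their $\tau$-counterparts) on $\mathrm{SOWM}_{\Lambda}(A,\tau)$.

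The one point requiring verification is that each of the relations in question is \emph{analytic} as a subset of the relevant product space; granting this, the transfer principle of Subsection~\ref{Sbs:borel-complexity} immediately upgrades each of them to a complete analytic set, since $\cong_p$ is complete analytic and Borel-reduces into each. Analyticity is routine: every one of these relations is obtained by existentially quantifying a Borel witness condition over a standard Borel parameter space, so each is the projection of a Borel set. For instance, two actions $\alpha,\beta$ are conjugate exactly when there exists $\psi\in\mathrm{Aut}(A)$, a Polish group, satisfying the closed condition $\psi\circ\alpha_\gamma=\beta_\gamma\circ\psi$ for all $\gamma\in\Lambda$; cocycle and weak cocycle conjugacy add an existential quantifier over the Polish space $U(A)^\Lambda$ of candidate (weak) cocycles together with the Borel cocycle identities; and the $\tau$-versions are the analogous conditions formulated after the passage $\alpha\mapsto\overline{\alpha}^{\tau}$ to the weak extension, which is Borel in the parameters by \cite{farah_turbulence_2014,farah_descriptive_2012,gardella_conjugacy_2016}.

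For the first assertion, concerning conjugacy, cocycle conjugacy, and weak cocycle conjugacy on the larger space $\mathrm{SO}_{\Lambda}(A)$, I would observe that the reduction $G\mapsto\alpha^{G}$ takes values in $\mathrm{SOWM}_{\Lambda}(A,\tau)\subseteq\mathrm{SO}_{\Lambda}(A)$, and that by Theorem~\ref{Theorem:conjugacy} all equivalence notions between conjugacy and weak cocycle $\tau$-conjugacy collapse on this image to $\cong_p$; hence the same map reduces $\cong_p$ into each of these three relations regarded on $\mathrm{SO}_{\Lambda}(A)$, and these are analytic by the identical argument. Being complete analytic, none of the relations in either assertion is Borel, as complete analytic sets are never Borel.

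The only genuine work lies in the analyticity verification, and within it the main obstacle is the bookkeeping for the $\tau$-versions: one must confirm that the assignment $\alpha\mapsto\overline{\alpha}^{\tau}$ and the attendant spaces of unitaries and automorphisms of $\overline{A}^{\tau}$ can be parametrized so that the existential witnesses range over standard Borel spaces. Once this Borel-definability is granted from the cited descriptive-set-theoretic work, the corollary is a formal consequence of Theorem~\ref{Theorem:conjugacy2}, Theorem~\ref{Theorem:conjugacy}, and the completeness of $\cong_p$.
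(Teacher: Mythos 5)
Your proposal is correct and follows essentially the same route as the paper, which treats the corollary as an immediate consequence of Theorem~\ref{Theorem:conjugacy2}, the completeness of isomorphism of second-countable abelian pro-$p$ groups (the Proposition in Subsection~\ref{Sbs:groups}), and the transfer principle stated in Subsection~\ref{Sbs:borel-complexity}. Your explicit verification that the six relations are themselves analytic (projections of Borel sets over $\mathrm{Aut}(A)$, $U(A)^{\Lambda}$, and their von Neumann counterparts for the $\tau$-versions) is a worthwhile addition, since the paper's definition of complete analytic set requires analyticity and its transfer-principle remark leaves this point implicit.
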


As mentioned after Theorem~\ref{Theorem:uncountably}, it is easy to
construct algebras $A$ satisfied the hypotheses of Corollary~\ref%
{Corollary:conjugacy}. Indeed, if $A_{0}$ is any separable, unital, exact
C*-algebra with an amenable trace, we may take $A=M_{p}^{\infty }\otimes
A_{0}^{\otimes \mathbb{N}}$.


As we did after Theorem~\ref{Theorem:uncountably}, we state the case of
UHF-algebras separately, to highlight the contrast with the main results of 
\cite{kishimoto_rohlin_1995,matui_actions_2011,szabo_strongly_2017}.

\begin{corollary}
\label{cor:UHFanalytic} Let $D$ be a UHF-algebra of infinite type, and let $%
\Lambda$ be a countable group with an infinite subgroup with relative
property \emph{(T)}. Then the relations of conjugacy, cocycle conjugacy, and
weak cocycle conjugacy of strongly outer actions of $\Lambda $ on $D$ are
complete analytic sets, and in particular not Borel. The same applies to the
relations of being conjugate, cocycle conjugate, and weakly cocycle
conjugate in the weak closure with respect to the (necessarily $\Lambda$%
-invariant) unique trace on $D$.
\end{corollary}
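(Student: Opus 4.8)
The plan is to observe that the statement is a direct specialization of Corollary~\ref{Corollary:conjugacy} (equivalently, of Theorem~\ref{Theorem:conjugacy2}), so that the only real task is to verify that an arbitrary UHF-algebra $D$ of infinite type meets the standing hypotheses on $A$, and then to translate the abstract $\tau$-conjugacy statements into statements about the weak closure. First I would check the hypotheses one by one. A UHF-algebra is separable and unital by definition, and it is nuclear, hence exact, hence locally reflexive (Corollary~9.4.1 of \cite{brown_c*-algebras_2008} as cited before Lemma~\ref{lemma:WeakClosureR}). It carries a unique trace $\tau$; since $D$ is nuclear this trace is amenable, and since it is the only trace it is automatically extreme. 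As $D$ is infinite dimensional, Lemma~\ref{lemma:WeakClosureR} then gives $\overline{D}^{\tau}\cong R$, confirming both that $\tau$ is the extreme amenable trace produced by Theorem~\ref{Theorem:conjugacy2} and that its weak closure is the hyperfinite $\mathrm{II}_1$ factor.

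The two remaining hypotheses are $M_{p^{\infty}}$-absorption for some prime $p$ and the self-tensor identity $D\cong D^{\otimes\mathbb{N}}$; both I would read off from Glimm's classification via the associated supernatural number of Definition~\ref{Definition:supernatural}. Writing $\boldsymbol{n}_D$ for the supernatural number of $D$, infinite type means $\boldsymbol{n}_D(q)\in\{0,\infty\}$ for every prime $q$, with nonempty support since $D$ is infinite dimensional. Choosing any $p$ in the support, the supernatural number of $D\otimes M_{p^{\infty}}$ agrees with $\boldsymbol{n}_D$ (since $\infty+\infty=\infty$ and $0+\infty$ occurs at no support prime because we only add at $p$, where the value is already $\infty$), so $D\otimes M_{p^{\infty}}\cong D$. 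Likewise, the supernatural number of $D^{\otimes\mathbb{N}}$ sends each $q$ to a countable sum of copies of $\boldsymbol{n}_D(q)\in\{0,\infty\}$, which is again $\boldsymbol{n}_D(q)$; hence $D^{\otimes\mathbb{N}}\cong D$. This closes the verification that $D$ satisfies every assumption of Corollary~\ref{Corollary:conjugacy}.

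With the hypotheses in place, the first sentence of Corollary~\ref{Corollary:conjugacy} immediately yields that conjugacy, cocycle conjugacy, and weak cocycle conjugacy of strongly outer actions of $\Lambda$ on $D$ are complete analytic, hence not Borel. For the weak-closure assertions I would invoke parts (4)--(6) of the same corollary for the unique trace $\tau$: by Definition~\ref{Definition:tau-conjugacy}, $\tau$-conjugacy, cocycle $\tau$-conjugacy, and weak cocycle $\tau$-conjugacy are \emph{by definition} conjugacy, cocycle conjugacy, and weak cocycle conjugacy of the weak extensions $\overline{\alpha}^{\tau}$ inside $\overline{D}^{\tau}\cong R$, so these are precisely the relations ``in the weak closure'' of the statement. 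Finally, uniqueness of the trace forces $\tau$ to be $\Lambda$-invariant for every action, which justifies the parenthetical remark and removes any invariance hypothesis. I do not expect a genuine obstacle here: the entire argument is bookkeeping on top of Corollary~\ref{Corollary:conjugacy}, and the only mildly delicate point is to keep straight that the Borel reduction $G\mapsto\alpha^{G}$ of Theorem~\ref{Theorem:conjugacy2} lands in the space of $\tau$-preserving, strongly outer, weakly $\tau$-mixing actions, so that the complete-analyticity transfers simultaneously to the norm-level and weak-closure relations.
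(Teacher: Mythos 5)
Your proposal is correct and follows exactly the route the paper intends: Corollary~\ref{cor:UHFanalytic} is stated in the paper without a separate proof, as an immediate specialization of Corollary~\ref{Corollary:conjugacy} (via Theorem~\ref{Theorem:conjugacy2}) to $A=D$. Your verification of the hypotheses (local reflexivity via nuclearity, uniqueness of the trace giving extremeness, amenability and $\Lambda$-invariance, and the supernatural-number computations for $M_{p^{\infty}}$-absorption and $D\cong D^{\otimes\mathbb{N}}$), together with the translation of the $\tau$-conjugacy relations into the weak-closure relations via Definition~\ref{Definition:tau-conjugacy}, supplies precisely the bookkeeping the paper leaves implicit.
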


In fact, the same conclusions hold for any finite, strongly self-absorbing
C*-algebra containing a nontrivial projection; see \cite[Definition 1.3]%
{toms_strongly_2007} and \cite[Theorem 1.7]{toms_strongly_2007}.

\section{Actions on \texorpdfstring{$R$}{R} with prescribed cohomology}

In this final section, we explain how the methods from Section~\ref%
{Section:conjugacy} can be used to prove Theorem~C in the introduction. In
fact, we prove a somewhat more general statement; see Theorem~\ref%
{thm:PrescrCohom}.

We follow a strategy similar to the one used in Theorem~\ref{thm:ComputCohom}%
, and for that we will need the following replacement of the action
constructed in Theorem~\ref{thm:ModelActionProfinite}. Later on, we will be
interested only in the weak extension of the action constructed below.

Recall that $\mathtt{Lt}\colon G\to \mathrm{Aut}(C(G))$ denotes the action
of left translation. Similarly, we denote by $\mathtt{Rt}$ the action of $G$
on $C(G)$ by right translation.

\begin{proposition}
\label{prop:ModelActCpct} Let $G$ be a second-countable, compact Hausdorff
group. Then there exist a unital C*-algebra $A_{G}$ and an action $\theta
^{G}\colon G\rightarrow \mathrm{Aut}(A_{G})$ with the following properties:

\begin{enumerate}
\item $A_G$ is simple, separable, nuclear, and has a unique trace.

\item There exists an equivariant unital embedding $(C(G), \mathtt{Lt}^G)\to
(A_G, \theta^G)$.

\item $(A_G^{\otimes\mathbb{N}},(\theta^G)^{\otimes\mathbb{N}})$ is
equivariantly isomorphic to $(A_G, \theta^G)$.

\item $\theta^G$ has the Rokhlin property.

\item The fixed point algebra $A_{G}^{\theta ^{G}}$ is a simple, separable,
nuclear, unital C*-algebra with a unique trace.
\end{enumerate}
\end{proposition}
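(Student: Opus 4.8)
The plan is to mirror the architecture of Theorem~\ref{thm:ModelActionProfinite}, but to confront the fact that for a general compact $G$ the algebra $C(G)$ is \emph{not} AF (it is AF only for totally disconnected $G$), so the inductive-limit-over-finite-quotients recipe is unavailable. Instead I would isolate a single \emph{building block}: a unital $G$-C*-algebra $(B,\sigma)$ that already carries a unital equivariant embedding $(C(G),\mathtt{Lt})\hookrightarrow(B,\sigma)$ and is simple, separable, nuclear and monotracial, and then set $A_G=B^{\otimes\mathbb{N}}$ and $\theta^G=\sigma^{\otimes\mathbb{N}}$. The self-absorption in~(3) is then free of charge, since the reindexing bijection $\mathbb{N}\times\mathbb{N}\cong\mathbb{N}$ gives an equivariant isomorphism $(A_G^{\otimes\mathbb{N}},(\theta^G)^{\otimes\mathbb{N}})\cong(A_G,\theta^G)$, and~(2) holds by embedding $C(G)$ into the first tensor factor.

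For the building block I would use a crossed product by a countable dense subgroup. Choose a countable dense subgroup $\Gamma\leq G$ (which exists since $G$ is second countable, hence separable), let $\Gamma$ act on $C(G)$ by \emph{right} translation $\mathtt{Rt}$, and put $B=C(G)\rtimes_{\mathtt{Rt},r}\Gamma$. Because left and right translations commute, the action $\mathtt{Lt}^G$ of $G$ extends to an action $\sigma\colon G\to\mathrm{Aut}(B)$ fixing the canonical unitaries implementing $\Gamma$ and restricting to $\mathtt{Lt}$ on $C(G)$; this gives the desired unital equivariant embedding $(C(G),\mathtt{Lt})\hookrightarrow(B,\sigma)$. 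The translation action $\Gamma\curvearrowright G$ is free, and minimal because $\Gamma$ is dense, so $B$ is simple; it is separable, and it is monotracial because the action is uniquely ergodic with Haar measure as its only invariant probability measure (any invariant measure is $\overline{\Gamma}=G$-invariant). When the action is amenable, $B$ is also nuclear. In the case relevant to the application---$G$ compact \emph{abelian}, namely the Pontryagin dual of the countable abelian group $\Gamma_0$ of Theorem~C---one may take $\Gamma$ abelian (hence amenable), and all of these hypotheses hold simultaneously.

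With $A_G=B^{\otimes\mathbb{N}}$ and $\theta^G=\sigma^{\otimes\mathbb{N}}$, property~(1) follows from the fact that simplicity, nuclearity, separability and unique traciality all pass to finite tensor powers of $B$ and to the inductive limit along unital injective connecting maps. For the Rokhlin property~(4) I would verify Definition~\ref{Definition:Rokhlin} directly using \emph{shifted} copies of $C(G)$: given $\varepsilon>0$ and finite sets $S\subseteq C(G)$, $F\subseteq A_G$, approximate $F$ inside $B^{\otimes\{1,\dots,N\}}$ and let $\psi\colon C(G)\to A_G$ be the equivariant unital embedding into the $(N{+}1)$-st factor. Then $\psi$ is a unital $*$-homomorphism (so the multiplicativity estimate is exact), it is equivariant by construction (so the intertwining estimate is exact), and its range is approximately central against $F$; this yields the Rokhlin property. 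Finally, property~(5) should follow from the structural theory of Rokhlin actions of compact groups: since $\theta^G$ has the Rokhlin property and $A_G$ is simple, separable, nuclear and monotracial, the fixed point algebra $A_G^{\theta^G}$ inherits these properties, as in \cite{gardella_crossed_2014} (cf. the use of this circle of ideas in the proof of Theorem~\ref{thm:ModelActionProfinite}(4)).

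The hard part will be the general (non-abelian) compact case, precisely at nuclearity and unique traciality of the building block: a compact group such as $SO(3)$ has no countable dense \emph{amenable} subgroup, so the translation action $\Gamma\curvearrowright G$ need not be amenable and $C(G)\rtimes_{r}\Gamma$ need not be nuclear, while dropping density destroys minimality and hence simplicity. For such $G$ I would instead invoke the existence of Rokhlin actions of second countable compact groups on suitable simple nuclear monotracial self-absorbing C*-algebras from \cite{gardella_compact_2015,gardella_rokhlin_2014}, arranging the equivariant embedding of $(C(G),\mathtt{Lt})$ from the Rokhlin property itself, and then run the same tensor-power and fixed-point-algebra analysis. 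Since the only use made of this proposition is through the weak extension of $\theta^G$ for $G$ the dual of a countable abelian group, the abelian construction above already suffices for Theorem~C, and I would present it as the main case.
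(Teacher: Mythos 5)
Your crossed-product building block is a genuinely different construction from the paper's, and for compact \emph{abelian} $G$ it is correct: a countable dense subgroup $\Gamma\leq G$ is automatically abelian, hence amenable, so $B=C(G)\rtimes_{\mathtt{Rt},r}\Gamma$ is separable, unital, nuclear, simple (the translation action is free and, by density, minimal), and monotracial (any $\Gamma$-invariant measure on $G$ is $\overline{\Gamma}=G$-invariant by norm-continuity of translation, hence Haar; freeness of the action is then needed to see that \emph{every} trace on the reduced crossed product factors through the canonical conditional expectation---a point you should make explicit, since unique ergodicity alone does not control traces that do not vanish off the diagonal). Left translation commutes with right translation, so it extends to $B$ fixing the canonical unitaries, and passing to $B^{\otimes\mathbb{N}}$ yields (2)--(4) by the same reindexing and far-tensor-factor argument the paper uses, with (5) from \cite{gardella_crossed_2014}. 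Since Theorem~\ref{thm:PrescrCohom} only invokes Proposition~\ref{prop:ModelActCpct} for $G$ the Pontryagin dual of a countable abelian group, your construction does suffice for the paper's application.

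However, as a proof of the proposition \emph{as stated}---for an arbitrary second-countable compact $G$---there is a genuine gap, which you yourself identify but do not close. When $G$ admits no dense countable amenable subgroup (e.g.\ $G=SO(3)$, whose dense subgroups contain free groups), the crossed product is not nuclear: a probability-measure-preserving action of a nonamenable group is never amenable. Your fallback, citing the ``existence of Rokhlin actions on suitable simple nuclear monotracial self-absorbing C*-algebras'' from \cite{gardella_compact_2015,gardella_rokhlin_2014}, is not a proof but essentially a restatement of the proposition: the simultaneous existence of simplicity, nuclearity, a unique trace, and an equivariant unital copy of $(C(G),\mathtt{Lt})$ is exactly what must be constructed. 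The paper's proof closes this gap by never forming a crossed product at all: it takes the equivariant Goodearl-type inductive limit of $A_n=M_{2^n}\otimes C(G)$ along the connecting maps $a\mapsto\mathrm{diag}\bigl(a,(\mathrm{id}\otimes\mathtt{Rt}_{x_n})(a)\bigr)$, where $\{x_n\colon n\geq m\}$ is dense in $G$ for every $m$. Since right translation commutes with left translation, these maps are equivariant for $\mathrm{id}\otimes\mathtt{Lt}$; nuclearity is automatic because the building blocks are type I; and the dense translations enter only through the connecting maps, where they force any compatible sequence of measures (resp.\ ideals) to be Haar (resp.\ trivial) in the limit, giving unique trace and simplicity. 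This decoupling of nuclearity from the dynamics is precisely what removes the amenability obstruction and makes the argument work for every second-countable compact $G$; it is the key idea missing from your proposal.
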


\begin{proof}
Fix a subset $\{x_n\colon n\in\mathbb{N}\}$ of $G$ such that $\{x_n\colon
n\geq m\}$ is dense in $G$ for all $m\in\mathbb{N}$. For $n\in\mathbb{N}$,
set $A_n=M_{2^n}\otimes C(G)$ with the $G$-action $\alpha^{(n)}=\mathrm{id}%
_{M_{2^n}}\otimes\mathtt{Lt}$. Let $\rho^{(n)}$ be the $G$-action on $A_n$
given by $\rho^{(n)}=\mathrm{id}_{M_{2^n}}\otimes\mathtt{Rt}$, and define a
unital equivariant homomorphism $\varphi_n\colon A_n\to A_{n+1}$ by $%
\varphi_n(a)= \left( 
\begin{array}{cc}
a & 0 \\ 
0 & \rho_{x_n}(a) \\ 
& 
\end{array}
\right)$ for all $a\in A_n$. Define $\widetilde{A}_G$ and $\widetilde{\theta}%
^G$ to be the direct limits of $(A_n)_{n\in\mathbb{N}}$ and $%
(\alpha^{(n)})_{n\in\mathbb{N}}$ with respect to the connecting maps $%
(\varphi_n)_{n\in\mathbb{N}}$.

Observe that $\widetilde{A}_G$ is separable, unital and nuclear. We claim
that it has a unique trace and that it is simple. Both fact are proved using
similar arguments, so we only show uniqueness of the trace. Denote by $\tau$
the trace on $C(G)$ given by integration against the normalized Haar measure 
$\mu$ on $G$, and by $\mathrm{tr}_n$ the normalized trace on $M_{2^n}$. Then 
$\tau_n=\mathrm{tr}_n\otimes\tau$ is a normalized trace on $A_n$ and $%
\tau_{n+1}\circ\varphi_n=\tau_n$ for all $n\in\mathbb{N}$. It follows that
there is a direct limit trace on $\widetilde{A}_G$. Now let $\sigma$ be
another trace on $\widetilde{A}_G$. Then there exist $n_0\in\mathbb{N}$ and
traces $\sigma_n\in T(A_n)$, for $n\geq n_0$, satisfying $%
\sigma_{n+1}\circ\varphi_n=\sigma_n$ for all $n\geq n_0$ and $%
\tau(a)=\tau_n(a)$ for all $a\in A_n$, for $n\geq n_0$. For $n\geq n_0$, let 
$\nu_n$ be a probability measure on $C(G)$ such that, with $\widehat{\nu}_n$
denoting its associated functional on $C(G)$, we have $\sigma_n=\mathrm{tr}%
_n\otimes\widehat{\nu}_n$. The identity $\sigma_{n+1}\circ\varphi_n=\sigma_n$
amounts to $\nu_n(E)=\frac{1}{2}\left(\nu_{n+1}(x_nE)+\nu_{n+1}(E)\right)$
for every measurable subset $E\subseteq G$. Using the identity $%
\sigma_{n+k}\circ\varphi_{n+k-1}\circ\cdots\circ\varphi_n=\sigma_n$, valid
for all $k\geq 1$, an using that $\{x_k\colon k\geq n\}$ is dense in $G$,
one concludes that $\nu_n$ is translation invariant, and hence we must have $%
\nu_n=\mu$ for all $n\in\mathbb{N}$. In particular, it follows that $%
\sigma=\tau$, as desired.

The proof of simplicity is analogous, using open subsets of $G$ which are
translation invariant. We omit the details.

Now set $A_G=\otimes_{k\in\mathbb{N}} \widetilde{A}_G$ and $%
\alpha^G=\otimes_{k\in\mathbb{N}} \widetilde{\alpha}^G$. Then $A_G$ is
simple, separable, unital, nuclear, and has a unique trace, which proves
(1). Observe that there are equivariant unital embeddings 
\begin{equation*}
C(G)\hookrightarrow A_1 \hookrightarrow \widetilde{A}_G\hookrightarrow A_G,
\end{equation*}
so part~(2) is satisfied. Also, (3) holds by construction, while (4) follows
from (2) and (3). Finally, the Rokhlin property for $\theta^G$ ensures that
the properties for $A_G$ listed in~(1) are inherited by $A_G^{\theta^G}$, by
the theorem in the introduction of~\cite{gardella_crossed_2014}. This gives
(5), and finishes the proof.
\end{proof}

Observe that $A_{G}$ is never a UHF-algebra (unless $G$ is the trivial
group). In particular, even when $G$ is a profinite group, the C*-dynamical
system $(A_{G},\theta ^{G})$ constructed in Proposition~\ref%
{prop:ModelActCpct} is not the same as that constructed in Theorem~\ref%
{thm:ModelActionProfinite}.

\begin{remark}
Unlike in Theorem~\ref{thm:ModelActionProfinite}, the action constructed in
the proposition above does not enjoy any reasonable uniqueness-type property
among Rokhlin actions of $G$.
\end{remark}

We now come to the main result of this section, which in particular implies
Theorem~C in the introduction.

\begin{theorem}
\label{thm:PrescrCohom} Let $\Lambda$ be a countable group containing an
infinite subgroup $\Delta$ with relative property (T), and let $\Gamma$ be
any countable abelian group. Then there exist an outer action $%
\alpha^{\Gamma}\colon \Lambda\to \mathrm{Aut} (R)$ and bijections $%
\eta\colon H^1_{\Delta,w}(\alpha^{\Gamma})\to \Gamma$ and $\eta^{(2)}\colon
H^1_{\Delta,w}(\alpha^{\Gamma}\otimes\alpha^{\Gamma})\to \Gamma$ making the
following diagram commute: 
\begin{align*}
\xymatrix{ H^1_{\Delta,w}(\alpha^{\Gamma})\times
H^1_{\Delta,w}(\alpha^{\Gamma})
\ar[d]_-{m^{\alpha^{\Gamma}}}\ar[rr]^-{\eta\times \eta} && \Gamma\times
\Gamma \ar[d]^-{m^{\Gamma}}\\
H^1_{\Delta,w}(\alpha^{\Gamma}\otimes\alpha^{\Gamma})\ar[rr]_-{{\eta^{(2)}}}
&& \Gamma }
\end{align*}
\end{theorem}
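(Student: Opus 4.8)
The plan is to repeat the construction and cohomology computation of Lemma~\ref{Lemma:key} (equivalently Theorem~\ref{thm:ComputCohom}), but with the profinite model action $\delta^{G}$ replaced by the model action $\theta^{G}$ produced in Proposition~\ref{prop:ModelActCpct}, which is available for \emph{every} second-countable compact group. Set $G=\widehat{\Gamma}$, the Pontryagin dual of $\Gamma$; since $\Gamma$ is a countable discrete abelian group, $G$ is a second-countable compact abelian group with $\widehat{G}\cong\Gamma$ canonically. Let $(A_{G},\theta^{G})$ be as in Proposition~\ref{prop:ModelActCpct}, with unique trace $\tau$. Since $A_{G}$ is simple, separable, nuclear and infinite dimensional, its unique trace is extreme and amenable and $A_{G}$ is locally reflexive, so Lemma~\ref{lemma:WeakClosureR} gives $M:=\overline{A_{G}}^{\tau}\cong R$; write $\overline{\theta^{G}}$ for the weak extension of $\theta^{G}$ to $M$. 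Because $G$ is compact, $M^{\overline{\theta^{G}}}=\overline{A_{G}^{\theta^{G}}}^{\tau}$, and this is again isomorphic to $R$ by property~(5) of Proposition~\ref{prop:ModelActCpct} together with Lemma~\ref{lemma:WeakClosureR}.

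First I would transcribe the construction of Lemma~\ref{Lemma:key} with $(M,\overline{\theta^{G}})$ in the role of $(\overline{D_{G}},\overline{\delta^{G}})$: put $N=(M\otimes R)^{\overline{\otimes}\Lambda}$, let $\rho=(\overline{\theta^{G}}\otimes\mathrm{id}_{R})^{\otimes\Lambda}$ be the $G$-action on $N$, set $B=N^{\rho}$, let $\beta$ be the Bernoulli $(\Lambda\curvearrowright\Lambda)$-action with base $M\otimes R$, and define $\alpha^{\Gamma}$ to be the restriction of $\beta$ to $B$. The proof of Lemma~\ref{Lemma:key} then goes through unchanged, since it uses the model only through the following features, all supplied by Proposition~\ref{prop:ModelActCpct} for our $G$: an equivariant unital embedding $(C(G),\mathtt{Lt})\hookrightarrow(A_{G},\theta^{G})$, hence into $(M,\overline{\theta^{G}})$ (used to produce, for each character $\omega\in\widehat{G}$, an eigenvector $v$ with $\rho_{g}(v)=\omega(g)v$, and therefore a weak $1$-cocycle $u_{\gamma}=v^{\ast}\beta_{\gamma}(v)$ realizing $\omega$, giving surjectivity of $\eta$); the isomorphism $M^{\overline{\theta^{G}}}\cong R$; Popa's superrigidity theorem \cite[Theorem~4.1]{popa_some_2006} applied to $\beta$ with tracial base $M\otimes R\cong R$, valid since $\Delta\subseteq\Lambda$ has relative property~(T); and mixing of $\beta|_{\Delta}$, which holds by Proposition~\ref{Proposition:bernoulli-wm} because the stabilizers of $\Delta\curvearrowright\Lambda$ are trivial, together with the eigenvalue characterization of Remark~\ref{rmk:CharMixing}. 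This yields bijections $\eta\colon H^{1}_{\Delta,w}(\alpha^{\Gamma})\to\widehat{G}$ and $\eta^{(2)}\colon H^{1}_{\Delta,w}(\alpha^{\Gamma}\otimes\alpha^{\Gamma})\to\widehat{G}$ with $\eta^{(2)}\circ m^{\alpha^{\Gamma}}_{\Delta}=m^{\widehat{G}}\circ(\eta\times\eta)$, where $m^{\widehat{G}}$ is pointwise multiplication of characters. Composing with the canonical identification $\widehat{G}\cong\Gamma$ (under which multiplication of characters becomes the group operation of $\Gamma$) produces the required commuting diagram.

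It remains to check that $\alpha^{\Gamma}$ is outer, which I would do exactly as in part~(4) of Proposition~\ref{prop:AlternativeDescrAlpha}. Rearranging tensor factors, $N=M^{\overline{\otimes}\Lambda}\,\overline{\otimes}\,R^{\overline{\otimes}\Lambda}$ and $\rho=(\overline{\theta^{G}})^{\otimes\Lambda}\otimes\mathrm{id}$, so, using compactness of $G$ once more, $B=(M^{\overline{\theta^{G}}})^{\overline{\otimes}\Lambda}\,\overline{\otimes}\,R^{\overline{\otimes}\Lambda}$ and $\alpha^{\Gamma}$ is conjugate to $\beta_{\Lambda\curvearrowright\Lambda,M^{\overline{\theta^{G}}}}\otimes\beta_{\Lambda\curvearrowright\Lambda,R}$. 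Since $R$ is diffuse and $\Lambda\curvearrowright\Lambda$ has trivial stabilizers, the second factor is a free von Neumann Bernoulli shift, so each $(\beta_{\Lambda\curvearrowright\Lambda,R})_{\gamma}$ with $\gamma\neq1$ is properly outer; by Kallman's theorem \cite[Corollary~1.12]{kallman_generalization_1969} the tensor-product automorphism $\alpha^{\Gamma}_{\gamma}$ is properly outer, and since $B\cong R$ is a factor it is outer by Remark~\ref{Remark:center}. Taking $\alpha^{\Gamma}$ on $B\cong R$ completes the proof.

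The main (and essentially only) obstacle is the absence of a UHF model action when $G$ is not profinite---for instance $\Gamma=\mathbb{Z}$ forces $G=\mathbb{T}$, which is connected---so Theorem~\ref{thm:ModelActionProfinite} cannot be invoked. This is exactly what Proposition~\ref{prop:ModelActCpct} is designed to circumvent: it trades the UHF-algebra $D_{G}$ for a non-UHF simple nuclear C*-algebra $A_{G}$ while retaining every property (equivariant embedding of $(C(G),\mathtt{Lt})$, self-absorption, the Rokhlin property, and a well-behaved fixed-point algebra) that the cohomology computation requires. Once that model is in hand, the rest is a faithful transcription of the arguments already carried out in Section~\ref{Sct:uncountably}.
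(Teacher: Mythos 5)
Your proposal is correct and follows essentially the same route as the paper: set $G=\widehat{\Gamma}$, pass to the weak closure of the model action from Proposition~\ref{prop:ModelActCpct}, restrict the Bernoulli shift to the fixed-point algebra of the diagonal $G$-action, and rerun the proof of Lemma~\ref{Lemma:key} verbatim (the paper takes Bernoulli base $R=\overline{A_G}^{\tau}$ rather than your $M\otimes R$, which is immaterial). One step of your outerness verification is misstated, though the conclusion survives: the fixed-point algebra of $(\overline{\theta^{G}})^{\otimes\Lambda}\otimes\mathrm{id}$ on $M^{\overline{\otimes}\Lambda}\,\overline{\otimes}\,R^{\overline{\otimes}\Lambda}$ is $\bigl(M^{\overline{\otimes}\Lambda}\bigr)^{(\overline{\theta^{G}})^{\otimes\Lambda}}\,\overline{\otimes}\,R^{\overline{\otimes}\Lambda}$, \emph{not} $\bigl(M^{\overline{\theta^{G}}}\bigr)^{\overline{\otimes}\Lambda}\,\overline{\otimes}\,R^{\overline{\otimes}\Lambda}$; fixed points of a diagonal compact-group action are in general strictly larger than the tensor product of the fixed-point algebras (that identification is only valid when the action on one tensor factor is trivial, which is exactly how parts~(2) and~(4) of Proposition~\ref{prop:AlternativeDescrAlpha} are arranged), and accordingly the first factor of $\alpha^{\Gamma}$ is the restriction of $\beta_{\Lambda\curvearrowright\Lambda,M}$ to that diagonal fixed-point algebra rather than a Bernoulli shift with base $M^{\overline{\theta^{G}}}$. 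This does not affect your argument: Kallman's theorem only requires that $\alpha^{\Gamma}_{\gamma}$ split as $\sigma_{\gamma}\otimes(\beta_{\Lambda\curvearrowright\Lambda,R})_{\gamma}$ with respect to the corrected decomposition of $B$, and the second factor, on which $\rho$ acts trivially, is the free von Neumann Bernoulli shift, so proper outerness follows, and outerness since $B$ is a factor. Incidentally, the paper's own proof is silent on outerness, so carrying out this check (in its corrected form) is a worthwhile addition.
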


\begin{proof}
Let $G$ denote the Pontryagin dual of $\Gamma $, which is a
second-countable, compact Hausdorff group. Let $\theta ^{G}\colon
G\rightarrow \mathrm{Aut}(A_{G})$ denote the action constructed in
Proposition~\ref{prop:ModelActCpct}, and denote by $\overline{\theta }%
^{G}\colon G\rightarrow \mathrm{Aut}(R)$ its weak extension in the GNS
representation associated to the unique (and hence $\theta ^{G}$-invariant)
trace of $A_{G}$. (The fact that the weak closure of $A_{G}$ is $R$ follows
from Lemma~\ref{lemma:WeakClosureR} and part~(1) of Proposition~\ref%
{prop:ModelActCpct}.) Abbreviate $R^{\otimes \Lambda }$ to $N$, and
abbreviate $(\overline{\theta }^{G})^{\otimes \Lambda }$ to $\rho \colon
G\rightarrow \mathrm{Aut}(N)$. Denote by $\beta \colon \Lambda \rightarrow 
\mathrm{Aut}(N)$ the Bernoulli shift $\beta _{\Lambda \curvearrowright
\Lambda ,R}$ of $\Lambda $ on $R^{\otimes \Lambda }=N$. Then $\beta $
commutes with $\rho $, and hence induces an action $\alpha ^{\Gamma }$ of $%
\Lambda $ on the fixed point algebra $N^{\rho }$ of $\rho $. Since $(\theta
^{G})^{\otimes \Lambda }$ is conjugate to $\theta ^{G}$ by part~(3) of
Proposition~\ref{prop:ModelActCpct}, it follows that $N$, which is the weak
closure of the fixed point algebra of $(\theta ^{G})^{\otimes \Lambda }$, is
isomorphic to the weak closure of $A_{G}^{\theta ^{G}}$. Hence $N^{\rho }$
is isomorphic to $R$ by part~(5) of Proposition~\ref{prop:ModelActCpct} and
Lemma~\ref{lemma:WeakClosureR}. Under this identification, we regard $\alpha
^{\Gamma }$ as an action of $\Lambda $ on $R$. Finally, the same proof as
Lemma \ref{Lemma:key} gives the desired conclusion concerning the $\Delta $%
-relative weak cohomology group of $\alpha ^{\Gamma }$.
\end{proof}

We close this work by pointing out that the argument used in Theorem~\ref%
{Theorem:uncountably} can be used in this context to give an alternative
proof of Theorem~B in \cite{brothier_families_2015} for the case of groups
containing a subgroup with the relative property (T). Namely, it follows
from \autoref{thm:PrescrCohom} that for $\Lambda$ as in its assumptions,
there exist uncountably many weakly non-cocycle conjugate outer actions of $%
\Lambda$ on $R$.

\providecommand{\MR}[1]{}
\providecommand{\bysame}{\leavevmode\hbox to3em{\hrulefill}\thinspace}
\providecommand{\MR}{\relax\ifhmode\unskip\space\fi MR }
\providecommand{\MRhref}[2]{%
  \href{http://www.ams.org/mathscinet-getitem?mr=#1}{#2}
}
\providecommand{\href}[2]{#2}


\begin{thebibliography}{10}

\bibitem{becker_descriptive_1996}
Howard Becker and Alexander~S. Kechris, \emph{The descriptive set theory of
  {P}olish group actions}, London {Mathematical} {Society} {Lecture} {Note}
  {Series}, vol. 232, Cambridge University Press, Cambridge, 1996.

\bibitem{blackadar_operator_2006}
Bruce Blackadar, \emph{Operator {algebras}}, Encyclopaedia of {Mathematical}
  {Sciences}, vol. 122, Springer-Verlag, Berlin, 2006.

\bibitem{bratteli_rohlin_1995}
O.~Bratteli, D.~E. Evans, and A.~Kishimoto, \emph{The {Rohlin} {Property} {For}
  {Quasi}-{Free} {Automorphisms} of the {Fermion} {Algebra}}, Proceedings of
  the London Mathematical Society \textbf{s3-71} (1995), no.~3, 675--694 (en).

\bibitem{brothier_families_2015}
Arnaud Brothier and Stefaan Vaes, \emph{Families of hyperfinite subfactors with
  the same standard invariant and prescribed fundamental group}, Journal of
  Noncommutative Geometry \textbf{9} (2015), no.~3, 775--796.

\bibitem{brown_invariant_2006}
Nathanial~P. Brown, \emph{Invariant means and finite representation theory of
  {C}*-algebras}, Mem. Amer. Math. Soc. \textbf{184} (2006), no.~865, viii+105.
  \MR{2263412}

\bibitem{brown_c*-algebras_2008}
Nathanial~P. Brown and Narutaka Ozawa, \emph{C*-algebras and finite-dimensional
  approximations}, Graduate {Studies} in {Mathematics}, vol.~88, American
  Mathematical Society, Providence, RI, 2008.

\bibitem{burger_kazhdan_1991}
Marc Burger, \emph{Kazhdan constants for {$\mathrm{SL}(3,\mathbb{Z})$}},
  Journal f{\"{u}}r die reine und angewandte Mathematik \textbf{413} (1991),
  36--67. \MR{1089795}

\bibitem{connes_outer_1975}
Alain Connes, \emph{Outer conjugacy classes of automorphisms of factors},
  Annales Scientifiques de l'{\'{E}}cole Normale Sup{\'{e}}rieure.
  Quatri{\`{e}}me S{\'{e}}rie \textbf{8} (1975), no.~3, 383--419. \MR{0394228}

\bibitem{connes_periodic_1977}
\bysame, \emph{Periodic automorphisms of the hyperfinite factor of type
  {II$_1$}}, Acta Universitatis Szegediensis. Acta Scientiarum Mathematicarum
  \textbf{39} (1977), no.~1-2, 39--66. \MR{0448101}

\bibitem{elliott_isomorphism_2013}
George~A. Elliott, Ilijas Farah, Vern~I. Paulsen, Christian Rosendal, Andrew~S.
  Toms, and Asger Törnquist, \emph{The isomorphism relation for separable
  {C}*-algebras}, Mathematical Research Letters \textbf{20} (2013), no.~6,
  1071--1080.

\bibitem{epstein_borel_2011}
Inessa Epstein and Asger T{\"{o}}rnquist, \emph{The {Borel} complexity of von
  {Neumann} equivalence}, arXiv:1109.2351 (2011).

\bibitem{farah_descriptive_2012}
Ilijas Farah, Andrew~S. Toms, and Asger T{\"o}rnquist, \emph{The descriptive
  set theory of {C}*-algebra invariants}, International Mathematics Research
  Notices (2012), 5196--5226.

\bibitem{farah_turbulence_2014}
\bysame, \emph{Turbulence, orbit equivalence, and the classification of nuclear
  {C}*-algebras}, Journal f{\"u}r die reine und angewandte Mathematik
  \textbf{688} (2014), 101--146.

\bibitem{friedman_borel_1989}
Harvey Friedman and Lee Stanley, \emph{A {Borel} reducibility theory for
  classes of countable structures}, Journal of Symbolic Logic \textbf{54}
  (1989), no.~3, 894--914.

\bibitem{gao_invariant_2009}
Su~Gao, \emph{Invariant {descriptive} {set} {theory}}, Pure and {Applied}
  {Mathematics}, vol. 293, CRC Press, Boca Raton, FL, 2009.

\bibitem{gardella_crossed_2014}
Eusebio Gardella, \emph{Crossed products by compact group actions with the
  {Rokhlin} property}, Journal of Noncommutative Geometry, in press.

\bibitem{gardella_rokhlin_2014}
\bysame, \emph{Rokhlin dimension for compact group actions}, Indiana Journal of
  Mathematics, \textbf{66} (2017), 659--703.

\bibitem{gardella_compact_2015}
\bysame, \emph{Compact group actions on {C}*-algebras: classification,
  non-classifiability, and crossed products; and rigidity results for
  ${L}^p$-operator algebras}, Ph.D. thesis, University of Oregon, 2015.

\bibitem{gardella_conjugacy_2016}
Eusebio Gardella and Martino Lupini, \emph{Conjugacy and cocycle conjugacy of
  automorphisms of {$\mathcal{O}_2$} are not {Borel}}, M{\"{u}}nster Journal of
  Mathematics \textbf{9} (2016), no.~1, 93--118.

\bibitem{hirshberg_rokhlin_2007}
Ilan Hirshberg and Wilhelm Winter, \emph{Rokhlin actions and self-absorbing
  {C}*-algebras}, Pacific Journal of Mathematics \textbf{233} (2007), no.~1,
  125--143.

\bibitem{izumi_finite_2004-1}
Masaki Izumi, \emph{Finite group actions on {C}*-algebras with the {Rohlin}
  property---{II}}, Advances in Mathematics \textbf{184} (2004), no.~1,
  119--160.

\bibitem{izumi_finite_2004}
\bysame, \emph{Finite group actions on {C}*-algebras with the {Rohlin}
  property, {I}}, Duke Mathematical Journal \textbf{122} (2004), no.~2,
  233--280.

\bibitem{jiang_simple_1999}
Xinhui Jiang and Hongbing Su, \emph{On a {simple} {unital} {projectionless}
  {C}*-{algebra}}, American Journal of Mathematics \textbf{121} (1999), no.~2,
  359--413.

\bibitem{jolissaint_property_2005}
Paul Jolissaint, \emph{On property ({T}) for pairs of topological groups},
  L'Enseignement Math{\'{e}}matique \textbf{51} (2005), no.~1-2, 31--45.
  \MR{2154620}

\bibitem{jones_actions_1980}
Vaughan F.~R. Jones, \emph{Actions of finite groups on the hyperfinite type
  {II$_1$} factor}, Memoirs of the American Mathematical Society \textbf{28}
  (1980), no.~237, v+70. \MR{587749}

\bibitem{jones_converse_1983}
\bysame, \emph{A converse to {Ocneanu}'s theorem}, Journal of Operator Theory
  \textbf{10} (1983), no.~1, 61--63. \MR{715556}

\bibitem{kallman_generalization_1969}
Robert~R. Kallman, \emph{A generalization of free action}, Duke Mathematical
  Journal \textbf{36} (1969), 781--789. \MR{0256181}

\bibitem{kechris_classical_1995}
Alexander Kechris, \emph{Classical {descriptive} {set} {theory}}, Graduate
  {Texts} in {Mathematics}, vol. 156, Springer-Verlag, New York, 1995.

\bibitem{kechris_amenable_2008}
Alexander Kechris and Todor Tsankov, \emph{Amenable actions and almost
  invariant sets}, Proceedings of the American Mathematical Society
  \textbf{136} (2008), no.~2, 687--697.

\bibitem{kerr_turbulence_2010}
David Kerr, Hanfeng Li, and Mika{\"{e}}l Pichot, \emph{Turbulence,
  representations, and trace-preserving actions}, Proceedings of the London
  Mathematical Society \textbf{100} (2010), no.~2, 459--484.

\bibitem{kishimoto_rohlin_1995}
Akitaka Kishimoto, \emph{The {Rohlin} property for automorphisms of {UHF}
  algebras}, Journal f{\"{u}}r die reine und angewandte Mathematik
  \textbf{1995} (1995), no.~465, 183--196.

\bibitem{margulis_finitely-additive_1982}
Gregory~A. Margulis, \emph{Finitely-additive invariant measures on {Euclidean}
  spaces}, Ergodic Theory and Dynamical Systems \textbf{2} (1982), no.~3-4,
  383--396 (1983).

\bibitem{matui_actions_2011}
Hiroki Matui, \emph{{$\mathbb{Z}^N$}-actions on {UHF} algebras of infinite
  type}, Journal f{\"{u}}r die reine und angewandte Mathematik \textbf{657}
  (2011), 225--244.

\bibitem{matui_stability_2012}
Hiroki Matui and Yasuhiko Sato, \emph{{$\mathcal{Z}$}-stability of crossed
  products by strongly outer actions}, Communications in Mathematical Physics
  \textbf{314} (2012), no.~1, 193--228.

\bibitem{matui_stability_2014}
\bysame, \emph{{$\mathcal{Z}$}-stability of crossed products by strongly outer
  actions {II}}, American Journal of Mathematics \textbf{136} (2014), no.~6,
  1441--1496.

\bibitem{nies_complexity_2016}
Andr{\'{e}} Nies, \emph{The complexity of isomorphism between countably based
  profinite groups}, arXiv:1604.00609 (2016).

\bibitem{ocneanu_actions_1985}
Adrian Ocneanu, \emph{Actions of discrete amenable groups on von {N}eumann
  algebras}, Lecture Notes in Mathematics, vol. 1138, Springer-Verlag, Berlin,
  1985. \MR{807949}

\bibitem{popa_computations_2006}
Sorin Popa, \emph{Some computations of 1-cohomology groups and construction of
  non-orbit-equivalent actions}, Journal of the Institute of Mathematics of
  Jussieu \textbf{5} (2006), no.~2, 309--332. \MR{2225044}

\bibitem{popa_some_2006}
\bysame, \emph{Some rigidity results for non-commutative {Bernoulli} shifts},
  Journal of Functional Analysis \textbf{230} (2006), no.~2, 273--328.

\bibitem{ribes_profinite_2010}
Luis Ribes and Pavel Zalesskii, \emph{Profinite groups}, second ed., {Series}
  of {Modern} {Surveys} in {Mathematics}, vol.~40, Springer-Verlag, Berlin,
  2010.

\bibitem{sato_rohlin_2010}
Yasuhiko Sato, \emph{The {Rohlin} property for automorphisms of the
  {Jiang}-{Su} algebra}, Journal of Functional Analysis \textbf{259} (2010),
  no.~2, 453--476.

\bibitem{szabo_strongly_2017}
G{\'{a}}bor Szab{\'{o}}, \emph{Strongly self-absorbing {C}*-dynamical systems,
  {III}}, Advances in Mathematics \textbf{316} (2017), 356--380.

\bibitem{takesaki_theory_2002}
M.~Takesaki, \emph{Theory of operator algebras. {I}}, Encyclopaedia of
  {Mathematical} {Sciences}, vol. 124, Springer-Verlag, Berlin, 2002.

\bibitem{takesaki_theory_2003}
\bysame, \emph{Theory of operator algebras. {III}}, Encyclopaedia of
  {Mathematical} {Sciences}, vol. 127, Springer-Verlag, Berlin, 2003.

\bibitem{toms_strongly_2007}
Andrew~S. Toms and Wilhelm Winter, \emph{Strongly self-absorbing
  {C}*-algebras}, Transactions of the American Mathematical Society
  \textbf{359} (2007), no.~8, 3999--4029.

\bibitem{tornquist_localized_2011}
Asger T{\"{o}}rnquist, \emph{Localized cohomology and some applications of
  {Popa}'s cocycle superrigidity theorem}, Israel Journal of Mathematics
  \textbf{181} (2011), 327--346. \MR{2773046}

\bibitem{vaes_rigidity_2007}
Stefaan Vaes, \emph{Rigidity results for {Bernoulli} actions and their von
  {Neumann} algebras (after {Sorin} {Popa})}, Ast{\'{e}}risque (2007), no.~311,
  237--294, S{\'{e}}minaire Bourbaki. Vol. 2005/2006. \MR{2359046}

\bibitem{winter_localizing_2014}
Wilhelm Winter, \emph{Localizing the {E}lliott conjecture at strongly
  self-absorbing {C}*-algebras}, Journal f{\"{u}}r die Reine und Angewandte
  Mathematik \textbf{692} (2014), 193--231.

\end{thebibliography}
\end{document}